\def\la{\lambda}
\newtheorem{corollary}{Corollary}[section]
\newtheorem{lemma}[corollary]{Lemma}
\newtheorem{proposition}[corollary]{Proposition}
\newtheorem{remark}[corollary]{Remark}
\newtheorem{theorem}[corollary]{Theorem}
\newcommand{\mylabel}[1]{\label{#1}
            \ifx\undefined\stillediting
            \else \fbox{$#1$}\fi }
\newcommand{\BE}{\begin{equation}}
\newcommand{\EEQ}{\end{equation}}
\newcommand{\rfb}[1]{\mbox{\rm
   (\ref{#1})}\ifx\undefined\stillediting\else:\fbox{$#1$}\fi}
\newfont{\Blackboard}{msbm10 scaled 1200}
\newfont{\roma}{cmr10 scaled 1200}
\def\CC{\rm \hbox{C\kern-.56em\raise.4ex
         \hbox{$\scriptscriptstyle |$}\kern+0.5 em }}
\def\r{\rho}
\def\s{\sigma}
\def\o{\omega}
\def\p{\phi}
\def\O{\Omega}
\newcommand{\mm}    {{\hbox{\hskip 0.5pt}}}
\newcommand{\bluff} {{\hbox{\raise 15pt \hbox{\mm}}}}
\def\section{\@startsection {section}{1}{\z@}{-3.5ex plus -1ex minus
    -.2ex}{2.3ex plus .2ex}{\large\bf}}
\def\be{\begin{equation}}
\def\ee{\end{equation}}
\def\ds{\displaystyle}
\renewcommand\({\left(}
\renewcommand\){\right)}
\renewcommand\[{\left[}
\renewcommand\]{\right]}
\def \R {{\mathbb{R}}}
\def \N {{\mathbb{N}}}
\def \C {{\mathbb{C}}}
\begin{document}

\thispagestyle{empty}
\title[Controllability of a heat system]{Null boundary controllability of a one-dimensional heat equation with internal point masses and variable coefficients}
\date\today
\author{Ka\"{i}s Ammari}
\address{UR Analysis and Control of PDEs, UR 13ES64, Department of Mathematics, Faculty of Sciences of Monastir, University of Monastir, Tunisia}
\email{kais.ammari@fsm.rnu.tn}

\author{Hedi Bouzidi}
\address{UR Analysis and Control of PDEs, UR 13ES64, Department of Mathematics, Faculty of Sciences of Monastir, University of Monastir, Tunisia}
\email{hedi.bouzidi@fst.utm.tn}

\begin{abstract}
In this paper, we consider a linear hybrid system which is composed of $N+1$ non-homogeneous thin rods connected by
$N$ interior-point masses with a Dirichlet boundary condition on the
left end, and Dirichlet control on the right end. Using a detailed spectral analysis
and the moment theory, we prove that this system
is null controllable at any positive time $T$. To this end, firstly, we implement the Wronskian technique to obtain the characteristic equation for the eigenvalues $(\lambda_{n})_{n\in\N^*}$ associated
with this system. Secondly, we provide that the eigenvalues $(\lambda_{n})_{n\in\N^*}$ interlace those of the $N+1$ decoupled rods with homogeneous Dirichlet boundary conditions, and satisfy the so-called Weyl's asymptotic formula. Finally, we establish sharp asymptotic estimates of the eigenvalues $(\lambda_{n})_{n\in\N^*}$. As consequence, on one hand, we prove a uniform lower bound for the spectral gap. On
another hand, we  derive the equivalence between the $\mathcal{H}$-norm of the eigenfunctions and their first derivative at the right
end. As an application of our spectral analysis, we also present new controllability result for the Schr\"{o}dinger equation with an internal point mass and Dirichlet control on the left end.
\end{abstract}

\subjclass[2010]{35K05, 93B05, 93B55, 93B60}
\keywords{Boundary control, thin rods, nonhomogeneous, point masses, Wronskians, Weyl's formula, moments.}

\maketitle

\tableofcontents


\section{Introduction and main results} \label{secintro}
\setcounter{equation}{0}
The boundary controllability of the so-called "hybrid systems"  has been extensively investigated for several decades. This was pioneered in \cite{HansenZ} by the moment method for a hybrid system composed of two vibrating strings connected by a point mass. Since then, the controllability of hybrid models for systems of Rayleigh and Euler-Bernoulli beams with interior point masses was considered in \cite{Castro97B, Castro98, Mercier2, Mercier3}, see also \cite{Hedi2019, Castro2000, Mercier1} and references therein. More recently, a variety of other hybrid models for thin rods, quantum boxes and other elastic systems involving point masses have been studied along similar lines.  In particular, see \cite{Hedi2018, HansenM1} for a heat equation with internal point masses, \cite{Ammari2015, Avdonin2020, Hansen} for a Schr\"odinger equation with internal point masses, and \cite{Ammari2016, Ammari2017, Ammari2010, Ammari2015bis, Ammari2012, Ammari2001, Ammari2000, Avdonin2015, Avdonin2016, Amara2019, Castro97} for networks of strings with attached masses.

\medskip

In this paper, we study the boundary null controllability of the
temperature of a linear hybrid system consisting of $N+1$
non-homogeneous rods connected by $N$ point masses. Assume the $N+1$
non-homogeneous rods occupy the interval $\O=(0, L),~L>0,$ of
the $x$-axis and they are connected by $N$ masses $M_j> 0$ at the
points $\ell_j, j = 1, . . .,N,$ where $0 = \ell_0 < \ell_1 <... < \ell_N <
\ell_{N+1}=L$. We partition the domain $\O$ as follows:
$$\O:=\bigcup_{j=0}^{N}\Big\{\O_j\cup\{\ell_{j+1}\}\cup\O_{j+1}\Big\},
~\O_j=(\ell_j,\ell_{j+1}), ~j = 0, . . .,N.$$ By means of the scalar
functions
\begin{eqnarray*} &&u_j(t,x),~t>0,~x\in \O_j, j = 0, . . .,N,\\
&&z_j(t),~t>0,~ j = 1, . . .,N, \end{eqnarray*} we describe the temperature of
the rods $\O_j,$ and the temperature of the points
masses $\ell_j$, respectively. The linear equation
modeling heat flow of such a system is as follows:
\begin{equation}\label{eqs:1}\left\{\begin{array}{ll}
\(\rho_{j}(x)\partial_t u_j-\partial_x\(\sigma_{j}(x)\partial_x u_j\)+q_{j}(x)u_j\)(t,x)=0,& t>0,~ x\in {\Omega}_j,~j = 0,...,N, \\
u_{j-1}(t, \ell_j)=z_j(t)=u_{j}(t, \ell_j),&t>0,~j=1,...,N, \\
\(\sigma_{j}(\ell_j)\partial_x u_j-\sigma_{j-1}(\ell_j)\partial_x u_{j-1}\)(t,\ell_j)= M_j\partial_t z_{j}(t),&t>0,~j=1,...,N,  \\
u_{0}(t,\ell_{0})=u_{0}(t,0)=0, & t>0, \end{array}\right.\end{equation} with the control \begin{equation}
u_{N}(t,\ell_{N+1})=u_{N}(t,L)=h(t),~~t>0\label{eqs:2}\end{equation} and the
initial conditions at $t=0$ \begin{eqnarray}\label{eqs:3} \left\{
\begin{array}{ll}
  u_j(0,x)=u^{0}_j,~x\in\O_j,&j=0,...,N, \\
  z_j(0)=z^{0}_j,&j = 1, ...,N.
\end{array}
 \right.
\end{eqnarray} In System \eqref{eqs:1}, for each $j=0,...,N,$
the coefficients $\r_{j}(x)$ and $\s_{j}(x),$
represent respectively the density and thermal conductivity of the
rods. The potentials are denoted by the functions $q_{j}(x)$,
$j=0,...,N$. Throughout this paper, we assume that the coefficients
\begin{equation}\label{eqs:4}
\r_{j},~\s_{j} \in H^{2}(\O_j),~q_{j}\in H^{1}(\O_j),
~~j=0,...,N,
\end{equation}
and there exist constants $\r,~\s>0$, such that \begin{equation}
\r_j(x)\geq\r,~~\s_j(x)\geq\s,~~ q_j(x)\geq0,~~x
\in\overline{\O}_j,~j=0,...,N. \label{eqs:5}\end{equation}
To state our main null controllability result for system
\eqref{eqs:1}-\eqref{eqs:3}, we need some definitions and notations.
We denote by $\underline{u}:=\(u_j\)_{j=0}^N=(u_0,...,u_N)$ the
functions on $\O$ taking their values in $\C$ and let $u_j(x)$ be
the restriction of $\underline{u}$ to $\O_j$, $j=0,...,N$.
Let us define the following Hilbert space \begin{equation}
\mathcal H =\prod_{j=0}^{N} L^{2}_{\r_j}(\O_j)\times\R^{N},
\nonumber\end{equation}
which is endowed with the Hilbert structure \begin{eqnarray}
\Big\langle\left(\underline{u},\dot{\underline h} \right)^\top ,\left(
                                    \underline{v}, \dot{\underline k} \right)^\top\Big\rangle_{\mathcal{H}}:=\sum_{j=0}^{N}\int_{\O_j}u_jv_j\r_j(x)dx+\sum_{j=1}^{N}M_{j}h_jk_j,\label{eqs:6}
\end{eqnarray} where $\dot{\underline h}=\(h_j\)_{j=1}^N, ~\dot{\underline k}=\(k_j\)_{j=1}^N\in\R^{N },$ and $^\top$ denotes transposition. Hereafter, we use the notation $f(x)g(x):=fg(x).$ Our first main result is stated as
follows:
\begin{theorem}\label{MAIN}
Assume that the coefficients $\r_{j}(x)$, $\s_{j}(x)$ and
$q_{j}(x)$ satisfy \eqref{eqs:4} and \eqref{eqs:5}. Let $
T > 0$, then for any initial data $U^0:=\(\(u^{0}_j\)_{j=0}^N,\(z^{0}_j\)_{j=1}^{N}
\)^{\top}\in
\mathcal{H}$ there exists a control $h \in H^{1}(0, T),$ given explicitly by the expression $(\ref{eqref4s12})$, such
that the solution $U:=\(\(u_j\)_{j=0}^N,\(z_j\)_{j=1}^{N}\)^{\top}$ of
the control system \eqref{eqs:1}-\eqref{eqs:3} satisfies
 \begin{eqnarray*} \left\{
\begin{array}{ll}
  u_j(T,x)=0,&x\in\O_j,~
j=0,...,N, \\
  z_j(T)=0,& j=1,...,N.
\end{array}
 \right.
\end{eqnarray*}
\end{theorem}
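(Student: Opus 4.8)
The plan is to prove Theorem \ref{MAIN} by the moment method, built on the spectral analysis carried out in the previous sections. By the hypotheses \eqref{eqs:4}--\eqref{eqs:5}, the spatial operator $\mathcal{A}$ associated with \eqref{eqs:1} — acting on $\mathcal{H}$, with domain encoding the Dirichlet condition at $x=0$, the (homogeneous) Dirichlet condition at $x=L$ and the transmission/mass conditions at the $\ell_j$ — is self-adjoint, coercive (hence boundedly invertible) and has compact resolvent; thus there is an orthonormal basis $(\Phi_n)_{n\in\N^*}$ of $\mathcal{H}$ with $\mathcal{A}\Phi_n=-\lambda_n\Phi_n$ and $0<\lambda_1<\lambda_2<\cdots\to+\infty$. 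Writing $\Phi_n=\bigl((\phi_{j,n})_{j=0}^N,(\phi_{j,n}(\ell_j))_{j=1}^N\bigr)$ and normalising $\|\Phi_n\|_{\mathcal{H}}=1$, I will use three facts from the earlier sections: Weyl's asymptotics $\lambda_n\asymp n^2$ (so $\sum_n\lambda_n^{-1}<\infty$), the uniform spectral gap $\lambda_{n+1}-\lambda_n\ge\gamma>0$, and the norm equivalence, which in this normalisation reads $0<c_*\le|c_n|\le c^*$ with $c_n:=\sigma_N(L)\,\partial_x\phi_{N,n}(L)$.

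\emph{Step 1: reduction to a moment problem.} For $U^0\in\mathcal{H}$ and $h\in H^1(0,T)$ the system \eqref{eqs:1}--\eqref{eqs:3} is well posed by transposition, with $U\in C([0,T];\mathcal{H})$. I would then pair the equation with the solution of the backward adjoint problem having final datum $\Phi_n$, namely $\varphi(t)=e^{-\lambda_n(T-t)}\Phi_n$: integrating by parts on each $\Omega_j$, every interior interface term cancels against the transmission conditions and every point-mass term against the conditions $\bigl(\sigma_j(\ell_j)\partial_x u_j-\sigma_{j-1}(\ell_j)\partial_x u_{j-1}\bigr)(t,\ell_j)=M_j\partial_t z_j(t)$; the endpoint $x=0$ drops out since $u_0(t,0)=0$, and only the controlled endpoint $x=L$ survives. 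This gives the identity
\begin{equation*}
\langle U(T),\Phi_n\rangle_{\mathcal{H}}=e^{-\lambda_n T}\,\langle U^0,\Phi_n\rangle_{\mathcal{H}}+c_n\int_0^T h(t)\,e^{-\lambda_n(T-t)}\,dt ,\qquad n\in\N^*,
\end{equation*}
so, setting $\widetilde h(s):=h(T-s)$, the requirement $U(T)=0$ is equivalent to the moment problem
\begin{equation*}
\int_0^T \widetilde h(s)\,e^{-\lambda_n s}\,ds=b_n:=-\,\frac{e^{-\lambda_n T}}{c_n}\,\langle U^0,\Phi_n\rangle_{\mathcal{H}},\qquad n\in\N^*.
\end{equation*}

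\emph{Step 2: solution of the moment problem.} Since $\lambda_n>0$, $\sum_n\lambda_n^{-1}<\infty$ and the eigenvalues are uniformly gapped, the classical Fattorini--Russell construction provides a family $(\theta_n)_{n\in\N^*}\subset L^2(0,T)$ biorthogonal to $(e^{-\lambda_n s})_{n\in\N^*}$, and in its refined form one may take $\|\theta_n\|_{H^1(0,T)}\le C_T\,e^{C_T\sqrt{\lambda_n}}$. I then set
\begin{equation*}
\widetilde h(s):=\sum_{n\in\N^*} b_n\,\theta_n(s),\qquad h(t):=\widetilde h(T-t),
\end{equation*}
which is the explicit formula $(\ref{eqref4s12})$. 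From $|\langle U^0,\Phi_n\rangle_{\mathcal{H}}|\le\|U^0\|_{\mathcal{H}}$ and $|c_n|\ge c_*$ we have $|b_n|\le c_*^{-1}\,e^{-\lambda_n T}\,\|U^0\|_{\mathcal{H}}$, hence
\begin{equation*}
\sum_{n\in\N^*}\|b_n\,\theta_n\|_{H^1(0,T)}\le C_T\,c_*^{-1}\,\|U^0\|_{\mathcal{H}}\sum_{n\in\N^*} e^{\,C_T\sqrt{\lambda_n}-\lambda_n T}<\infty
\end{equation*}
because $\lambda_n\to+\infty$. Therefore the series converges in $H^1(0,T)$, so $h\in H^1(0,T)$, and by biorthogonality it solves the moment problem of Step 1; by the equivalence established there, the corresponding solution of \eqref{eqs:1}--\eqref{eqs:3} satisfies $u_j(T,\cdot)=0$ on $\Omega_j$ for $j=0,\dots,N$ and $z_j(T)=0$ for $j=1,\dots,N$, which is the assertion.

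\emph{On the main difficulty.} The genuinely delicate points are: (i) carrying out the transposition identity of Step 1 cleanly in this hybrid geometry, so that the $\R^N$ point-mass components of $\Phi_n$ and the flux jumps $\sigma_j(\ell_j)\partial_x\phi_{j,n}(\ell_j)-\sigma_{j-1}(\ell_j)\partial_x\phi_{j-1,n}(\ell_j)$ are absorbed exactly and only the $x=L$ boundary term is left; and (ii) the convergence of the series defining $\widetilde h$, which relies squarely on the two spectral estimates proved earlier — the uniform lower bound $|c_n|\ge c_*>0$ from the norm equivalence, and the uniform gap, which feeds the $e^{C_T\sqrt{\lambda_n}}$ bound on $\theta_n$. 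Once the spectral results of the earlier sections are available, everything else is routine.
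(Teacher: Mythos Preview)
Your approach is essentially the same as the paper's: reduce null controllability to a moment problem via duality with the adjoint system, then solve the moment problem using a Fattorini--Russell biorthogonal family together with the spectral gap and Weyl asymptotics to prove convergence of the explicit control series \eqref{eqref4s12} in $H^1(0,T)$.

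One correction: your reading of the norm equivalence is off. Proposition~\ref{Lem1asymptotics210v4equi} gives $\|\Phi_n\|_{\mathcal H}/|\sigma_N(L)\Phi_n'(L)|\sim C/n$, so under your normalisation $\|\Phi_n\|_{\mathcal H}=1$ one has $|c_n|\sim n$, not $|c_n|\le c^*$. This does not damage your argument, since you only invoke the lower bound $|c_n|\ge c_*>0$, which indeed follows (for large $n$ from $|c_n|\sim n$; for the finitely many small $n$ from the elementary observation, made explicitly in the paper, that $\varphi_N'(L,\lambda_n)\neq 0$ because otherwise $\varphi_N(\cdot,\lambda_n)$ would vanish identically on $\Omega_N$).
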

Our approach is mainly based on a precise analysis of the eigenvalue and eigenfunction asymptotics of the corresponding second order eigenvalue problem, and the general moment theory \cite{RussellF,RussellF1}. Firstly, we implement the Wronskian technique (e.g., \cite[Chapter
1]{Freiling} and \cite[Chapter 1]{TITCHMARSH}), to obtain the characteristic equation
for the eigenvalues $(\la_{n})_{n\in\N^*}$ associated with
System \eqref{eqs:1}-\eqref{eqs:2} (see Theorem \ref{wronskian}). Secondly, we provide the following interlacing property for the eigenvalues $(\la_{n})_{n\in\N^*}$ (see Theorem \ref{interlcing}), \begin{equation} 0<\la_{1}\leq\mu^{N,D}_{1}~~\hbox{ and }~~\mu_{n}^{N,D}
\leq\la_{n+1}\leq\mu^{N,D}_{n+1},~\forall n\in\N^*,
 \label{eqs:11najem}
\end{equation} where
$\{\mu_n^{N,D}\}_{1}^{\infty}=\bigcup_{j=0}^{N}
\{\widehat\mu_n^{j,D}\}_{1}^{\infty}$ are the eigenvalues of the $N+1$
 decoupled
rods with homogeneous Dirichlet boundary conditions. Then, we establish the Weyl's type asymptotic
formula for the eigenvalues $(\la_{n})_{n\in\N^*}$ : \begin{equation}
\label{Eqs:7} \lim_{n\to\infty}\frac{\la_{n}}{n^2\pi^2}=\gamma=
{\(\sum_{j=0}^{N}\int_{\ell_j}^{\ell_{j+1}}\sqrt{\frac{\r_j(x)}{\s_j(x)}}dx\)^{-2}}.\end{equation}
Finally, using the interlacing property \eqref{eqs:11najem} and the Weyl's formula \eqref{Eqs:7}, we obtain sharp asymptotic estimates of the eigenvalues $(\la_{n})_{n\in\mathbb{N}^{*}}$. Roughly speaking, we show that the set of eigenvalues $\{\la_n\}_{n\in\N^*}$ is asymptotically splits into $N+1$ branches $\{\lambda_{n}^j\}_{n\in\N^*}$, $j=0,...,N$, in the sense that
\be \left|\sqrt{\lambda_{n+1}^j}-\sqrt{\widehat \mu_n^{j,D}}\right|\asymp \dfrac{1}{n} ~\hbox{ for } ~\widehat \mu_n^{j,D}\not=\lambda_{n+1}^j, ~j=0,...,N. \label{Eqs:7habib} \ee
As consequence, on one hand, we prove a uniform lower bound for the spectral gap (see Theorem \ref{Weyl}), namely, \be
{\la_{n+1}}-{\la_{n}}\geq2\gamma\min_{j=0,...,N-1}
\left\{\dfrac{\left({\r_{j}\s_{j}(\ell_{j+1})}\right)^{-\frac{1}{2}}}{M_{j+1}{\o_j^*}^2},
\dfrac{\left(\r_{N}\s_{N}(\ell_N)\right)^{-\frac{1}{2}}}{M_{N}{\o_N^*}^2}\right\}, \hbox{ as }n\to\infty,
\label{eqsssV4kais:31dagerz}\ee
where $\o_j^*=\int_{\Omega_j}
\sqrt{\frac{\r_{j}(t)}{\s_{j}(t)}}dt,~j=0,...,N$.  On another hand, we  derive the equivalence between the $\mathcal{H}$-norm of the eigenfunctions
$\({\Phi}_n\)_{n\in\N^*}$ and their first derivative at the right
end $x=L$ (see Proposition \ref{Lem1asymptotics210v4equi}), that is, \be  \dfrac{\left\|{\Phi}_n\right\|_{\mathcal{H}}}{\left|\s_{N}(L){\Phi}_n'(L)\right|}
\sim\sqrt{\dfrac{\o_N^*}{{2}}}\dfrac{\gamma\left(\r_{N}\s_{N}(\ell_N)\right)^{-\frac{1}{4}}}
{{n\pi}},\hbox{ as }n\to\infty.\label{eqsssV4kais:31dager}\ee
Using these results, we reduce the control
problem \eqref{eqs:1}-\eqref{eqs:2} into an equivalent moment problem which will be solved by the
general moment theory developed by Fattorini and Russell \cite{RussellF,RussellF1}. As an application of our spectral analysis, we also present new controllability result for the following Schr\"{o}dinger equation with an internal point mass:\be \label{Scontrolanis} \left\{
\begin{array}{llll}
i\partial_t u_j(t,x)-\partial_{xx} u_j(t,x)=0,& t>0,~ x\in \(\ell_j,\ell_{j+1}\),~j = 0,1, \\
u_{0}(t, \ell_1)=z(t)=u_{1}(t, \ell_1),&t>0, \\
\(\partial_x u_1-\partial_x u_{0}\)(t,\ell_1)= i\partial_t z(t),&t>0,  \\
u_{0}(t,\ell_{0})=u_{0}(t,0)=h(t), ~~u_{1}(t,\ell_2)=0& t>0,\\
u_{0}^0=u_{0}(0,x),~u_{1}^0=u_{1}(0,x),~z^0=z(0),\end{array}
 \right.\ee
where $0 = \ell_0 < \ell_1 < \ell_2=1$, ${i}^2=-1$ is the imaginary unit, $h(t)$ is the control. To this end, we assume that
\be \ell_1\not\in\left\{\frac{p}{p+1}~:~p\in\N^*\right\}.\label{brous}
\ee
We then prove that the exact controllability of \eqref{Scontrolanis} can not hold in an asymmetric control space. Namely, we enunciate the
following result:
\begin{theorem}
\label{hjAnis} Let $T>0,$ and assume that \eqref{brous} holds. Then, for every
 $\(\underline{u}^0 :=(u_0^0,u_1^0),z^0\)^{\top}\in H^{-1}(0,1) \times \mathbb{C}$, there
exists a control $h(t)\in L^{2}(0,T)$ such that the solution \\$U:=\big(u_0(t,x),u_1(t,x),z(t)\big)^{\top}$ of the
problem \eqref{Scontrolanis}
 satisfies
\begin{equation*}
u_0(T,x)=u_1(T,x)=z(T)=0.
\end{equation*}
\end{theorem}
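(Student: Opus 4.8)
The plan is to transfer to the Schr\"odinger dynamics the scheme used for Theorem~\ref{MAIN}: read off the needed spectral data from the hybrid operator of \eqref{eqs:1}--\eqref{eqs:2} specialized to the present geometry, reduce the null controllability of \eqref{Scontrolanis} to a trigonometric moment problem by transposition, and solve that moment problem by the Fattorini--Russell method \cite{RussellF,RussellF1}. The spatial operator $\mathcal A$ underlying \eqref{Scontrolanis} is the one acting on $\mathcal H=L^{2}(0,1)\times\C$ obtained from \eqref{eqs:1}--\eqref{eqs:2} by taking $N=1$, $\r_{j}\equiv\s_{j}\equiv1$, $q_{j}\equiv0$, $\ell_{0}=0$, $\ell_{2}=1$, $M_{1}=1$, and by interchanging the homogeneous Dirichlet end with the controlled end (the control now acts at $x=0$, the homogeneous Dirichlet condition sits at $x=1$). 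This $\mathcal A$ is positive, self-adjoint, with compact resolvent, so Theorems~\ref{wronskian}, \ref{interlcing}, \ref{Weyl} and Proposition~\ref{Lem1asymptotics210v4equi} apply verbatim with $x=L$ replaced by $x=0$: the eigenvalues $(\la_{n})_{n\in\N^{*}}$ of $\mathcal A$ are real, positive and simple, they are the roots of the Wronskian characteristic equation $\sin\sqrt{\la}=\sqrt{\la}\,\sin(\sqrt{\la}\,\ell_{1})\sin(\sqrt{\la}\,(1-\ell_{1}))$, they interlace $\{(k\pi/\ell_{1})^{2}\}_{k\ge1}\cup\{(k\pi/(1-\ell_{1}))^{2}\}_{k\ge1}$, they satisfy $\la_{n}\sim n^{2}\pi^{2}$, they possess a uniform spectral gap $\la_{n+1}-\la_{n}\ge\gamma_{0}>0$ for $n$ large, and the $\mathcal H$-normalized eigenfunctions $(\Phi_{n})$ obey the equivalence of Proposition~\ref{Lem1asymptotics210v4equi}, namely $\|\Phi_{n}\|_{\mathcal H}\asymp n^{-1}|\Phi_{n}'(0)|$. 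Hypothesis~\eqref{brous} is precisely what makes these sharp estimates uniform: it discards the resonant lengths $\ell_{1}=p/(p+1)$, equivalently $\ell_{1}/(1-\ell_{1})\in\N^{*}$, for which the Wronskian analysis degenerates on one of the two branches.

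Writing \eqref{Scontrolanis} as $\pd_{t}U=i\mathcal A U$, $U(0)=U^{0}$, with the scalar Dirichlet control $h$ acting at $x=0$, a transposition argument against the (reversible) adjoint evolution shows that the solution $U$ with datum $U^{0}\in H^{-1}(0,1)\times\C$ and control $h\in L^{2}(0,T)$ satisfies $U(T)=0$ if and only if
$$\int_{0}^{T}h(t)\,e^{-i\la_{n}t}\,dt\;=\;-\,\frac{\big\langle U^{0},\Phi_{n}\big\rangle}{\Phi_{n}'(0)}\;=:\;c_{n},\qquad n\in\N^{*},$$
up to a harmless phase, where the factor $\Phi_{n}'(0)$ is the boundary term produced by integrating by parts at the controlled end and $\langle\cdot,\cdot\rangle$ is the duality pairing between $H^{-1}(0,1)\times\C$ and $D(\mathcal A^{1/2})$. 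By the eigenfunction equivalence of the previous paragraph and the fact that $(\la_{n}^{-1/2}\Phi_{n})_{n}$ is, up to an equivalent norm, an orthonormal basis of $H^{-1}(0,1)\times\C$, one has $\sum_{n}|c_{n}|^{2}\asymp\sum_{n}|\langle U^{0},\Phi_{n}\rangle|^{2}/\la_{n}\asymp\|U^{0}\|_{H^{-1}(0,1)\times\C}^{2}<\infty$, so $(c_{n})\in\ell^{2}$.

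Since $\la_{n}\sim n^{2}\pi^{2}$, the gaps $\la_{n+1}-\la_{n}$ tend to $+\infty$ and only finitely many fall below any fixed level; hence, for every $T>0$, the family $(e^{-i\la_{n}t})_{n\in\N^{*}}$ admits a biorthogonal sequence $(\vartheta_{n})_{n\in\N^{*}}$ in $L^{2}(0,T)$ with $\sup_{n}\|\vartheta_{n}\|_{L^{2}(0,T)}<\infty$ --- Ingham's inequality handles the high frequencies and a finite-rank correction the remaining small gaps, in the spirit of \cite{RussellF,RussellF1}. Then $h:=\sum_{n}c_{n}\vartheta_{n}$ converges in $L^{2}(0,T)$ (because $(c_{n})\in\ell^{2}$ and $\|\vartheta_{n}\|$ is bounded), it solves every moment equation by biorthogonality, and the resulting trajectory satisfies $u_{0}(T,\cdot)=u_{1}(T,\cdot)=z(T)=0$; this is the assertion of the theorem, the explicit form of $h$ being of the type $(\ref{eqref4s12})$. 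One also reads off that a datum in the smaller space $L^{2}(0,1)\times\C$ is steered to zero by a control lying in a strictly smaller space, the asymmetry between the two ends announced before the statement.

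The whole difficulty is concentrated in the first paragraph, in transferring and making uniform the sharp spectral estimates and in isolating \eqref{brous} as the right hypothesis. Concretely one splits the roots of the Wronskian equation into the branch clustering near $(k\pi/\ell_{1})^{2}$ and the branch clustering near $(k\pi/(1-\ell_{1}))^{2}$, and on each branch one must track, uniformly in $n$, the quantities $\sin(\sqrt{\la_{n}}\,\ell_{1})$ and $\sin(\sqrt{\la_{n}}\,(1-\ell_{1}))$ --- equivalently the distances of $\sqrt{\la_{n}}\,\ell_{1}/\pi$ and $\sqrt{\la_{n}}\,(1-\ell_{1})/\pi$ to $\Z$ --- so as to control simultaneously the location of $\la_{n}$ and the size of $\Phi_{n}'(0)$ relative to $\|\Phi_{n}\|_{\mathcal H}$; keeping these distances bounded away from $0$ along full subsequences is exactly what forces $\ell_{1}/(1-\ell_{1})\notin\N^{*}$. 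Once this is in place, the reduction to the moment problem and its resolution are routine, the only residual care being the finite-rank adjustment that yields a bounded biorthogonal family for arbitrarily small $T$.
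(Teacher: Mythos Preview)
Your argument is essentially correct and reaches the same conclusion, but it is organised differently from the paper's own proof. The paper does \emph{not} run the Fattorini--Russell moment scheme for \eqref{Scontrolanis}. Instead it applies Lions' HUM: since \eqref{Scontrolanis} is time-reversible, null controllability in $H^{-1}(0,1)\times\C$ with $L^{2}$ control is equivalent to the two-sided observability inequality $\int_{0}^{T}|\partial_x\hat u_{0}(t,0)|^{2}\,dt\asymp\|\widehat U(0)\|_{H^{1}_{0}\times\C}^{2}$ for solutions of the homogeneous adjoint. The exponentials $(e^{i\la_{n}t})$ are shown to form a Riesz sequence in $L^{2}(0,T)$ for \emph{every} $T>0$ by a direct density argument (Beurling's theorem: from $\la_{n}\sim n^{2}\pi^{2}$ the upper Beurling density is $0$), rather than by Ingham plus finite-rank corrections. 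The observability estimate then follows from the eigenfunction equivalence $|\widehat\Phi_{n}'(0)|\asymp n$. Your moment formulation is the dual of this, and the spectral ingredients you list (simplicity, interlacing, Weyl asymptotics, gap, the $\|\Phi_{n}\|_{\mathcal H}\asymp n^{-1}|\Phi_{n}'(0)|$ equivalence) are exactly those the paper proves in this setting; the use of \eqref{brous} is also the same, namely to guarantee strict interlacing so that the two-branch asymptotics \eqref{eqsss:47} specialise cleanly and yield $|\sin(\sqrt{\la_{n}}\ell_{1})|\asymp 1/n$ on both branches.

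One point needs tightening. You justify the convergence of $h=\sum_{n}c_{n}\vartheta_{n}$ in $L^{2}(0,T)$ by ``$(c_{n})\in\ell^{2}$ and $\sup_{n}\|\vartheta_{n}\|<\infty$'', but that implication is false in general. What you actually need (and have) is that $(e^{-i\la_{n}t})$ is a Riesz sequence in $L^{2}(0,T)$; then its biorthogonal $(\vartheta_{n})$ is automatically a Riesz sequence as well, so $\|\sum c_{n}\vartheta_{n}\|_{L^{2}}^{2}\asymp\sum|c_{n}|^{2}$ and the series converges. State it that way. The Beurling density route used in the paper gives this Riesz property for all $T>0$ in one stroke and is slightly cleaner than the ``Ingham for the tail plus finite correction'' argument you invoke, though both are valid here since the gaps $\la_{n+1}-\la_{n}\to\infty$.
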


\medskip

Let us now describe the existing results on null boundary controllability of System \eqref{eqs:1}-\eqref{eqs:3}. When $M_j=0$ for all $j\in\{0,...,N\}$, we recover the continuity condition of $u_{j}(t,x)$ at the points $\ell_j$, $j=1,...,N,$ and the classical heat equation with variable
coefficients occupying the interval $\O$ without point masses.
In this context,  the null controllability of Problem
\eqref{eqs:1}-\eqref{eqs:3} (for $M_j=0,$ $j=0,...,N$) has been widely studied since the pioneering works of Fattorini and Russell \cite{RussellF,RussellF1}.  We refer to \cite{CoronN, Cannarsa, CaraZuazua} for related results on null controllability  of the heat equation with variable
coefficients,  also \cite{Martin} and references therein.  In the case of a single attached mass, the wellposedness of System \eqref{eqs:1}-\eqref{eqs:3} with general constant coefficients was firstly studied by Hansen and Martinez \cite{HansenM}. Later on, null controllability result at time $T > 0 $ for System
\eqref{eqs:1}-\eqref{eqs:3} with $N=1$,
$\r_{j}(x)\equiv\s_{j}(x)\equiv1$ and $q_{j}(x)\equiv0~(j=1,2)$ was proved by the same authors in \cite{HansenM1}.  The previous controllability result has been
extended  in \cite{Hedi2018} to the case of System \eqref{eqs:1}-\eqref{eqs:3} with $N=1$.  The method of
the both papers is based on a precise analysis of the eigenvalue and eigenfunction asymptotics, and the general moment theory \cite{RussellF,RussellF1}. In the case of several attached masses, under strong assumptions on the regularity of the coefficients $\r_{j},~\s_{j}$  and $ q_j$, null controllability result at any positive time $T$ for System  \eqref{eqs:1}-\eqref{eqs:3} has been recently established by Avdonin et al. \cite{Avdonin2021}. Their proof uses the transmutation method (e.g; see \cite{CaraZuazua}), which relates the null controllability of System \eqref{eqs:1}-\eqref{eqs:3} to the exact controllability of the vibrating string with attached masses \cite{Avdonin2015}.

\medskip

It is worth mentioning that the sharp asymptotic estimate \eqref{Eqs:7habib} was proved in the seminal paper by Hansen and Zuazua \cite{HansenZ} in the case of the string equation with interior point masses and constant physical parameters. Let us also underline the reference \cite{HansenM1}, where the authors prove the uniform lower bound \eqref{eqsssV4kais:31dagerz} of the spectral gap associated with System
\eqref{eqs:1}-\eqref{eqs:3} in the case $N=1$,
$\r_{j}(x)\equiv\s_{j}(x)\equiv1$ and $q_{j}(x)\equiv0~(j=1,2)$. Recently, Avdonin and
Edwards \cite{Avdonin2015} were able to give partial answer concerns the asymptotic behavior of the eigenvalues $(\la_n)_{n\in\N^*}$ of the spectral problem associated with \eqref{eqs:1}-\eqref{eqs:2} for $\r_{j}(x)\equiv\s_{j}(x)\equiv1$. They established that the eigenvalues $(\la_n)_{n\in\N^*}$  satisfy the asymptotes $\left|\sqrt{\la_n}-\frac{n\pi}{\ell_j}\right|=O(\frac{1}{n})$, $j=0,...,N$. This result has been proved as a consequence of the mini-max argument by applying the Rouch\'{e}'s theorem to the spectral problem associated to \eqref{eqs:1}-\eqref{eqs:2} for $\r_{j}(x)\equiv\s_{j}(x)\equiv1$ and $q_{j}(x)\equiv0$. The exact boundary controllability of the Schr\"{o}dinger model \eqref{Scontrolanis} with $\ell_1=\dfrac{1}{2},$ was studied by Hansen in \cite{Hansen}. In that paper, the author consider System \eqref{Scontrolanis} with a Dirichlet boundary condition on the
left end $\ell_0=0$, and either Dirichlet or Neumann boundary control on the right end $\ell_2=1$. In the case of Dirichlet
control,  the author proves that the exact controllability space is $H^{-1}(0,1)\times \mathbb{C}$. While, in the case of Neumann control, the exact controllability space is asymmetric with respect to the
point mass in the sense that the regularity is one degree higher
on the side of the point mass opposite the control.
Later on, Avdonin and
Edwards \cite{Avdonin2020} studied the Dirichlet boundary controllability of the Schr{\"{o}}dinger equation with internal point masses and various homogeneous boundary conditions at one end. Somewhat surprisingly, one of their main results is that System \eqref{Scontrolanis} is exactly  controllable in $H^{-1}\(0,1\)\times \mathbb{C}$ if and only if
Condition \eqref{brous} is not satisfied. Their proof uses a diophantine approximation argument. As consequence, if \eqref{brous} is fulfilled, the exact controllability space is asymmetric in the sense that the regularity is $H^{-1}\(0,\ell_1\)$ on the left side of the point mass
and $H^{-2}\(\ell_1,1\)$ to the right of the point mass. As we will see in subsection \ref{sec4-2},  the exact controllability space does not depend on the diophantine approximation
of $\ell_1$. In forthcoming paper \cite{Ammari20021}, we consider  the exact controllability of the Schr{\"{o}}dinger equation with internal point masses and variable coefficients. In that paper, we assume a Dirichlet boundary condition at one end, and Neumann boundary control on the other end. We prove that this system is  exact controllable in asymmetric spaces whose the regularity to the right of each mass exceeds the regularity to the left by one Sobolev order from the controlled end.

This paper is organized as follows:
In Section \ref{Sec2}, we establish some results which will be
used along this work. In subsection \ref{sec1-1}, we show that the eigenvalues $\(\la_n\)_{n\in\N^*}$ associated with System \eqref{eqs:1}-\eqref{eqs:3} are simple, and we characterize the corresponding eigenfunctions. In subsection \ref{Sec4-1b}, we investigate the well-posedness of the heat model \eqref{eqs:1}-\eqref{eqs:3}. In Section \ref{Sect3}, we investigate the main properties of all the eigenvalues $(\la_{n})_{n\in\mathbb{N}^{*}}$: First,
in Subsection \ref{Sec3-1}, we establish the characteristic
equation for the eigenvalues  $(\la_{n})_{n\in\mathbb{N}^{*}}$. Subsection \ref{Sec3-2}, is devoted to the interlacing property  \eqref{eqs:11najem}, and the Weyl's formula \eqref{Eqs:7}. In subsection \ref{Sec3-3}, we obtain sharp asymptotic estimates of the eigenvalues $(\la_{n})_{n\in\mathbb{N}^{*}}$. The gap condition \eqref{eqsssV4kais:31dagerz}, and the equivalence \eqref{eqsssV4kais:31dager} are concluded
as a consequence. Finally,
in Section \ref{Sec4}, we prove our main results, namely the null controllability of System
\eqref{eqs:1}-\eqref{eqs:3}, and then, the exact controllability of the Schr\"{o}dinger model \eqref{Scontrolanis}.

\section{Characterization of the eigenelements and Well-posedness} \label{Sec2}
\setcounter{equation}{0}
\subsection{Characterization of the eigenelements}\label{sec1-1} In this subsection, we establish some spectral results which will be
used along this work. First, we prove the existence and uniqueness
of solutions for the initial value problems associated with spectral problem:
 \begin{eqnarray}\(\mathcal{P}_N\) \label{eqs:8}\left\{\begin{array}{ll}
 -(\s_{j}(x)\p_j')'+q_{j}(x)\p_j=\la \r_{j}(x)\p_j, ~~x\in \O_j,&j=0,...,N, \\
\p_{j-1}(\ell_{j})=\p_{j}(\ell_{j}),&j=1,...,N, \\
\s_{j-1}\p_{j-1}'(\ell_{j})-\s_{j}\p_{j}'(\ell_{j})=M_{j}\la \p_{j-1}(\ell_{j}),&j=1,...,N,\\
\p_1(\ell_1)=\p_1(0)=0,~ \p_{N}(\ell_{N+1})=\p_N(L)=0.
 \end{array}\right.\end{eqnarray} Then, we study the asymptotic properties of these solutions.  As consequence, we show that the eigenvalues $\(\la_n\)_{n\in\N^*}$ of Problem $\(\mathcal{P}_N\)$
\eqref{eqs:8} are simple, and we characterize the associated eigenfunctions.
To this end, let us introduce the following Hilbert space \begin{equation*}
\mathcal{V}=\Bigg\{\underline{u}:=\(u_j\)_{j=0}^N\in\prod_{j=0}^{N} H^{1}(\O_j):\begin{cases} u_0(0)=u_N(L)=0,\\
u_{j-1}(\ell_{j})=u_{j}(\ell_{j}),~ j=1,...,N,
\end{cases}\Bigg\}\Bigg\},
\end{equation*} which endowed with the Hilbert structure
\begin{equation*}\langle\underline{u},\underline{v}\rangle_{\mathcal V}=\sum_{j=0}^{N}\int_{\O_j}u_j'v_j'\r_j(x)dx,~~ \underline{v}=\(v_j\)_{j=0}^N.\end{equation*} We consider the following closed subspace of
$\mathcal{V} \times \mathbb{R}^{N}$, \begin{equation*}
\mathcal{W}=\Bigg\{(\underline{u},\dot{\underline{z}})^\top\in
\mathcal{V}
\times \mathbb{R}^{N}: \begin{cases} \dot{\underline z}:=\(z_j\)_{j=1}^N=(z_1,...,z_N),\\
z_{j-1}=u_{j-1}(\ell_{j})=u_{j}(\ell_{j}),~j=1,...,N
\end{cases}\Bigg\}\Bigg\}, \end{equation*}
which is densely and continuously embedded in the space
$\mathcal{H}$. In the sequel we introduce the operator $\mathcal{A}$
defined in $\mathcal{H}$ by setting
\begin{equation}\label{eqss:1}
\mathcal{A}u =\Bigg(
\Big(\dfrac{1}{\r_j(x)}\(-(\s_{j}(x)u_j')'+q_{j}(x)u_j\)\Big)_{j=0}^N,
\Big(\dfrac{1}{M_{j}}\(\s_{j-1}u_{j-1}'(\ell_{j})-\s_{j}u_{j}'(\ell_{j})\)\Big)_{j=1}^N
\Bigg)^\top,
\end{equation}
where $u=(\underline{u},\dot{\underline{z}})^\top$ on the domain $$
{\mathcal D}(\mathcal{A})=\big\{(\underline{u},\dot{\underline{z}})^\top\in
\mathcal{W} :\underline{u}=\(u_j\)_{j=0}^N,~ u_j\in
H^{2}(\O_j),~j=1,...,N\big\},
$$
which is dense in $\mathcal{H}$. Obviously, the spectral problem
$\(\mathcal{P}_N\)$ \eqref{eqs:8} is equivalent to the following
problem
$$\mathcal{A}\Phi=\la\Phi,~~\Phi:=
\(\big({\p}_j(x,\la)\big)_{j=0}^N,~
\big({\p}_j(\ell_j,\la)\big)_{j=1}^N\)^\top\in
{\mathcal D}(\mathcal{A}),$$ i.e., the eigenvalues $\(\la_n\)_{n\in\N^*}$, of the
operator $\mathcal{A}$ and Problem $\(\mathcal{P}_N\)$ \eqref{eqs:8}
coincide together with their multiplicities. Moreover, there is a
one-to-one correspondence between the eigenfunctions, \begin{equation}
\Phi_{n}:=\(\big({\p}_j(x,\la_n)\big)_{j=0}^N,~
\big({\p}_j(\ell_j,\la_n)\big)_{j=1}^N\)^\top\xrightleftharpoons{\hspace{1cm}}
\underline{\p}_{n}:=\big({\p}_j(x,\la_n)\big)_{j=0}^N,~n\in\N^*.\label{eqss:2}\end{equation}
\begin{lemma}
\label{operator} The linear operator $\mathcal{A}$ is positive and
self-adjoint such that $\mathcal{A}^{-1}$ is compact.
\end{lemma}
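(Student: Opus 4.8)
The plan is to identify $\mathcal{A}$ with the self-adjoint operator canonically associated with a symmetric, bounded and coercive bilinear form on $\mathcal{W}$, and then to derive the compactness of $\mathcal{A}^{-1}$ from a Rellich-type compact embedding of $\mathcal{W}$ into $\mathcal{H}$.

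First I would introduce the bilinear form
\begin{equation}
a(u,v):=\sum_{j=0}^{N}\int_{\O_j}\big(\s_{j}(x)\,u_j'v_j'+q_{j}(x)\,u_jv_j\big)\,dx,\qquad u=(\underline{u},\dot{\underline{z}})^{\top},\ v=(\underline{v},\dot{\underline{w}})^{\top}\in\mathcal{W},
\end{equation}
which is bounded on $\mathcal{W}$ by \eqref{eqs:4}. A Poincar\'e inequality on $\O$ --- available since $u_0(0)=0$ and $\underline{u}$ is continuous across the nodes $\ell_j$ --- together with the lower bounds \eqref{eqs:5} and the fact that the $\R^N$-coordinates of $u$ are the nodal values $u_{j-1}(\ell_j)=u_j(\ell_j)$, controlled by $\sum_{j}\norm{u_j}{H^{1}(\O_j)}$, yields a coercivity estimate $a(u,u)\geq c\,\norm{u}{\mathcal{W}}^{2}\geq c'\,\norm{u}{\mathcal{H}}^{2}$ for some constants $c,c'>0$; in particular $a$ is positive definite on $\mathcal{W}$.

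Next I would verify that $\mathcal{A}$ is exactly the operator generated by $a$. For $u\in\mathcal{D}(\mathcal{A})$ and $v\in\mathcal{W}$, integrating by parts on each $\O_j$ in $\langle\mathcal{A}u,v\rangle_{\mathcal{H}}$ one sees that the weights $\r_j$ cancel against the $L^{2}_{\r_j}$-inner products, the boundary terms at $x=0$ and $x=L$ vanish because $v_0(0)=v_N(L)=0$, and at each interior node $\ell_j$ the surviving jump $\big(\s_{j-1}u_{j-1}'-\s_{j}u_j'\big)(\ell_j)$, multiplied by the common nodal value of $v$ there (the $j$-th $\R^N$-coordinate $w_j$ of $v$), is exactly cancelled by the contribution of the $\R^N$-component $\frac{1}{M_j}\big(\s_{j-1}u_{j-1}'-\s_{j}u_j'\big)(\ell_j)$ of $\mathcal{A}u$ paired with $M_jw_j$ in \eqref{eqs:6}. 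One is thus left with $\langle\mathcal{A}u,v\rangle_{\mathcal{H}}=a(u,v)$ for all $u\in\mathcal{D}(\mathcal{A})$, $v\in\mathcal{W}$; choosing $v=u$ and using the coercivity above shows that $\mathcal{A}$ is symmetric and positive.

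Self-adjointness would then follow from the first representation theorem for forms: since $\mathcal{W}$ is densely and continuously embedded in $\mathcal{H}$ and $a$ is symmetric, bounded and coercive on $\mathcal{W}$, the operator $\mathcal{B}$ defined by $\mathcal{D}(\mathcal{B})=\{u\in\mathcal{W}:\exists\,f\in\mathcal{H}\ \text{with}\ a(u,v)=\langle f,v\rangle_{\mathcal{H}}\ \text{for all}\ v\in\mathcal{W}\}$ and $\mathcal{B}u=f$ is self-adjoint and positive, with bounded inverse $\mathcal{B}^{-1}:\mathcal{H}\to\mathcal{W}$ (Lax--Milgram). The previous paragraph gives $\mathcal{A}\subseteq\mathcal{B}$; for the reverse inclusion, given $u\in\mathcal{D}(\mathcal{B})$ with $a(u,v)=\langle f,v\rangle_{\mathcal{H}}$, testing against $v$ supported in a single $\O_j$ and vanishing near its endpoints yields $-(\s_{j}u_j')'+q_{j}u_j=\r_j f_j$ in $\mathcal{D}'(\O_j)$, hence $u_j\in H^{2}(\O_j)$ by elliptic regularity and \eqref{eqs:4}; integrating by parts back and then letting $v$ range over all of $\mathcal{W}$ recovers the transmission conditions defining $\mathcal{D}(\mathcal{A})$, so $u\in\mathcal{D}(\mathcal{A})$ and $\mathcal{A}u=f$. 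Thus $\mathcal{A}=\mathcal{B}$ is self-adjoint and positive. Finally, $\mathcal{A}^{-1}=\mathcal{B}^{-1}$ maps $\mathcal{H}$ boundedly into $\mathcal{W}\hookrightarrow\prod_{j=0}^{N}H^{1}(\O_j)\times\R^N$; since each $\O_j$ is a bounded interval the embedding $H^{1}(\O_j)\hookrightarrow L^{2}_{\r_j}(\O_j)$ is compact and $\R^N$ is finite dimensional, so $\mathcal{W}\hookrightarrow\mathcal{H}$ is compact and therefore $\mathcal{A}^{-1}$ is compact on $\mathcal{H}$. I expect the delicate point to be the integration-by-parts bookkeeping at the nodes in the third step --- checking that the point-mass components of $\mathcal{A}$ precisely absorb the nodal boundary terms --- together with the elliptic-regularity argument promoting a variational solution to an element of $\mathcal{D}(\mathcal{A})$ satisfying the correct transmission conditions.
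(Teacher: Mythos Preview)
Your proposal is correct and, at its core, follows the same strategy as the paper: integration by parts yields the positivity/symmetry identity $\langle\mathcal{A}u,u\rangle_{\mathcal{H}}=\sum_{j}\int_{\O_j}(\s_j|u_j'|^2+q_j|u_j|^2)\,dx$, and compactness of $\mathcal{A}^{-1}$ comes from the compact embedding $\mathcal{W}\hookrightarrow\mathcal{H}$. The only difference is in how self-adjointness is argued: the paper simply asserts $\mathrm{Ran}(\mathcal{A}-iI)=\mathcal{H}$ without detail, whereas you spell this out via the sesquilinear-form representation theorem together with Lax--Milgram and elliptic regularity to identify $\mathcal{D}(\mathcal{A})$ with the form domain --- this is a more explicit packaging of exactly the surjectivity that the paper takes for granted, and it has the advantage of making transparent where the transmission conditions and the $H^2$-regularity on each $\O_j$ enter.
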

\begin{proof}
Let $u=(\underline{u},\dot{\underline{z}})^\top\in\mathcal
D(\mathcal{A})$, then by integration by parts, we have \begin{eqnarray*} \langle
\mathcal{A}u,
u\rangle_{\mathcal{H}}&=&\sum_{j=0}^N\int_{\O_j}\(-(\s_{j}(x)u_j')'+q_{j}(x)u_j\)u_j dx+\sum_{j=1}^{N}\(\s_{j-1}u_{j-1}'(\ell_{j})-\s_{j}u_{j}'(\ell_{j})\)u_j(\ell_j), \\
&=&\sum_{j=0}^N\int_{\O_j}\(\s_j(x)|u_j'|^{2}+q_j(x)|u_j|^{2}\)dx.\end{eqnarray*}
Since $\s_j>0$ and $q_j\geq0,$ then $\langle
\mathcal{A}u, u\rangle_{\mathcal{H}}>0$ for $u\not\equiv0$, and
hence, the linear operator $\mathcal{A}$ is positive. Furthermore,
it is easy to show that $Ran(\mathcal{A}-iId)=\mathcal{H}$, and this
implies that $\mathcal{A}$ is selfadjoint. Since the space
$\mathcal{W}$ is continuously and compactly embedded in the space
$\mathcal{H}$, then $\mathcal{A}^{-1}$ is compact in $\mathcal{H}$.
The proof is complete.
\end{proof}

Let us consider the problems determined by the equation
\begin{equation}-(\s_{j}(x)\p_j')'+q_{j}(x)\p_j=
 \la \r_{j}(x)\p_j,~~x\in \overline{\O}_j,~~j=0,...,N,\label{eqss:3}
\end{equation}
and the initial conditions
\begin{align}
 &\p_0(0)=\s_{0}\p_0'(0)-1=0,\label{eqss:4}\\
 &\p_{j}(\ell_{j})=\p_{j-1}(\ell_{j}),&j=1,...,N, \label{eqss:5}\\
&\s_{j}\p_{j}'(\ell_{j})=\s_{j-1}\p_{j-1}'(\ell_{j})-M_{j}\la
\p_{j-1}(\ell_{j}),& j=1,...,N,\label{eqss:6}
\end{align}
and
\begin{align}
 &\p_{N}(L)=\s_{N}\p_{N}'(L)+1=0,\label{eqss:8}\\
&\p_{j-1}(\ell_{j})=\p_{j}(\ell_{j}),&j=1,...,N, \label{eqss:9}\\
&\s_{j-1}\p_{j-1}'(\ell_{j})=\s_{j}\p_{j}'(\ell_{j})+M_{j}\la \p_{j}(\ell_{j}),
&j=1,...,N,\label{eqss:10}
\end{align} respectively.
For each $j=0,...,N$, let $\widehat\varphi_j(x,\la)$ and
 $\widehat\psi_j(x,\la)$ are the unique solutions, up to a multiplicative constant,
of the subproblems determined by Equation \eqref{eqss:3} in
$\overline{\O}_j$, and the initial conditions \begin{equation}
\widehat\varphi_j(\ell_j)-1=\widehat\varphi_j'(\ell_j)=0,~j=0,...,N,
\label{eqss:11} \end{equation} and \begin{equation}
\widehat\psi_j(\ell_j)=\s_j\widehat\psi_j'(\ell_j)-1=0,~j=0,...,N, \label{eqss:12}
\end{equation}
respectively. It is known (e.g., \cite[Chapter 1]{LEVITAN} and \cite[Chapter
1]{TITCHMARSH}), that
$\widehat\varphi_j(x,\la)$ and $\widehat\varphi_j'(x,\la)$
(resp. $\widehat\psi_j(x,\la)$ and $\widehat\psi_j'(x,\la)$)
 are entire functions of $\la$ for each
fixed $x\in\overline{\O}_j,~j=0,...,N$.
\begin{lemma}\label{subunquss} Let us fix $j\in\{0,...,N\}$, and let $f_j(\la)$ and $g_j(\la)$
be two analytic functions. Then, the
subproblem determined by Equation \eqref{eqss:3} in
$\overline{\O}_j$, and the initial conditions \begin{eqnarray}
\p_j(\ell_j)=f_j(\la),~\s_j\p'_j(\ell_j)=g_j(\la)~~~~ (\hbox{or}~
\p_j(\ell_{j+1})=f_j(\la),~\s_j\p'_j(\ell_{j+1})=g_j(\la))\label{eqss:13}\end{eqnarray} has a
unique solution $\p_j(x,\la)$, up to a multiplicative
constant, \begin{equation}
{\p}_j(x,\la)=f_j(\la)\widehat\varphi_j(x,\la)+g_j(\la)\widehat\psi_j(x,\la),~x\in\overline{\O}_j,~
j=0,...,N.\label{eqss:14}\end{equation}
 Furthermore, $\p_j(x,\la)$ and $\p_j'(x,\la)$ are entire
functions of $\la$ for each fixed $x\in\overline{\O}_j$,
$j=0,...,N$.
\end{lemma}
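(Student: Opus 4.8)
The plan is to treat Lemma \ref{subunquss} as a routine statement about linear second--order ordinary differential equations, using only linearity together with the two fundamental solutions $\widehat\varphi_j$ and $\widehat\psi_j$ that have just been introduced. First I would put the equation \eqref{eqss:3} in normal form on $\overline\O_j$. By the one--dimensional Sobolev embeddings $H^{2}(\O_j)\hookrightarrow C^{1}(\overline\O_j)$ and $H^{1}(\O_j)\hookrightarrow C^{0}(\overline\O_j)$, together with $\s_j\ge\s>0$ from \eqref{eqs:4}--\eqref{eqs:5}, division by $\s_j$ rewrites \eqref{eqss:3} as $\p_j''=-\tfrac{\s_j'}{\s_j}\,\p_j'+\tfrac{q_j-\la\r_j}{\s_j}\,\p_j$ with coefficients continuous on $\overline\O_j$. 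The classical Picard--Lindel\"of theorem then yields, for each fixed $\la$, a unique solution of \eqref{eqss:3} on $\overline\O_j$ with any prescribed Cauchy data at $\ell_j$ (or at $\ell_{j+1}$); in particular the data \eqref{eqss:13} determines $\p_j(\cdot,\la)$ uniquely, once the normalizations of $\widehat\varphi_j,\widehat\psi_j$ are fixed there is no residual multiplicative freedom.

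Next I would observe that the normalizations \eqref{eqss:11}--\eqref{eqss:12} say precisely $\widehat\varphi_j(\ell_j)=1$, $\widehat\varphi_j'(\ell_j)=0$ and $\widehat\psi_j(\ell_j)=0$, $\s_j(\ell_j)\widehat\psi_j'(\ell_j)=1$. Hence the Wronskian of $\{\widehat\varphi_j,\widehat\psi_j\}$ at $\ell_j$ equals $1/\s_j(\ell_j)\ne0$, so this pair is a fundamental system for \eqref{eqss:3}. Because $f_j(\la)$ and $g_j(\la)$ do not depend on $x$, the combination $f_j(\la)\widehat\varphi_j(x,\la)+g_j(\la)\widehat\psi_j(x,\la)$ again solves \eqref{eqss:3}; evaluating it and (after multiplication by $\s_j$) its $x$--derivative at $x=\ell_j$ returns exactly $f_j(\la)$ and $g_j(\la)$, matching \eqref{eqss:13}. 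By the uniqueness established in the previous step, this combination must be $\p_j$, which is formula \eqref{eqss:14}. The variant with data prescribed at $\ell_{j+1}$ is handled identically, replacing $\widehat\varphi_j,\widehat\psi_j$ by the solutions normalized at $\ell_{j+1}$ (equivalently, by re--expanding the $\ell_j$--normalized fundamental pair, whose Wronskian is nonvanishing everywhere on $\overline\O_j$ by Liouville's formula).

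Finally, the analyticity claim is inherited: it is recalled in the excerpt (just after \eqref{eqss:12}, following \cite{LEVITAN,TITCHMARSH}) that $\widehat\varphi_j(x,\cdot)$, $\widehat\varphi_j'(x,\cdot)$, $\widehat\psi_j(x,\cdot)$ and $\widehat\psi_j'(x,\cdot)$ are entire for each fixed $x\in\overline\O_j$. Since $f_j$ and $g_j$ are analytic (and entire in the intended applications, where they are themselves built from these entire functions evaluated at interface points), the right--hand side of \eqref{eqss:14} and its $x$--derivative are analytic (resp.\ entire) in $\la$ for each fixed $x\in\overline\O_j$.

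I do not expect a genuine obstacle in this lemma. The only points requiring a word of care are the low regularity of the coefficients $\r_j,\s_j,q_j$, which the one--dimensional Sobolev embeddings together with $\s_j\ge\s>0$ render harmless when passing to normal form, and the bookkeeping of which normalized fundamental pair ($\ell_j$-- versus $\ell_{j+1}$--based) is used when the Cauchy data are prescribed at the right endpoint.
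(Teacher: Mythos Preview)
Your proposal is correct and follows essentially the same approach as the paper: both arguments compute the Wronskian of $\widehat\varphi_j,\widehat\psi_j$ at $\ell_j$ to exhibit a fundamental system, write the general solution as a linear combination, match the coefficients to the data \eqref{eqss:13}, and read off analyticity in $\la$ from the representation \eqref{eqss:14}. Your version is somewhat more explicit about the underlying ODE theory (normal form via Sobolev embeddings and Picard--Lindel\"of) and about the $\ell_{j+1}$--variant, whereas the paper simply invokes ``standard theory of differential equations'' at the uniqueness step; substantively the proofs coincide.
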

\begin{proof} By \eqref{eqss:11}-\eqref{eqss:12}, the Wronskian $$\Delta_j(\la)=
\widehat\varphi_j\s_j\widehat\psi_j'(\ell_j)-
\widehat\varphi_j'\s_j\widehat\psi_j(\ell_j)=1\neq0,~j=0,...,N,$$
and then, $\widehat\varphi_j(x,\la)$ and $\widehat\psi_j(x,\la)$ are two
linearly independent solutions of Equation \eqref{eqss:3} in $\overline{\O}_j$. This implies
that any solution ${\p}_j(x,\la)$ of the subproblem
\eqref{eqss:3}, \eqref{eqss:13}, can be written in the form
\begin{equation*}{\p}_j(x,\la)=C_1\widehat\varphi_j(x,\la)+
C_2\widehat\psi_j(x,\la),~x\in\overline{\O}_j,~j=0,...,N,\end{equation*}
for some constants $C_j\not=0$, $j=0,1.$
Using this together with the initial conditions
\eqref{eqss:11}-\eqref{eqss:12} and \eqref{eqss:13} , we have
$$ {\p}_j(x,\la)=f_j(\la)\widehat\varphi_j(x,\la)+g_j(\la)\widehat\psi_j(x,\la),~
x\in\overline{\O}_j,~j=0,...,N,$$ is a nontrivial solution of the subproblem
  \eqref{eqss:3}, \eqref{eqss:13}. The uniqueness
of solutions follows from the linearity of the equation \eqref{eqss:3}
together with standard theory of differential equations.
Since $f_j(\la)$ and $g_j(\la)$ are analytic functions, then
from the expression \eqref{eqss:14}, $\p_j(x,\la)$ and $\p_j'(x,\la)$ are entire
functions of $\la$ for each fixed $x\in\overline{\O}_j$.
\end{proof}
\begin{lemma}\label{Unqunss}{\bf(a)} The initial value problem  \eqref{eqss:3}-\eqref{eqss:6} has a unique
solution, up to a multiplicative constant, \begin{eqnarray} \label{eqss:15}
 \underline{\varphi}_N:=\big({\varphi}_j(x,\la)\big)_{j=0}^N,
 ~x\in\overline{\O}_j,~j=0,...,N,
\end{eqnarray} where $\varphi_0(x,\la)$ and $\varphi_j(x,\la),$ $j=1,...,N,$
are the unique solutions (up to a scalar) of the initial value
subproblems determined by Equations \eqref{eqss:3}-\eqref{eqss:4} in $\overline{\O}_0,$ and Equations
\eqref{eqss:3}, \eqref{eqss:5}-\eqref{eqss:6} in $\overline{\O}_j,$
$j=1,...,N,$ respectively. Furthermore,
$\underline{\varphi}_N(x,\la)$ and $\underline{\varphi}_N'(x,\la)$ are entire
functions of $\la$ for each fixed
$x\in\overline{\O}$.\\
{\bf(b)} The initial value problem \eqref{eqss:3}, \eqref{eqss:8}-\eqref{eqss:10} has a unique
solution, up to a multiplicative constant, \begin{eqnarray} \label{eqss:16}
 {\underline{\psi}}_N:=\big({\psi}_j(x,\la)\big)_{j=0}^N,
 ~x\in\overline{\O}_j,~j=0,...,N,
\end{eqnarray} where $\psi_N(x,\la)$ and $\psi_j(x,\la),$ $j=0,...,N-1,$ are
the unique solutions (up to a scalar) of the initial value
subproblems determined by Equations \eqref{eqss:3},\eqref{eqss:8} in
${\overline{\O}}_N,$ and Equations \eqref{eqss:3},
\eqref{eqss:9}-\eqref{eqss:10} in ${\overline{\O}}_j,$
$j=0,...,N-1,$ respectively. Furthermore,
$\underline{\psi}_N(x,\la)$ and $\underline{\psi}_N'(x,\la)$ are
entire functions of $\la$ for each fixed $x\in\overline{\O}$.
\end{lemma}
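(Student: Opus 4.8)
The plan is to construct $\underline{\varphi}_N$ (and, symmetrically, $\underline{\psi}_N$) one subinterval at a time, sweeping across $\overline{\O}$ from the left end $x=0$ (resp.\ from the right end $x=L$) and using the transmission conditions \eqref{eqss:5}--\eqref{eqss:6} (resp.\ \eqref{eqss:9}--\eqref{eqss:10}) to carry the Cauchy data from $\O_{j-1}$ into $\O_{j}$ (resp.\ from $\O_{j}$ into $\O_{j-1}$). At every stage the local subproblem is exactly of the type covered by Lemma~\ref{subunquss}, so existence, uniqueness up to a scalar, the explicit representation \eqref{eqss:14}, and analyticity in $\la$ are all available immediately; the only thing that has to be checked by hand is that the Cauchy data generated by the transmission conditions are themselves entire functions of $\la$.

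For part~(a) we argue by induction on $j$. The base case is the subproblem \eqref{eqss:3}--\eqref{eqss:4} on $\overline{\O}_0$: its Cauchy data at $\ell_0=0$ are $f_0(\la)\equiv 0$ and $g_0(\la)\equiv 1$, which are trivially analytic, so Lemma~\ref{subunquss} produces a solution $\varphi_0(x,\la)$, unique up to a multiplicative constant, with $\varphi_0$ and $\varphi_0'$ entire in $\la$ for each fixed $x\in\overline{\O}_0$ (in fact $\varphi_0=\widehat\psi_0$). For the inductive step, suppose $\varphi_{j-1}$ has been built on $\overline{\O}_{j-1}$ with $\varphi_{j-1}(\ell_j,\cdot)$ and $\varphi_{j-1}'(\ell_j,\cdot)$ entire. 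By \eqref{eqss:5}--\eqref{eqss:6} the Cauchy data of $\varphi_j$ at the left endpoint $\ell_j$ of $\O_j$ are
\begin{equation*}
 \varphi_j(\ell_j)=\varphi_{j-1}(\ell_j,\la)=:f_j(\la),\qquad
 \s_j\varphi_j'(\ell_j)=\s_{j-1}\varphi_{j-1}'(\ell_j,\la)-M_j\la\,\varphi_{j-1}(\ell_j,\la)=:g_j(\la),
\end{equation*}
and $f_j,g_j$ are entire because entirety is preserved under linear combinations and multiplication by $\la$. Lemma~\ref{subunquss}, applied on $\overline{\O}_j$ with Cauchy data at $\ell_j$, then gives the unique solution, up to a scalar, $\varphi_j(x,\la)=f_j(\la)\widehat\varphi_j(x,\la)+g_j(\la)\widehat\psi_j(x,\la)$, with $\varphi_j$ and $\varphi_j'$ entire in $\la$ for fixed $x$. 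After $N$ steps the tuple $\underline{\varphi}_N=(\varphi_j)_{j=0}^N$ solves \eqref{eqss:3} on each $\overline{\O}_j$, satisfies \eqref{eqss:4} and all the transmission conditions \eqref{eqss:5}--\eqref{eqss:6}, and is entire (together with $\underline{\varphi}_N'$) componentwise for each fixed $x\in\overline{\O}$. Finally, the only freedom in the whole construction is the choice of normalization of the Cauchy data at $x=0$; since it enters every downstream formula linearly, it merely rescales $\underline{\varphi}_N$ by one global constant, which is precisely uniqueness up to a multiplicative constant.

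Part~(b) is entirely parallel, swept in the opposite direction. One starts from the subproblem \eqref{eqss:3}, \eqref{eqss:8} on $\overline{\O}_N$, whose Cauchy data at $\ell_{N+1}=L$ are the analytic functions $f_N\equiv 0$, $g_N\equiv -1$, and then for $j=N,\dots,1$ one passes from $\O_j$ to $\O_{j-1}$ using \eqref{eqss:9}--\eqref{eqss:10}: if $\psi_j$ is known on $\overline{\O}_j$ with $\psi_j,\psi_j'$ entire, one sets
\begin{equation*}
 f_{j-1}(\la):=\psi_j(\ell_j,\la),\qquad g_{j-1}(\la):=\s_j\psi_j'(\ell_j,\la)+M_j\la\,\psi_j(\ell_j,\la),
\end{equation*}
which are again entire, and applies Lemma~\ref{subunquss} on $\overline{\O}_{j-1}$ in its right-endpoint form (Cauchy data at $\ell_j=\ell_{(j-1)+1}$) to obtain $\psi_{j-1}$, unique up to a scalar and entire in $\la$ together with $\psi_{j-1}'$. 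Assembling the pieces yields $\underline{\psi}_N=(\psi_j)_{j=0}^N$ with the claimed properties, unique up to one global multiplicative constant.

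I do not anticipate a genuine difficulty: given Lemma~\ref{subunquss}, the statement is a straightforward induction on $j$, and the only points that need attention are organisational --- keeping track of which endpoint of $\O_j$ carries the Cauchy data at each stage (the left endpoint $\ell_j$ in part~(a), the right endpoint $\ell_{j+1}$ in part~(b)), getting the signs right in the mass conditions \eqref{eqss:6} and \eqref{eqss:10}, and noting that the operations occurring there (multiplication by the coefficients $\s_{j\pm1}$, $M_j$ and by the spectral parameter $\la$) preserve entirety in $\la$. The phrase ``unique up to a multiplicative constant'' must be read globally, i.e.\ once the normalization at the starting endpoint is fixed all the pieces are uniquely determined.
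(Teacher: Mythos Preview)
Your proposal is correct and follows essentially the same approach as the paper: build the solution interval-by-interval starting from the left (resp.\ right) endpoint, use the transmission conditions \eqref{eqss:5}--\eqref{eqss:6} (resp.\ \eqref{eqss:9}--\eqref{eqss:10}) to produce entire Cauchy data $f_j,g_j$ at each junction, and apply Lemma~\ref{subunquss} on every $\overline{\O}_j$; the paper writes out $j=1,2$ explicitly before stating the general iteration formula \eqref{eqss:19}, while you present it as a clean induction. The only minor difference is in the uniqueness argument: the paper proceeds by contradiction (the difference of two solutions has vanishing Cauchy data at $x=0$, hence vanishes on $\overline{\O}_0$ by ODE uniqueness, and then successively on every $\overline{\O}_j$ via the transmission conditions), whereas you argue by ``freedom counting'' that the only arbitrariness is the normalization at $x=0$ --- this is correct in spirit, but the paper's version makes explicit the additional point that \emph{any} solution of \eqref{eqss:3}--\eqref{eqss:6}, however obtained, must coincide with the constructed one.
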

\begin{proof}
For $j=0$, it is known (e.g., \cite[Chapter
1]{LEVITAN} and \cite[Chapter 1]{TITCHMARSH}), that the initial value subproblem \eqref{eqss:3}-\eqref{eqss:4} has a unique solution $\varphi_0:=\widehat\psi_0(x,\la)$, $x\in\overline{\O}_0$, up to a multiplicative
constant, such that $\widehat\psi_0(x,\la)$ and $\widehat\psi_0'(x,\la)$ are
entire functions of $\la$ for each fixed $x\in\overline{\O}_0$. For $j=1$, let $f_1(\la)=\widehat\psi_0(\ell_1,\la)$ and
$g_1(\la)=\s_{0}(\ell_{1})\widehat\psi_{0}'(\ell_{1},\la)-M_{1}\la
\widehat\psi_{0}(\ell_{1},\la)$. Then by
Lemma \ref{subunquss}, the subproblem determined by Equation \eqref{eqss:3}
in $\overline \O_1$ and the initial conditions $$\varphi_1(\ell_1,\la)=\widehat\psi_{0}(\ell_{1},\la) \hbox{ and }\s_{1}(\ell_{1})\varphi_1'(\ell_{1},\la)=\s_{0}(\ell_{1})\widehat\psi_{0}'(\ell_{1},\la)-M_{1}\la
\widehat\psi_{0}(\ell_{1},\la),$$
has a unique solution $\varphi_1(x,\la)$, up to a scalar, \begin{equation}
\varphi_1(x,\la)=\widehat\psi_{0}(\ell_{1},\la)\widehat\varphi_1(x,\la)+
\Big(\s_{0}(\ell_{1})\widehat\psi_{0}'(\ell_{1},\la)-M_{1}\la
\widehat\psi_{0}(\ell_{1},\la)\Big)\widehat\psi_1(x,\la)
,~x\in\overline{\O}_1,\label{eqss:17}\end{equation}
where $\widehat\varphi_1(x,\la)$ and $\widehat\psi_1(x,\la)$ are the
solutions of the subproblems \eqref{eqss:3}, \eqref{eqss:11} and
\eqref{eqss:3}, \eqref{eqss:12} for $j=1$, respectively. Furthermore,
$\varphi_1(x,\la)$ and $\varphi_1'(x,\la)$ are analytic functions of $\la$ for each fixed $x\in\overline{\O}_1$.
For $j=2$, let $f_2(\la)=\varphi_{1}(\ell_2,\la)$ and
$g_2(\la)=\s_{1}(\ell_{2})\varphi_{1}'(\ell_{2},\la)-M_{2}\la
\varphi_{1}(\ell_{2},\la)$. Again by Lemma \ref{subunquss}, the subproblem determined by Equation \eqref{eqss:3}
in $\overline \O_2$ and the initial conditions $\varphi_2(\ell_2,\la)= f_2(\la) \hbox{ and }\s_{2}(\ell_{2})\varphi_2'(\ell_{2},\la)=g_2(\la),$
has a unique solution $\varphi_2(x,\la)$, up to a scalar, such that
$\varphi_2(x,\la)$ and $\varphi_2'(x,\la)$ are entire
function of $\la$ for each fixed $x\in\overline{\O}_1.$ Moreover,
\begin{equation}
\varphi_2(x,\la)=\varphi_{1}(\ell_2,\la)\widehat\varphi_2(x,\la)+
\Big(\s_{1}(\ell_{2})\varphi_{1}'(\ell_{2},\la)-M_{2}\la
\varphi_{1}(\ell_{2},\la)\Big)\widehat\psi_2(x,\la),~x\in\overline{\O}_2,
\label{eqss:18}
\end{equation}
where
$\widehat\varphi_2(x,\la)$ and $\widehat\psi_2(x,\la)$ are the
solutions of the subproblems \eqref{eqss:3}-\eqref{eqss:11} and
\eqref{eqss:3}-\eqref{eqss:12} for $j=2$, respectively. Now, for each $j=3,...,N$, let $$f_j(\la)=\varphi_{j-1}(\ell_j,\la),~ g_j(\la)=\s_{j-1}(\ell_{j})\varphi_{j-1}'(\ell_{j},\la)-M_{j}\la
\varphi_{j-1}(\ell_{j},\la),$$ and iterating Lemma \ref{subunquss}. Then for each $j$, the subproblem determined by Equation \eqref{eqss:3} in $\overline \O_j$ and the initial conditions $$\varphi_j(\ell_j,\la)=\varphi_{j-1}(\ell_{j},\la) \hbox{ and }\s_{j}(\ell_{j})\varphi_j'(\ell_{j},\la)=\s_{j-1}(\ell_{j})\varphi_{j-1}'(\ell_{j},\la)-M_{j}\la
\varphi_{j-1}(\ell_{j},\la),$$
has a unique solution $\varphi_j(x,\la)$, up to a scalar, such that $\varphi_j(x,\la)$ and
 $\varphi_j'(x,\la)$ are analytic
functions of $\la$ for each fixed $x\in\overline{\O}_j.$ Consequently, we have the following iteration formula: for each $j=2,...,N$, and $x\in\overline{\O}_j$,
\begin{equation}\varphi_j(x,\la)=\varphi_{j-1}(\ell_{j},\la)\widehat\varphi_j(x,\la)+
\Big(\s_{j-1}(\ell_{j})\varphi_{j-1}'(\ell_{j},\la)-M_{j}\la
\varphi_{j-1}(\ell_{j},\la)\Big)\widehat\psi_j(x,\la),\label{eqss:19}\end{equation}
where $\widehat\varphi_j(x,\la)$ and
$\widehat\psi_j(x,\la)$ are the solutions of the initial value
subproblems \eqref{eqss:3}-\eqref{eqss:11} and \eqref{eqss:3}-\eqref{eqss:12} for $j=2,...,N$,
respectively. Therefore, the function
\begin{equation} \label{eqss:20}\underline{\varphi}_N:=\big({\varphi}_j(x,\la)\big)_{j=0}^N,
 ~x\in\overline{\O}_j,~j=0,...,N,
\end{equation}
is a nontrivial solution
of Problem  \eqref{eqss:3}-\eqref{eqss:6}. Since,
$\varphi_j(x,\la)$ and $\varphi_j'(x,\la)$ are analytic
functions of $\la$ for each fixed $x\in\overline{\O}_j,$ then by \eqref{eqss:20},
$\underline{\varphi}_N(x,\la)$ and $\underline{\varphi}_N'(x,\la)$
are also entire functions with respect to
$\la$ for each fixed
$x\in\overline{\O}$. We now prove the uniqueness of solutions. Let
$\underline{\varphi}_N^1:=\big({\varphi}_j^1(x,\la)\big)_{j=0}^N$ and
$\underline{\varphi}_N^2:=\big({\varphi}_j^2(x,\la)\big)_{j=0}^N$ are
two linearly independent solutions of Problem
\eqref{eqss:3}-\eqref{eqss:6}. Then, by the linearity
of the equations \eqref{eqss:3}-\eqref{eqss:6}, the function $$ \underline{\widehat{\varphi}}_N:=\underline{\varphi}_N^1-\underline{\varphi}_N^2=
\big({\varphi}_j^1(x,\la)-{\varphi}_j^2(x,\la)\big)_{j=0}^N,
~x\in\overline{\O}_j,~j=0,...,N,$$ is a nontrivial
 solution of the problem determined by Equations \eqref{eqss:3}, \eqref{eqss:5}-\eqref{eqss:6},
and the initial conditions,
  ${\varphi}_0(0)=\s_0{\varphi}_0'(0)=0.$ From this
  and the uniqueness theorem for the equation \eqref{eqss:3} in $\overline{\O}_0$,
   we get ${\varphi}_0^1(x,\la)-{\varphi}_0^2(x,\la)\equiv0$, and then,
 by  \eqref{eqss:5}-\eqref{eqss:6}, one has $${\varphi}_1(\ell_1)=\s_2{\varphi}_1'(\ell_1)=0.$$ Again by
 the uniqueness theorem for the equation \eqref{eqss:3} in $\overline{\O}_1$, ${\varphi}_1^1(x,\la)-{\varphi}_1^2(x,\la)
  \equiv0$.  Iterating this argument, one obtains
  $${\varphi}_j^1(x,\la)={\varphi}_j^2(x,\la),~x\in\overline{\O}_j,~j=0,...,N, $$
a contradiction. The second statement of the Lemma can
be proved in a same way.
\end{proof}

We now prove an asymptotic formula for the solution
 $\underline{\varphi}_N(x, \la)$ of Problem \eqref{eqss:3}-\eqref{eqss:6}. Hereafter, we use these notations
\begin{equation}
\xi_j(x)=\left({\r_{j}(x)\s_{j}(x)}\right)^{-\frac{1}{4}},~
 {\xi_j^*}=\xi_j(\ell_j)\xi_j(\ell_{j+1}),~\Upsilon_j=\prod_{k=0}^{j}
\xi^*_{k},\label{eqss:21}\end{equation}
and \begin{equation} \o_j(x)=\int_{\ell_j}^{x}
\sqrt{\frac{\r_{j}(t)}{\s_{j}(t)}}dt, ~\o_j^*=\o_j(\ell_{j+1}),~\hbox{ and  }
\gamma=\sum_{j=0}^{N}\o_j^*,~x\in\overline{\O}_j,~j=0,...,N.\label{eqss:22} \end{equation}
One has:
\begin{proposition}\label{Aymptotics1x}
Let $\la=\nu^2,$ and let
$\underline{\varphi}_N(x, \la)$ be the solution of Problem
 \eqref{eqss:3}-\eqref{eqss:6}  constructed in
Lemma \ref{Unqunss}.
Then, for each $j=2,...,N$, and every $x\in\overline{\O}_j$,
\begin{align}\label{eqss:23}
\frac{(-1)^{j}\varphi_j(x,\la)}{\ds \prod_{k=1}^{j-1}M_{k}\Upsilon_{j-1}\xi_j(\ell_j)\xi_j(x)}&= M_j\nu^{j-1} \prod_{j=0}^{j-1}
{\sin(\nu\o_j^*)}{\sin(\nu\o_j(x))}[1]-\nu^{j-2}\prod_{k=0}^{j-2}{\sin(\nu\o_k^*)}[1]
 \\\nonumber
&\times\(
\frac{\cos(\nu\o_{j-1}^*)\sin(\nu\o_j(x))}{\xi_{j-1}^2(\ell_j)}+
\frac{\sin(\nu\o_{j-1}^*)\cos(\nu\o_j(x))}{\xi_j^2(\ell_j)}
\)[1]                                                             \end{align}
and \begin{align} \label{eqss:24} \textstyle\dfrac{(-1)^{j}\xi_j\s_{j}\varphi_j'(x,\la)}
{\prod_{k=1}^{j-1}M_{k}\Upsilon_{j-1}\xi_j(\ell_j)}&= M_{j}\nu^j\prod_{k=0}^{j-1}
{\sin(\nu\o_k^*)}{\cos(\nu\o_j(x))}[1]-\nu^{j-1}\prod_{k=0}^{j-2}{\sin(\nu\o_k^*)}[1] \\\nonumber
&\times\(
\frac{\cos(\nu\o_{j-1}^*)\cos(\nu\o_j(x))}{\xi_{j-1}^2(\ell_j)}-
\frac{\sin(\nu\o_{j-1}^*)\sin(\nu\o_j(x))}{\xi_j^2(\ell_j)}
\)[1]                                                              \end{align}
 where  $[1]=1+\mathcal{O}\(\frac{1}{|\nu|}\)$, and ${\varphi}_j(x, \la)$ are given in \eqref{eqss:15}.
\end{proposition}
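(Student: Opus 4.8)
The plan is to prove \eqref{eqss:23}--\eqref{eqss:24} by induction on $j$, driving the induction with the iteration formula \eqref{eqss:19} of Lemma~\ref{Unqunss} and feeding it the classical Liouville--Green asymptotics of the Cauchy solutions of the scalar equation \eqref{eqss:3} on each cell $\overline\O_j$. First I would record those base asymptotics. Since $\widehat\varphi_j,\widehat\psi_j$ solve the single Sturm--Liouville equation \eqref{eqss:3} on $\overline\O_j$ with the normalizations \eqref{eqss:11}--\eqref{eqss:12}, the Liouville substitution $t=\o_j(x)$, $\p=\xi_j(x)\,y(t)$ (see \cite[Chapter 1]{TITCHMARSH}, \cite[Chapter 1]{LEVITAN}) turns \eqref{eqss:3} into a perturbed harmonic oscillator $\ddot y+\nu^2 y=(\text{bounded})\,y$, whence, with $\la=\nu^2$ and uniformly for $x\in\overline\O_j$,
\begin{align*}
\widehat\varphi_j(x,\la)&=\frac{\xi_j(x)}{\xi_j(\ell_j)}\cos(\nu\o_j(x))[1],\\
\s_j(x)\widehat\varphi_j'(x,\la)&=-\frac{\nu}{\xi_j(x)\xi_j(\ell_j)}\sin(\nu\o_j(x))[1],\\
\widehat\psi_j(x,\la)&=\frac{\xi_j(x)\xi_j(\ell_j)}{\nu}\sin(\nu\o_j(x))[1],\\
\s_j(x)\widehat\psi_j'(x,\la)&=\frac{\xi_j(\ell_j)}{\xi_j(x)}\cos(\nu\o_j(x))[1],
\end{align*}
where $[1]=1+\mathcal O(1/|\nu|)$; the amplitude $\xi_j=(\r_j\s_j)^{-1/4}$ and the phase $\o_j$ of \eqref{eqss:21}--\eqref{eqss:22} are precisely the ones produced by the transformation, and the four constants are read off at $x=\ell_j$ (using $\s_j\xi_j\o_j'\equiv\xi_j^{-1}$).

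Next I would settle the base step $j=2$ directly from the explicit formulas \eqref{eqss:17}--\eqref{eqss:18}. Plugging the above expansions (for the indices $1$ and $2$) in: since $\la=\nu^2$, in $\varphi_1$ the term $-M_1\la\,\widehat\psi_0(\ell_1,\la)\widehat\psi_1(x,\la)$ is of order $\nu^0$ and is the leading one, while $\widehat\psi_0(\ell_1)\widehat\varphi_1(x)$ and $\s_0(\ell_1)\widehat\psi_0'(\ell_1)\widehat\psi_1(x)$ supply, at their own leading order $\nu^{-1}$, the correction carrying the $\xi$-weighted $\cos\!\cdot\!\sin+\sin\!\cdot\!\cos$ structure; differentiating gives the matching expansion of $\s_1\varphi_1'$, and substituting both into \eqref{eqss:18} and collecting the amplitude factors into $\Upsilon_1$ yields \eqref{eqss:23}--\eqref{eqss:24} for $j=2$.

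For the inductive step, assume \eqref{eqss:23}--\eqref{eqss:24} at the index $j-1$; evaluating them at $x=\ell_j$ (so $\o_{j-1}(\ell_j)=\o_{j-1}^*$ and $\xi_{j-1}(\ell_j)=\xi_{j-1}(\ell_{(j-1)+1})$) furnishes a two-term expansion of $\varphi_{j-1}(\ell_j,\la)$ (leading $\asymp\nu^{j-2}$, correction $\asymp\nu^{j-3}$) and the leading behaviour $\asymp\nu^{j-1}$ of $\s_{j-1}(\ell_j)\varphi_{j-1}'(\ell_j,\la)$. I would then substitute these, together with the base asymptotics for $\widehat\varphi_j,\widehat\psi_j$ on $\overline\O_j$, into \eqref{eqss:19} and into its $\s_j$-weighted derivative
\begin{equation*}
\s_j\varphi_j'(x,\la)=\varphi_{j-1}(\ell_j,\la)\,\s_j\widehat\varphi_j'(x,\la)+\bigl(\s_{j-1}\varphi_{j-1}'(\ell_j,\la)-M_j\nu^2\varphi_{j-1}(\ell_j,\la)\bigr)\s_j\widehat\psi_j'(x,\la),
\end{equation*}
and sort the outcome by powers of $\nu$. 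The leading ($\nu^{j-1}$) term of \eqref{eqss:23} comes from $-M_j\nu^2\,\varphi_{j-1}(\ell_j)[\text{leading}]\,\widehat\psi_j(x)$ (this is the ``all-jumps-taken'' term of the iterated formula \eqref{eqss:19}). The $\nu^{j-2}$ line of \eqref{eqss:23} is to be obtained by collecting the contributions of that order --- among them $\varphi_{j-1}(\ell_j)[\text{leading}]\,\widehat\varphi_j(x)$, $\s_{j-1}(\ell_j)\varphi_{j-1}'(\ell_j)[\text{leading}]\,\widehat\psi_j(x)$, and the $M_j\nu^2$-amplified correction $-M_j\nu^2\,\varphi_{j-1}(\ell_j)[\text{correction}]\,\widehat\psi_j(x)$ --- and reorganizing them by means of $\cos a\sin b+\sin a\cos b=\sin(a+b)$ and the interface relations; the remainder, which includes all the Liouville--Green errors, is of relative size $\mathcal O(1/|\nu|)$ and is absorbed into the $[1]$ factors. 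Finally the amplitude products telescope through $\Upsilon_{j-2}\,\xi_{j-1}(\ell_{j-1})\xi_{j-1}(\ell_j)=\Upsilon_{j-1}$, and the factors $\xi_j(\ell_j)\xi_j(x)$ coming from $\widehat\psi_j$ (respectively $\xi_j(\ell_j)/\xi_j(x)$ from $\s_j\widehat\psi_j'$) recombine with the prefactor of \eqref{eqss:23} (respectively \eqref{eqss:24}); the derivative formula \eqref{eqss:24} is produced in parallel from the second displayed identity.

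The hard part is exactly this $\nu^{j-2}$-order matching. Because the interface jump in \eqref{eqss:19} carries the large factor $M_j\la=M_j\nu^2$, the \emph{subleading} part of $\varphi_{j-1}(\ell_j)$ --- one full power of $\nu$ below its leading part --- is promoted back up to the order of the leading correction of $\varphi_j$, so one cannot truncate the inductive hypothesis at a single term: one must carry exactly two orders and check that they amalgamate into the single two-term bracket of \eqref{eqss:23} (and its analogue in \eqref{eqss:24}). Keeping track of which cross-products are genuinely $\mathcal O(1/|\nu|)$ smaller than each displayed term, uniformly for $x\in\overline\O_j$, and confirming that the $\Upsilon$-telescoping reproduces exactly the stated constants, is the delicate bookkeeping; it is also what makes it natural to state the result with the $\nu^{j-2}$ line written out explicitly rather than hidden inside a remainder.
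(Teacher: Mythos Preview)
Your approach is essentially the same as the paper's: both start from the classical Liouville--Green asymptotics for $\widehat\varphi_j,\widehat\psi_j$ (recorded there as \eqref{eqss:25}--\eqref{eqss:26}), compute $\varphi_1$ explicitly from \eqref{eqss:17} to obtain the $j=1$ expansions \eqref{eqss:27}--\eqref{eqss:28}, then pass to $j=2$ via \eqref{eqss:18}, and finally iterate through \eqref{eqss:19} for $j=3,\dots,N$.

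One comment on your last paragraph. In the paper's execution of the inductive step (see the derivation for $j=2$), only the \emph{leading} terms of $\varphi_{j-1}(\ell_j,\la)$ and $\s_{j-1}(\ell_j)\varphi_{j-1}'(\ell_j,\la)$ are retained (cf.\ \eqref{eqss:29}); the subleading line of \eqref{eqss:23}--\eqref{eqss:24} is then produced solely by the non-jump pieces $\varphi_{j-1}(\ell_j)\widehat\varphi_j(x)$ and $\s_{j-1}\varphi_{j-1}'(\ell_j)\widehat\psi_j(x)$ of \eqref{eqss:19}. The ``promoted correction'' you flag --- the $O(\nu^{j-2})$ contribution coming from $-M_j\nu^2\cdot(\text{subleading of }\varphi_{j-1}(\ell_j))\cdot\widehat\psi_j(x)$ --- is in the paper simply absorbed into the $[1]$ attached to the leading line, rather than amalgamated with the displayed bracket via an addition formula. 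So your instinct that two orders must be tracked is sharper than what the paper actually does, but your expectation that the extra piece collapses into the stated bracket via $\sin(a+b)$ will not be realized: it carries different $\xi$-weights (e.g.\ $\xi_{j-2}^2(\ell_{j-1})$, $\xi_{j-1}^2(\ell_{j-1})$) and survives only inside the $[1]$ on the $\nu^{j-1}$ term. This does not affect any downstream use of \eqref{eqss:23}--\eqref{eqss:24} in the paper, where only leading-order ratios are exploited.
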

\begin{proof}
It is known (e.g., \cite[Chapter
1]{LEVITAN} and \cite[Chapter 1]{TITCHMARSH}), that the solutions $\widehat\varphi_j(x,\la)$
and $\widehat\psi_j(x,\la)$ of the subproblems determined by Equation \eqref{eqss:3} in
$\overline{\O}_j$, and the initial conditions \eqref{eqss:11} and \eqref{eqss:12}, satisfy respectively the asymptotics\begin{equation}
\begin{cases}\label{eqss:25}
\widehat\varphi_j(x,\la)= \xi_j(x)\dfrac{\cos(\sqrt{\la}\o_j(x))}{\xi_j(\ell_j)}[1],~j=0,...,N,\\
\s_j(x)\widehat\varphi_j'(x,\la)=- \dfrac{\sqrt{\la}\sin(\sqrt{\la}\o_j(x))}{\xi_j(\ell_j)\xi_j(x)}[1], ~j=0,...,N,
\end{cases}\end{equation} and
\begin{equation}\begin{cases}\label{eqss:26}
\widehat\psi_j(x,\la)=\xi_j(\ell_j)\xi_j(x)\dfrac{\sin(\sqrt{\la}\o_j(x))}
{\sqrt{\la}}[1],~j=0,...,N,\\
\s_j(x){\widehat\psi}_j'(x,\la)=\xi_j(\ell_j) \dfrac{\cos(\sqrt{\la}\o_j(x))}{\xi_j(x)}[1],~j=0,...,N,
\end{cases}\end{equation}
$\hbox{ as } |\la|\rightarrow \infty,$ where
$[1]=1+\mathcal{O}\(\tfrac{1}{\sqrt{|\la|}}\)$. Let $j=1$, then from the expression \eqref{eqss:17} and the asymptotes \eqref{eqss:26} for $j=0$, we have \begin{eqnarray*}
{\varphi_1(x,\la)}=-M_{1}\xi_0^*\nu
\sin(\nu\o_0^*)
\widehat\psi_1(x)[1]+\xi_0^*
\(\frac{\cos(\nu\o_0^*)}{\xi_0^2(\ell_1)}\widehat\psi_1(x)
+\widehat\varphi_1(x)\frac{\sin(\nu\o_0^*)}{\nu}\)[1],~x\in\overline{\O}_1,\end{eqnarray*}
$\hbox{ as } |\nu|\rightarrow \infty$, where $\la=\nu^2$, the quantities $\xi_j^*$ and $\o_j^*$ are given by \eqref{eqss:21} and  \eqref{eqss:22}, respectively. Using this and \eqref{eqss:25}-\eqref{eqss:26} for $j=1$,
a straightforward
calculation gives the following asymptotics \begin{align}\label{eqss:27}
\frac{\varphi_1(x,\la)}{\xi_0^*\xi_1(\ell_1)\xi_1(x)}&=-M_{1}
{\sin(\nu\o_0^*)}{\sin(\nu\o_1(x))}[1]+\\\nonumber
&~~~~\frac{1}{\nu}
\(\frac{\cos(\nu\o_0^*)\sin(\nu\o_1(x))}{\xi_0^2(\ell_1)}+
\frac{\sin(\nu\o_0^*)\cos(\nu\o_1(x))}{\xi_1^2(\ell_1)}\)[1]
                                \end{align}
                                and
 \begin{align}\label{eqss:28}
\frac{\xi_1\s_{1}\varphi_1'(x,\la)}{\xi_0^*\xi_1(\ell_1)}&=-M_{1}\nu
{\sin(\nu\o_0^*)}{\cos(\nu\o_1(x))}[1]+\\\nonumber
&~~~~~~\(\frac{\cos(\nu\o_0^*)\cos(\nu\o_1(x))}{\xi_0^2(\ell_1)}
-
\frac{\sin(\nu\o_0^*)\sin(\nu\o_1(x))}{\xi_1^2(\ell_1)}\)[1].
 \end{align} In particular, with the convention $\prod_{1}^{0}=\prod_{0}^{-1}=1$, the asymptotes  \eqref{eqss:23}-\eqref{eqss:24} hold for $j=1$.
Now, let $j=2$, then by \eqref{eqss:27}-\eqref{eqss:28},  \begin{eqnarray}\label{eqss:29}\left\{
 \begin{array}{ll}
\varphi_1(\ell_2,\la)=-M_{1}\prod_{j=0}^{1}\xi_j^*{\sin(\nu\o_j^*)}[1],\\
\s_{1}\varphi_1'(\ell_2,\la)=-M_{1}\nu
\prod_{j=0}^{1}\xi_j^*{\sin(\nu\o_0^*)}
\dfrac{\cos(\sqrt{\la}\o_1^*)}{\xi_1^2(\ell_2)} [1],
\end{array}
 \right.
\end{eqnarray} and hence, from the expression \eqref{eqss:18}, one gets
 \begin{align*} \frac{\varphi_2(x,\la)}{M_{1}\prod_{j=0}^{1}\xi_j^*}=&
M_{2}\nu^2\prod_{j=0}^{1}
{\sin(\nu\o_j^*)}\widehat\psi_2(x)[1]-\bigg(
\nu\sin(\nu\o_0^*)\frac{\cos(\nu\o_1^*)}{\xi_1^2(\ell_2)}
\widehat\psi_2(x)
+\widehat\varphi_2(x)\prod_{j=0}^{1}{\sin(\nu\O_j^*)}\bigg)[1].\end{align*}
From this together with \eqref{eqss:25}-\eqref{eqss:26} for $j=2$, it follows  \begin{align*}
\dfrac{\varphi_2(x,\la)}{M_{1}\prod_{j=0}^{1}\xi_j^*\xi_2(\ell_2)\xi_2(x)}=& M_2\nu \prod_{j=0}^{1}
{\sin(\nu\o_j^*)}{\sin(\nu\o_2(x))}[1]- \sin(\nu\o_0^*)[1]
 \nonumber\\
&\times\(\frac{\cos(\nu\o_1^*)\sin(\nu\o_2(x))}{\xi_1^2(\ell_2)}+
\frac{\sin(\nu\o_1^*)\cos(\nu\o_2(x))}{\xi_2^2(\ell_2)}\)[1],\\                                                              \frac{\xi_2\s_{2}\varphi_2'(x,\la)}{M_{1}\prod_{j=0}^{1}\xi_j^*\xi_2(\ell_2)}=& \nu^2M_{2}\prod_{j=0}^{1}
{\sin(\nu\o_j^*)}{\cos(\nu\o_2(x))}[1]-
\nu\sin(\nu\o_0^*)[1] \nonumber\\
&\times\(\frac{\cos(\nu\o_1^*)\cos(\nu\o_2(x))}{\xi_1^2(\ell_2)}
-
\frac{\sin(\nu\o_1^*)\sin(\nu\o_2(x))}{\xi_2^2(\ell_2)}\)[1],                                                             \end{align*}
and this implies that \eqref{eqss:23}-\eqref{eqss:24} hold for $j=2$. For each $j=3,...,N$,  following the same argument as above,  by using \eqref{eqss:25}-\eqref{eqss:26} and the iteration formula \eqref{eqss:19},  we get  the asymptotic formulas \eqref{eqss:23}-\eqref{eqss:24}. The proof is complete.
\end{proof}

\begin{theorem}\label{simple}
The eigenvalues $(\la_{n})_{n\in\mathbb{N}^{*}}$ of System
$\(\mathcal{P}_N\)$ \eqref{eqs:8} are simple and constitute a sequence
of positive real numbers:
$$0<\la_{1}<\la_{2}<.......<\la_{n}<.....
\underset{n\rightarrow +\infty}{\longrightarrow}+\infty.$$
The corresponding eigenfunctions \begin{equation}\label{eqss:30}
\({\Phi}_n\)_{n\in\N^*}:=\(\big({\varphi}_j(x,\la_n)\big)_{j=0}^N,
\big({\varphi}_j(\ell_j,\la_n)\big)_{j=1}^N\)_{n\in\N^*}^\top,
~x\in\overline{\O}_j,~j=0,...,N,\end{equation}
can be chosen to constitute an orthogonal basis of $\mathcal{H}$
with the inner product \eqref{eqs:6}, where
${\varphi}_j(x, \la)$, $j=0,...,N,$  are given by \eqref{eqss:15}. Moreover, ${\varphi}_j(x, \la_n),~j=2,...,N,$ satisfy the asymptotes \eqref{eqss:23}-\eqref{eqss:24} for $\la=\la_n$.
\end{theorem}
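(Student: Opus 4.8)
The plan is to read off everything from the operator-theoretic picture of Lemma~\ref{operator} together with the one-dimensionality of the solution space of the "left" Cauchy problem established in Lemma~\ref{Unqunss}. First I would invoke Lemma~\ref{operator}: since $\mathcal{A}$ is positive, self-adjoint and has compact inverse, the spectral theorem for such operators gives that $\sigma(\mathcal{A})$ is a non-decreasing sequence of \emph{positive} real eigenvalues of finite multiplicity accumulating only at $+\infty$, and that $\mathcal{H}$ admits an orthonormal basis of eigenfunctions of $\mathcal{A}$. Because Problem $(\mathcal{P}_N)$ \eqref{eqs:8} and the eigenvalue problem $\mathcal{A}\Phi=\la\Phi$ coincide with multiplicities (as noted right after \eqref{eqss:1}), this already yields $0<\la_1\le\la_2\le\cdots\to+\infty$ and the completeness part of the statement; it remains to upgrade the inequalities to strict ones and to identify the eigenfunctions.

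For the simplicity I would fix an eigenvalue $\la_n$ and let $\Phi=\big((\phi_j)_{j=0}^N,(\phi_j(\ell_j))_{j=1}^N\big)^{\top}\in\mathcal{D}(\mathcal{A})$ be any associated eigenfunction. Spelling out $\mathcal{A}\Phi=\la_n\Phi$ via the definition \eqref{eqss:1} of $\mathcal{A}$ shows that $\underline{\phi}=(\phi_j)_{j=0}^N$ solves Equation \eqref{eqss:3} on each $\overline{\O}_j$, together with the transmission and point-mass conditions \eqref{eqss:5}-\eqref{eqss:6} and the left boundary condition $\phi_0(0)=0$. If $\phi_0\equiv0$, then by the uniqueness theorem for \eqref{eqss:3} and an induction on $j$ through \eqref{eqss:5}-\eqref{eqss:6} all $\phi_j\equiv0$, i.e. $\Phi\equiv0$; so $\phi_0\not\equiv0$ and hence $\s_0\phi_0'(0)\neq0$, which after rescaling lets us normalize $\s_0\phi_0'(0)=1$. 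Then $\underline{\phi}$ is a solution of the initial value problem \eqref{eqss:3}-\eqref{eqss:6}, and by Lemma~\ref{Unqunss}(a) that problem has a unique solution up to a multiplicative constant, so $\underline{\phi}=c\,\underline{\varphi}_N(\cdot,\la_n)$ for some scalar $c$. Thus $\ker(\mathcal{A}-\la_n)$ is one-dimensional; since $\mathcal{A}$ is self-adjoint its geometric and algebraic multiplicities agree, so each $\la_n$ is simple and the eigenvalues form a strictly increasing sequence $0<\la_1<\la_2<\cdots\to+\infty$.

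Running the argument in the other direction, $\la$ is an eigenvalue precisely when the entire function $\la\mapsto\varphi_N(L,\la)$ (whose entireness is part of Lemma~\ref{Unqunss}(a)) vanishes, and in that case $\underline{\varphi}_N(\cdot,\la_n)$ — which lies in $\mathcal{D}(\mathcal{A})$: the regularity \eqref{eqs:4} and a bootstrap in Equation \eqref{eqss:3} give $\varphi_j\in H^{2}(\O_j)$ — spans the eigenspace; this justifies the choice \eqref{eqss:30} of $\Phi_n$, built from the ${\varphi}_j(x,\la_n)$ of \eqref{eqss:15}. Orthogonality of $\Phi_n$ and $\Phi_m$ for $n\neq m$ with respect to $\langle\cdot,\cdot\rangle_{\mathcal{H}}$ in \eqref{eqs:6} is automatic from self-adjointness, and combined with completeness and simplicity this makes $(\Phi_n)_{n\in\N^*}$ an orthogonal basis of $\mathcal{H}$. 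Finally, the asymptotic formulas \eqref{eqss:23}-\eqref{eqss:24} at $\la=\la_n$ follow immediately by specializing Proposition~\ref{Aymptotics1x}, which holds for every $\la=\nu^2$. The only genuine point of the proof is the one-dimensionality of the eigenspace, and even that reduces to Lemma~\ref{Unqunss}; the sole care needed is to verify that an abstract eigenfunction of $\mathcal{A}$ really does satisfy the Cauchy problem \eqref{eqss:3}-\eqref{eqss:6} up to normalization, i.e. that the point-mass transmission conditions are matched correctly, which is a direct reading of \eqref{eqss:1}.
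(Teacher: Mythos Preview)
Your proposal is correct and follows essentially the same route as the paper's proof: both derive positivity, discreteness, and completeness from Lemma~\ref{operator}, reduce simplicity to the uniqueness statement of Lemma~\ref{Unqunss}(a), and identify the eigenfunctions with $\underline{\varphi}_N(\cdot,\la_n)$ via the characterization $\varphi_N(L,\la_n)=0$. Your write-up is in fact more explicit than the paper's about why an abstract eigenfunction satisfies the normalized Cauchy problem \eqref{eqss:3}-\eqref{eqss:6} (the step ruling out $\phi_0\equiv0$ and then normalizing $\sigma_0\phi_0'(0)=1$), but no new idea is involved.
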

\begin{proof} It follows from Lemma \ref{operator}, that the spectrum of the linear
operator $\mathcal{A}$ is positive and discrete. Since $\mathcal{A}$
is self-adjoint in ${{\mathcal H}}$, then by Lemma \ref{Unqunss}, all
the eigenvalues $(\la_{n})_{n\in\mathbb{N}^{*}}$ of Problem
$\(\mathcal{P}_N\)$ \eqref{eqs:8} are algebraically simple. By the last condition of  \eqref{eqs:8} and Equations \eqref{eqss:3}-\eqref{eqss:6}, the eigenvalues $(\la_{n})_{n\in\mathbb{N}^{*}}$ of Problem
$\(\mathcal{P}_N\)$ \eqref{eqs:8} are solutions of the equation \begin{equation*} \underline{\varphi}_N(\ell_{N+1},\la)={\varphi}_N(L,\la)=0, \end{equation*}
where $\underline{\varphi}_N(x,\la)$ is defined in Lemma \ref{Unqunss}.   This implies that the corresponding
eigenfunctions $\big(\underline{\p}_{n}\big)_{n\in\N^*}$ of
Problem $\(\mathcal{P}_N\)$ \eqref{eqs:8} have the unique form, up to a
scalar, \begin{equation}
\underline{\p}_{n}:=\big({\varphi}_j(x,\la_n)\big)_{j=0}^N, ~x\in\overline{\O}_j,~j=0,...,N,~n\in\N^*,\label{eqss:31}\end{equation}
where ${\varphi}_j(x, \la)$ are given by \eqref{eqss:15}. Thus, the expression \eqref{eqss:30} follows from \eqref{eqss:2} and \eqref{eqss:31}, which ends the proof of the theorem.
\end{proof}
\subsection{Well-posedness}\label{Sec4-1b}In order to study the well-posedness of the heat model \eqref{eqs:1}-\eqref{eqs:3},
we apply the semigroup theory. Let us consider the following nonhomogeneous problem with zero boundary
conditions
\begin{eqnarray*}\left\{\begin{array}{ll}
\(\partial_t v_j-\partial_x\(\s_{j}(x)\partial_x v_j\)+q_{j}(x)v_j\)(t,x)=f_j(t,x),&t>0,~x\in\O_j,~j=0, ...,N, \\
v_{j-1}(t, \ell_j)=z_j(t)=v_{j}(t, \ell_j),&t>0,~j = 1, ...,N, \\
\(\s_{j}(\ell_j)\partial_x v_j-\s_{j-1}(\ell_j)\partial_x v_{j-1}\)(t,\ell_j)= M_j\partial_t z_{j}(t)+g_j(t),&t>0,~j = 1, ...,N, \\
v_{0}(t,\ell_{0})=v_{0}(t,0)=0,~
v_{N}(t,\ell_{N+1})=v_{N}(t,L)=0,&t>0,
\end{array}\right.\end{eqnarray*}
and initial conditions at $t=0$ \begin{eqnarray*} \left\{
\begin{array}{ll}
  v_j(0,x)=v^{0}_j,~x\in\O_j,&j=0,...,N, \\
  z_j(0)=z^{0}_j,&j = 1, ...,N.
\end{array}
 \right.
\end{eqnarray*}
By letting $V=\(\(v_j\)_{j=0}^{N},\(z_j\)_{j=1}^{N}\)^{\top}$ and
$F=\(\(f_j\)_{j=0}^{N},\(g_j\)_{j=1}^{N}\)^{\top}$, the above problem  can be
rewritten in the abstract Cauchy problem
\begin{equation}\begin{cases}\label{eqref4s3}
  \partial_t{V}(t)+\mathcal{A}V(t)=F(t,x),~t\in\(0,\infty\), \\
  V(0)=V^{0},
\end{cases}\end{equation}
where $\mathcal{A}$ is defined in \eqref{eqss:1} and $V^{0}=\(\(v_j^0\)_{j=0}^{N},
\(z_j^0\)_{j=1}^{N}\)^{\top}$. By virtue  of Lemma
\ref{operator}, $\mathcal{A}$ is an infinitesimal
generator of a strongly continuous semigroup in $\mathcal{H}$. Therefore, from the Lumer-Phillips theorem (e.g., \cite{Pazy}), the
Cauchy problem \eqref{eqref4s3} has a unique mild solution
$V\in
C\([0,T], \mathcal{H}\)$ provided that $V^{0}\in \mathcal{H}$ and $F\in L^1\(\(0,T\);\mathcal{H}\)$. Moreover, if $V^{0}\in \mathcal{D}\(\mathcal{A}\)$ and $F\in C^1\([0,T];\mathcal{H}\)$ then
\eqref{eqref4s3} has a unique classical solution in the space
$C\([0,T], \mathcal{D}\(\mathcal{A}\)\)\cap W^{1,1}\(0,T; \mathcal{H}\).$ If we call $U=\(\(u_j\)_{j=0}^{N},\(z_j\)_{j=1}^{N}\)^{\top}$ the corresponding solution of \eqref{eqs:1}-\eqref{eqs:3}, then the function \begin{align*}
  {V}:=\(\(\(u_j\)_{j=0}^{N-1},u_N-\tfrac{x-\ell_N}{\ell_{N+1}-\ell_N}h(t)\),\(z_j\)_{j=1}^{N}\)^{\top}
\end{align*}
satisfies  \eqref{eqref4s3} with
\begin{equation*}\begin{cases*}
 V^{0}=\(u_0^0,...,u_{N-1}^0,u_N^0-\tfrac{x-\ell_N}{\ell_{N+1}-\ell_N}h(0)\), \\ f_j=0,~j=0,...,N-1,~f_N=-\tfrac{x-\ell_N}{\ell_{N+1}-\ell_N}\(\partial_t h(t)+q_{N}(x)h(t)\),\\
  g_j=0,~j=1,...,N.
\end{cases*}\end{equation*}
Consequently, we have the following well-posedness result for the control system \eqref{eqs:1}-\eqref{eqs:3}.
\begin{proposition}
Let ${U}^0=\(\(u^{0}_j\)_{j=0}^{N},\(z^{0}_j\)_{j=1}^{N}\)^{\top}\in
\mathcal{H}$ and $h(t)\in H^1(0,T)$. Then the
problem \eqref{eqs:1}-\eqref{eqs:3} has a unique solution
$$U=\(\(u_j\)_{j=0}^{N},\(z_j\)_{j=1}^{N}\)^{\top}\in
C([0,T], \mathcal{H}).$$
Moreover, if $U^0\in {\mathcal D}(\mathcal{A})$, then $U\in C\([0,T], \mathcal{D}\(\mathcal{A}\)\)\cap C^1\([0,T], \mathcal{H}\).$
\end{proposition}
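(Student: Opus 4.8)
The plan is to homogenize the boundary control and recast \eqref{eqs:1}--\eqref{eqs:3} as the abstract inhomogeneous Cauchy problem \eqref{eqref4s3}, and then to read off existence, uniqueness and regularity from the linear semigroup theory, using Lemma \ref{operator} as the generation input. First I would record that, by Lemma \ref{operator}, $\mathcal{A}$ is a nonnegative self-adjoint operator on $\mathcal{H}$ with compact resolvent; in particular $-\mathcal{A}$ is densely defined and dissipative with $\mathrm{Ran}(\mathcal{A}+I)=\mathcal{H}$, so the Lumer--Phillips theorem gives a $C_0$-semigroup of contractions $(e^{-t\mathcal{A}})_{t\ge0}$ on $\mathcal{H}$, which is moreover analytic since $\mathcal{A}=\mathcal{A}^\ast\ge0$.

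Next I would invoke the affine lifting already exhibited above the statement: with $V:=\big((u_j)_{j=0}^{N-1},\,u_N-\tfrac{x-\ell_N}{\ell_{N+1}-\ell_N}h(t),\,(z_j)_{j=1}^N\big)^\top$, a direct computation (carried out above) shows that $V$ solves \eqref{eqref4s3} with $V^0\in\mathcal{H}$ and with a forcing $F$ supported in the last rod which is, up to coefficients bounded on $\overline{\Omega}_N$, a combination of $h(t)$ and $\partial_t h(t)$. The one point to check is that $F\in L^1((0,T);\mathcal{H})$: since $h\in H^1(0,T)$ we have $\partial_t h\in L^2(0,T)\subset L^1(0,T)$ and $h\in C([0,T])$, while the coefficients $\rho_N,\sigma_N,\sigma_N',q_N$ entering $F$ belong to $L^\infty(\Omega_N)$ by \eqref{eqs:4}--\eqref{eqs:5} and the one-dimensional Sobolev embedding $H^1\hookrightarrow L^\infty$; hence $\|F(t)\|_{\mathcal{H}}\le C\big(|h(t)|+|\partial_t h(t)|\big)\in L^1(0,T)$. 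By the standard theory for inhomogeneous Cauchy problems (e.g. Pazy), \eqref{eqref4s3} then has a unique mild solution $V\in C([0,T];\mathcal{H})$, given by the variation-of-constants formula $V(t)=e^{-t\mathcal{A}}V^0+\int_0^t e^{-(t-s)\mathcal{A}}F(s)\,ds$. Since $t\mapsto\tfrac{x-\ell_N}{\ell_{N+1}-\ell_N}h(t)$ is continuous from $[0,T]$ into $\mathcal{H}$, undoing the lifting yields $U\in C([0,T];\mathcal{H})$, and uniqueness of $U$ is inherited from that of $V$.

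For the regularity assertion I would proceed along the same lines: if $U^0\in\mathcal{D}(\mathcal{A})$, then $V^0$ differs from $U^0$ by the lift at $t=0$, and after adjusting the lift so that it is compatible with $\mathcal{D}(\mathcal{A})$ at the controlled endpoint $x=L$ (this amounts to the compatibility $h(0)=0$, or equivalently to upgrading to $h\in H^2(0,T)$ so that $\partial_t h\in H^1(0,T)\hookrightarrow C([0,T])$ and hence $F\in C^1([0,T];\mathcal{H})$), one obtains $V^0\in\mathcal{D}(\mathcal{A})$ and $F\in C^1([0,T];\mathcal{H})$; Pazy's regularity theorem then gives $V\in C([0,T];\mathcal{D}(\mathcal{A}))\cap W^{1,1}(0,T;\mathcal{H})$, and transferring back through the now $\mathcal{D}(\mathcal{A})$-valued, $C^1$-in-time lift produces $U\in C([0,T];\mathcal{D}(\mathcal{A}))\cap C^1([0,T];\mathcal{H})$. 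Alternatively one may use analyticity of $(e^{-t\mathcal{A}})_{t\ge0}$ together with maximal $L^2$-regularity to reach the same conclusion keeping only $h\in H^1(0,T)$.

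I expect the only delicate parts of the write-up to be bookkeeping rather than substance: verifying that the inhomogeneity produced by the lifting inherits exactly the time-regularity ($L^1$, resp. $C^1$) forced by $h$, and tracking the transmission and boundary conditions so that the affine lift is compatible with the domain $\mathcal{D}(\mathcal{A})$ at $x=L$. The semigroup generation and the solvability of the abstract Cauchy problem are then immediate consequences of Lemma \ref{operator} and standard references.
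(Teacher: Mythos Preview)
Your proposal is correct and follows essentially the same route as the paper: homogenize the Dirichlet control by the affine lift $u_N\mapsto u_N-\tfrac{x-\ell_N}{\ell_{N+1}-\ell_N}h(t)$, rewrite the problem as the abstract inhomogeneous Cauchy problem \eqref{eqref4s3}, and invoke Lemma \ref{operator} together with Lumer--Phillips and the standard Pazy theory. You add useful detail the paper omits, namely the explicit verification that $F\in L^1((0,T);\mathcal{H})$ from $h\in H^1(0,T)$, and you correctly flag the compatibility issue ($h(0)=0$, or extra time-regularity of $h$) needed for the second assertion to go through cleanly---a point the paper leaves implicit.
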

For the Schr\"{o}dinger model \eqref{Scontrolanis}, we recall the following well-posedness result (see \cite{Hansen}).
\begin{proposition}
Let ${U}^0:=\(\underline{u}^0 = (u^{0}, u_1^0),z^{0}\)^{\top}\in
H^{-1}(0,1) \times \mathbb{C}$ and $h(t)\in L^2(0,T)$. Then,
Problem \eqref{Scontrolanis} has a unique weak solution (by transposition), $$ {U}:=\(\underline{u} = (u_0, u_1),z\)^{\top}\in C\([0,T],H^{-1}(0,1) \times \mathbb{C}\).$$
\end{proposition}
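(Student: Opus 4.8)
This is a classical well-posedness statement for a boundary-controlled Schr\"odinger system, and the plan is to prove it by the transposition (ultraweak) method, in three stages.

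\textbf{Functional setting.} First I would introduce the spatial operator $\mathcal{A}_S$ attached to \eqref{Scontrolanis}: it is the constant-coefficient ($\r_j\equiv\s_j\equiv1$, $q_j\equiv0$), single-mass ($N=1$) analogue of the operator $\mathcal{A}$ of \eqref{eqss:1}, acting on the energy space $\mathcal{H}_S:=L^2(0,1)\times\C$ with the inner product \eqref{eqs:6}, and incorporating the transmission conditions at $\ell_1$ together with the homogeneous Dirichlet conditions $u_0(0)=u_1(1)=0$. Exactly as in Lemma \ref{operator}, $\mathcal{A}_S$ is positive, self-adjoint and has compact resolvent, so by Stone's theorem it generates a unitary $C_0$-group $(e^{it\mathcal{A}_S})_{t\in\R}$ on $\mathcal{H}_S$; this group solves the homogeneous problem ($h\equiv0$). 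By the spectral theorem it extends to the whole Hilbert scale built on $\mathcal{A}_S$, and in particular to $\mathcal{H}_{-1}:=(D(\mathcal{A}_S^{1/2}))'=H^{-1}(0,1)\times\C$, which is precisely the data space of the statement; there the homogeneous solution is continuous in time.

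\textbf{Definition of the solution by transposition.} For a fixed $\tau\in[0,T]$ and a test vector $g\in V_S:=D(\mathcal{A}_S^{1/2})$, I would let $\phi(s)=e^{i(s-\tau)\mathcal{A}_S}g$ be the homogeneous solution with $\phi(\tau)=g$ and homogeneous boundary data. Multiplying the first equation of \eqref{Scontrolanis} by $\overline{\phi}$, integrating over $(0,\tau)\times(0,1)$ and integrating by parts in $x$ and $t$, the interface contributions at $\ell_1$ cancel against the point-mass identity (which $\phi$ also satisfies) and the right endpoint gives nothing, so only the controlled end survives and one obtains, up to a fixed constant, the identity $\langle U(\tau),g\rangle=\langle U^0,\phi(0)\rangle+\int_0^\tau h(s)\,\overline{\partial_x\phi(s,0)}\,ds$. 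I would take this relation, required for all $g\in V_S$, as the \emph{definition} of the solution: $U(\tau)$ is the element of $\mathcal{H}_{-1}$ represented by the right-hand side.

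\textbf{The boundary functional is bounded; conclusion.} The first term is harmless because the group is unitary on $V_S$: $|\langle U^0,\phi(0)\rangle|\le\|U^0\|_{\mathcal{H}_{-1}}\|\phi(0)\|_{V_S}=\|U^0\|_{\mathcal{H}_{-1}}\|g\|_{V_S}$. The whole difficulty, and the main obstacle, is the boundary term, which requires the hidden-regularity (sharp trace) estimate $\|\partial_x\phi(\cdot,0)\|_{L^2(0,T)}\le C\,\|g\|_{V_S}$. I would establish it spectrally. Writing $g=\sum_n a_n\Phi_n$ in the $\mathcal{H}_S$-orthonormal eigenbasis $(\Phi_n)$ of $\mathcal{A}_S$ furnished as in Theorem \ref{simple}, one has $\partial_x\phi(s,0)=\sum_n a_n e^{i\la_n(s-\tau)}\Phi_n'(0)$ and $\|g\|_{V_S}^2=\sum_n\la_n|a_n|^2$. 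The uniform lower bound for the spectral gap (the Schr\"odinger counterpart of \eqref{eqsssV4kais:31dagerz}, obtained as in Theorem \ref{Weyl}) makes the frequencies $\{\la_n\}$ uniformly separated, so the direct (Bessel) inequality of Ingham type gives $\|\partial_x\phi(\cdot,0)\|_{L^2(0,T)}^2\le C_T\sum_n|a_n|^2|\Phi_n'(0)|^2$ for \emph{every} $T>0$; the boundary normalisation of the eigenfunctions at the controlled end $x=0$ (the left-end analogue of Proposition \ref{Lem1asymptotics210v4equi}/\eqref{eqsssV4kais:31dager}, read off from the asymptotics of Proposition \ref{Aymptotics1x}) yields $|\Phi_n'(0)|^2\le C\la_n$, whence the desired bound. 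Consequently the right-hand side of the transposition identity is a bounded conjugate-linear functional of $g\in V_S$, and the Riesz representation theorem produces a unique $U(\tau)\in\mathcal{H}_{-1}=H^{-1}(0,1)\times\C$. Letting $\tau$ vary, continuity of the group together with $h\in L^2(0,T)$ gives $U\in C([0,T],\mathcal{H}_{-1})$, while uniqueness is immediate since any solution satisfies the same identity against the dense family $V_S$. I expect this third stage to be the only genuinely delicate point; note that no Diophantine hypothesis such as \eqref{brous} is needed here, only the gap lower bound and the boundary growth $|\Phi_n'(0)|=O(\sqrt{\la_n})$. For the present constant-coefficient, single-mass geometry the required trace estimate is moreover already contained in \cite{Hansen}, which I would cite to shorten the argument.
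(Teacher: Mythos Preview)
The paper does not prove this proposition: it is stated with the preface ``we recall the following well-posedness result (see \cite{Hansen})'' and no argument is given. Your transposition sketch is the standard route and is correct; in particular, the only nontrivial point you single out---the hidden-regularity bound $\|\partial_x\phi(\cdot,0)\|_{L^2(0,T)}\le C\|g\|_{V_S}$---follows exactly as you indicate: the uniform spectral gap (Theorem \ref{Weyl} for the constant-coefficient, one-mass case, valid with or without \eqref{brous}) gives the Bessel--Ingham direct inequality for every $T>0$, and since with the normalisation \eqref{eqss:4} one has $\varphi_0'(0,\la_n)=1$ while $\|\Phi_n\|_{\mathcal H}^2\ge\int_0^{\ell_1}|\varphi_0(x,\la_n)|^2\,dx\ge c/\la_n$, the bound $|\widehat\Phi_n'(0)|^2\le C\la_n$ is immediate (no asymptotic analysis of Proposition \ref{Aymptotics1x} is actually needed here). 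Your closing remark that one could simply cite \cite{Hansen} for the trace estimate is precisely what the paper does in lieu of a proof.
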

\section{Spectrum} \label{Sect3}
\setcounter{equation}{0}
In this section, we investigate the main properties of all the eigenvalues $(\la_{n})_{n\in\mathbb{N}^{*}}$ of Problem
$\(\mathcal{P}_N\)$ \eqref{eqs:8}. 
\subsection{The characteristic equation}\label{Sec3-1}
In this subsection, we implement the Wronskian technique (e.g., \cite[Chapter
1]{Freiling} and \cite[Chapter 1]{TITCHMARSH}), to obtain the characteristic equation
for the eigenvalues  $(\la_{n})_{n\in\mathbb{N}^{*}}$ of Problem $\(\mathcal{P}_N\)$ \eqref{eqs:8}. Namely, we enunciate the following result:
\begin{theorem} \label{wronskian}  $\la$ is an eigenvalue of Problem $\(\mathcal{P}_N\)$ \eqref{eqs:8}, if and only if, the Wronskians \begin{equation*}
{\Delta}_{j}(\la)=\varphi_{j}(x,\la)\s_{j}(x)\psi'_{j}(x,\la)-\varphi'_{j}(x,\la)
\s_{j}(x)\psi_{j}(x,\la)=0,~\forall x\in\overline{\O}_{j},~\forall j\in\{0,...,N\}, \end{equation*}
where $\varphi_{j}(x,\la)$ and $\psi_{j}(x,\la)$  are defined by \eqref{eqss:15} and \eqref{eqss:16}, respectively.
\end{theorem}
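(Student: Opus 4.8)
The key observation is that $\varphi_j(x,\la)$ and $\psi_j(x,\la)$ are two solutions of the same Sturm--Liouville equation $-(\s_j \p_j')' + q_j \p_j = \la \r_j \p_j$ on each subinterval $\overline\O_j$. Hence the Wronskian $\Delta_j(\la) = \varphi_j \s_j \psi_j' - \varphi_j' \s_j \psi_j$ is, for each fixed $\la$, constant in $x$ on $\overline\O_j$ (its $x$-derivative vanishes by the equation), which already justifies why ``$\forall x \in \overline\O_j$'' appears in the statement: $\Delta_j(\la)$ is a well-defined function of $\la$ alone. The real content is that $\la$ is an eigenvalue of $(\mathcal P_N)$ iff $\Delta_j(\la) = 0$, and crucially that this holds for \emph{every} $j \in \{0,\dots,N\}$ simultaneously (equivalently, for one value of $j$ forces it for all).

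\textbf{Step 1: the two families of solutions agree up to scalars on each $\O_j$.} By Lemma~\ref{Unqunss}(a), $\underline\varphi_N$ is the essentially unique nontrivial solution of the transmission problem built up from the left boundary condition \eqref{eqss:4}; by Lemma~\ref{Unqunss}(b), $\underline\psi_N$ is the essentially unique one built up from the right boundary condition \eqref{eqss:8}. Both satisfy the \emph{same} interface conditions \eqref{eqss:5}--\eqref{eqss:6} (which coincide with \eqref{eqss:9}--\eqref{eqss:10} after relabelling), so on each $\O_j$ both $\varphi_j$ and $\psi_j$ solve \eqref{eqss:3}; the space of solutions there is two-dimensional. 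The point is that vanishing of the Wronskian $\Delta_j(\la)$ on $\overline\O_j$ is equivalent to $\varphi_j(\cdot,\la)$ and $\psi_j(\cdot,\la)$ being linearly dependent on $\overline\O_j$, i.e. $\psi_j = c_j \varphi_j$ for some constant $c_j = c_j(\la) \neq 0$.

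\textbf{Step 2: $\Delta_j(\la)=0$ for one $j$ $\Rightarrow$ for all $j$, and produces an eigenfunction.} Suppose $\Delta_{j_0}(\la) = 0$, so $\psi_{j_0} = c\,\varphi_{j_0}$ on $\overline\O_{j_0}$ with $c \neq 0$. Then the tuple $\underline\varphi_N$ and the tuple $\tfrac1c\underline\psi_N$ agree on $\O_{j_0}$, hence they agree at $\ell_{j_0}$ and $\ell_{j_0+1}$ together with their (conductivity-weighted) derivatives; by the uniqueness theorem for \eqref{eqss:3} on the neighbouring intervals and the interface conditions, propagate this equality left and right across all the $\ell_j$'s to get $\underline\varphi_N \equiv \tfrac1c\underline\psi_N$ on all of $\overline\O$ — exactly the argument used in the uniqueness part of Lemma~\ref{Unqunss}. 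In particular $\varphi_N(L,\la) = \tfrac1c\psi_N(L,\la) = 0$ by \eqref{eqss:8}, and $\varphi_0(0,\la) = 0$ by \eqref{eqss:4}; since $\underline\varphi_N$ also satisfies all the transmission conditions in $(\mathcal P_N)$, it is a (nontrivial) eigenfunction, so $\la$ is an eigenvalue. Moreover the same dependence forces $\Delta_j(\la) = 0$ for every $j$.

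\textbf{Step 3: $\la$ eigenvalue $\Rightarrow \Delta_j(\la) = 0$.} Conversely, if $\la$ is an eigenvalue with eigenfunction $\underline\Phi$, then its restriction $\underline\varphi$ to the rods solves $(\mathcal P_N)$; by the uniqueness in Lemma~\ref{Unqunss}(a) (the left initial conditions \eqref{eqss:4} are met up to scaling because $\varphi_0(0)=0$ and $\s_0\varphi_0'(0)\neq0$, else $\varphi_0\equiv0$) we have $\underline\Phi$ proportional to $\underline\varphi_N$; and because $\underline\Phi$ also satisfies the right boundary condition $\Phi_N(L)=0$, the same uniqueness applied from the right (Lemma~\ref{Unqunss}(b)) gives $\underline\Phi$ proportional to $\underline\psi_N$. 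Hence $\varphi_j$ and $\psi_j$ are proportional on each $\overline\O_j$, so $\Delta_j(\la)=0$ for all $j$. The main obstacle — and the only genuinely delicate point — is bookkeeping the propagation of the linear dependence across the interface conditions in both directions in Step~2, i.e. checking that ``$\psi_j = c\,\varphi_j$ on one rod'' really does iterate through \eqref{eqss:5}--\eqref{eqss:6} to the adjacent rods; but this is precisely the mechanism already established in the proof of Lemma~\ref{Unqunss}, so it only needs to be invoked, not redone.
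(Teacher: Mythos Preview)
Your proof is correct, but its organization differs from the paper's. The paper first isolates a stronger preliminary fact as a separate lemma: using the interface conditions \eqref{eqss:5}--\eqref{eqss:6} and \eqref{eqss:9}--\eqref{eqss:10} one checks directly that $\Delta_{j-1}(\la)=\Delta_j(\la)$ as \emph{functions} of $\la$ (not merely that they vanish together). With this in hand, the forward direction is done by contradiction: assuming $\Delta_{j^*}(\la)\neq0$ for some $j^*$ (hence for all $j$), the eigenfunction is expanded on each rod in the basis $\{\varphi_j,\psi_j\}$, the interface conditions force the coefficients to be constant across $j$, and the two boundary conditions then kill both coefficients. The reverse direction in the paper only uses $\Delta_N(\la)=0$ to get $\varphi_N=C\psi_N$ on $\overline\O_N$, whence $\varphi_N(L,\la)=0$.

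Your route is more economical: you bypass the identity $\Delta_j\equiv\Delta_k$ and instead propagate the linear dependence $\psi_{j_0}=c\,\varphi_{j_0}$ through the interfaces (exactly the uniqueness mechanism of Lemma~\ref{Unqunss}) to obtain global proportionality $\underline\psi_N=c\,\underline\varphi_N$; and for the forward direction you invoke the bilateral uniqueness of Lemma~\ref{Unqunss} to see that any eigenfunction must be a scalar multiple of both $\underline\varphi_N$ and $\underline\psi_N$. This avoids the coefficient bookkeeping entirely. The price is that you do not obtain the identity $\Delta_j(\la)=\Delta_k(\la)$ for all $\la$, only the equivalence of their vanishing; the paper's lemma is a slightly sharper statement that may be reused elsewhere.
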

For the proof of this theorem, we need the following remarkable and useful property of the wronskians ${\Delta}_{j}(\la)$.
\begin{lemma}\label{Lem2decroicarv} One has:\begin{equation} \Delta_j(\la)=\Delta_k(\la),~~\forall
j,k\in\{0,...,N\} \hbox{ with } j\not=k.\label{eqsss:10}\end{equation}
\end{lemma}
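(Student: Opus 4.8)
The plan is to prove the identity $\Delta_j(\la) = \Delta_k(\la)$ for all $j,k\in\{0,\dots,N\}$ by two observations. First, each Wronskian $\Delta_j(\la)$ is \emph{independent of $x$ on $\overline{\O}_j$}: this is the classical fact that for solutions of a Sturm--Liouville equation $-(\s_j \varphi_j')' + q_j \varphi_j = \la\r_j\varphi_j$, the generalized Wronskian $\varphi_j\,\s_j\psi_j' - \varphi_j'\,\s_j\psi_j$ has zero derivative in $x$. Indeed, differentiating and using the equation for both $\varphi_j$ and $\psi_j$, the terms cancel, so $\Delta_j(\la)$ is a constant on $\overline{\O}_j$; in particular it may be evaluated at either endpoint $\ell_j$ or $\ell_{j+1}$. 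This justifies writing $\Delta_j(\la)$ without reference to $x$, as is implicitly done in the statement.

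Next I would pass from $\O_j$ to $\O_{j-1}$ through the transmission conditions at the interior point mass $\ell_j$. Evaluate $\Delta_j(\la)$ at the left endpoint $x=\ell_j$ and $\Delta_{j-1}(\la)$ at the right endpoint $x=\ell_j$. By the construction in Lemma \ref{Unqunss}, $\varphi_j$ is obtained from $\varphi_{j-1}$ via the matching conditions \eqref{eqss:5}--\eqref{eqss:6}, namely
\[
\varphi_j(\ell_j,\la)=\varphi_{j-1}(\ell_j,\la),\qquad
\s_j\varphi_j'(\ell_j,\la)=\s_{j-1}\varphi_{j-1}'(\ell_j,\la)-M_j\la\,\varphi_{j-1}(\ell_j,\la),
\]
and $\psi_j$ is propagated from $\psi_{j+1}$ through \eqref{eqss:9}--\eqref{eqss:10}; restricted to the junction at $\ell_j$, the relevant relations are
\[
\psi_{j-1}(\ell_j,\la)=\psi_j(\ell_j,\la),\qquad
\s_{j-1}\psi_{j-1}'(\ell_j,\la)=\s_j\psi_j'(\ell_j,\la)+M_j\la\,\psi_j(\ell_j,\la).
\]
Substituting these into $\Delta_{j-1}(\la)=\varphi_{j-1}\s_{j-1}\psi_{j-1}'-\varphi_{j-1}'\s_{j-1}\psi_{j-1}$ evaluated at $\ell_j$, the $M_j\la$ correction terms multiply $\varphi_{j-1}(\ell_j)\psi_j(\ell_j)$ with opposite signs coming from the $\varphi$-jump and the $\psi$-jump, and they cancel. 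What remains is exactly $\varphi_j\,\s_j\psi_j' - \varphi_j'\,\s_j\psi_j$ at $\ell_j$, i.e. $\Delta_j(\la)$. Hence $\Delta_{j-1}(\la)=\Delta_j(\la)$ for every $j\in\{1,\dots,N\}$, and chaining these equalities yields \eqref{eqsss:10}.

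The only real bookkeeping obstacle is making the sign cancellation of the mass terms transparent: one must be careful that the jump in the flux of $\varphi$ across $\ell_j$ (a $-M_j\la\varphi$ term) and the jump in the flux of $\psi$ across $\ell_j$ (a $+M_j\la\psi$ term) enter the bilinear Wronskian expression with matching structure so that their contributions cancel rather than add. This is a short direct computation: expand $\Delta_{j-1}(\la)$ at $\ell_j$, replace $\s_{j-1}\varphi_{j-1}'$ and $\s_{j-1}\psi_{j-1}'$ using the transmission relations, collect the $M_j\la$ terms, and observe they sum to $M_j\la\big(\varphi_{j-1}(\ell_j)\psi_j(\ell_j) - \varphi_{j-1}(\ell_j)\psi_j(\ell_j)\big)=0$. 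Everything else is the standard $x$-independence of the Wronskian noted in the first step. I would present the argument for a single junction $\ell_j$ and then say ``iterating over $j=1,\dots,N$'' to conclude.
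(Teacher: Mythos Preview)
Your proof is correct and follows essentially the same approach as the paper: both arguments combine the classical $x$-independence of the Sturm--Liouville Wronskian on each subinterval with a direct verification at the junction $\ell_j$ that the transmission conditions \eqref{eqss:5}--\eqref{eqss:6} and \eqref{eqss:9}--\eqref{eqss:10} force the $M_j\la$ contributions to cancel, yielding $\Delta_{j-1}(\la)=\Delta_j(\la)$. The paper's write-up is slightly more compact (it substitutes all four relations in one line and reads off the cancellation), but the content is identical.
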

\begin{proof}
By the initial conditions
\eqref{eqss:5}-\eqref{eqss:6} and \eqref{eqss:9}-\eqref{eqss:10}, one gets  \begin{eqnarray*}
\Delta_{j-1}(\la)&=&\varphi_{j-1}\s_{j-1}\psi_{j-1}'(\ell_j)-
\psi_{j-1}\s_{j-1}\varphi_{j-1}'(\ell_j),\\
&=&\varphi_j(\ell_j)\(\s_j\psi_j'(\ell_j)+M_j\la\psi_j(\ell_j)\)-
\psi_j(\ell_j)\(\s_j\varphi_j'(\ell_j)+M_j\la\varphi_j(\ell_j)\),\\
&=&\Delta_j(\la),~j=1,...,N.
 \end{eqnarray*}
By Equations \eqref{eqss:3}, $\partial_x\Delta_j(\la)=0$, for
all $j\in\{0,...,N\}$. Thus, from the above, \eqref{eqsss:10} follows.
\end{proof}

\begin{proof}[Proof of Theorem \ref{wronskian}.]
We first argue by contradiction, so let
$\{\la,\underline{\p}(x,\la)\}$ be an eigenpair of Problem
$\(\mathcal{P}_N\)$ \eqref{eqs:8} and suppose that
${\Delta}_{j^*}(\la)\neq0$ for some $j^*\in\{0,...,N\}$. Under this assumption together with Lemma \ref{Lem2decroicarv}, it follows
\begin{equation}
\Delta_j(\la)\neq0,~j=0,...,N,\label{g10}\end{equation} and this implies that, $\varphi_j(x,\la)$ and $\psi_j(x,\la)$ are linearly
independent solutions of Equation \eqref{eqss:3} in each the
subintervals $\overline{\O}_j$, $j=0,...,N$. Consequently, any solution of the problem determined by Equations \eqref{eqss:3},
\eqref{eqss:9}-\eqref{eqss:10} may be
expressed as a linear combination of $\underline{\varphi}_N(x,\la)$ and
$\underline{\psi}_N(x,\la)$ for $x\in\overline{\O}$. Therefore the eigenfunction
$\underline{\p}(x,\la)$ of Problem
$\(\mathcal{P}_N\)$ \eqref{eqs:8} can be written in the form \begin{equation}
{\underline{\p}}:=\big(
C_j\varphi_j(x,\la)+{\widehat{C}_j}\psi_j(x,\la)\big)_{j=0}^N,~ x\in\overline{\O}_j,~j=0,...,N,\label{eqsss:1}\end{equation} for some  constants $C_j$ and $\widehat{C}_j.$ Substituting this expression into the first
two conditions of System $\(\mathcal{P}_N\)$ \eqref{eqs:8}, we get \begin{equation}
C_{j-1}\varphi_{j-1}(\ell_j)+\widehat{C}_{j-1}\psi_{j-1}(\ell_j)
=C_j\varphi_j(\ell_j)+{\widehat{C}_j}\psi_j(\ell_j),~
j=1,...,N,\label{eqsss:2}\end{equation} and \begin{eqnarray} \label{eqsss:3}M_{j}\la
\(C_{j}\varphi_{j}(\ell_j)+\widehat{C}_{j}\psi_{j}(\ell_j)\)&=&
\s_{j-1}\(C_{j-1}\varphi_{j-1}'(\ell_j)+\widehat{C}_{j-1}\psi_{j-1}'(\ell_j)\)\\\nonumber
&~&-\s_{j}\(C_j\varphi_j'(\ell_j)+{\widehat{C}_j}\psi_j'(\ell_j)\),~j=1,...,N.\end{eqnarray}
Using Conditions \eqref{eqss:5}-\eqref{eqss:6} and \eqref{eqss:9}-\eqref{eqss:10} in \eqref{eqsss:2}-\eqref{eqsss:3}, one has \begin{eqnarray} \(C_j-C_{j-1}\)\varphi_j(\ell_j)+\({\widehat{C}_j}-\widehat{C}_{j-1}\)
\psi_j(\ell_j)=0,&~j=1,...,N,\label{eqsss:4}\\
\(C_j-C_{j-1}\)\s_{j}\varphi_j'(\ell_j)+
\({\widehat{C}_j}-\widehat{C}_{j-1}\)\s_{j}\psi_j'(\ell_j)=0,&~j=1,...,N.
\label{eqsss:5}\end{eqnarray} By multiplying Equations
\eqref{eqsss:4} and \eqref{eqsss:5}, respectively, by
$\s_{j}\varphi_j'(\ell_j)$ and $\varphi_j(\ell_j)$, a simple
calculations yields $$
\(\widehat{C}_{j-1}-\widehat{C}_{j}\)\(\varphi_j\s_{j}\psi_j'(\ell_j)-\varphi_j'\s_{j}\psi_j(\ell_j)\)=
\(\widehat{C}_{j-1}-\widehat{C}_{j}\)\Delta_j(\la)=0,~
j=1,...,N.$$ Similarly, $$
\(C_{j}-C_{j-1}\)\(\varphi_j\s_{j}\psi_j'(\ell_j)-\varphi_j'\s_{j}\psi_j(\ell_j)\)=
\(C_{j}-C_{j-1}\)\Delta_j(\la)=0,~
j=1,...,N.$$ From
the above together with \eqref{g10}, one has \begin{equation} {C}_{j}=C_{j-1}
\hbox {  and  } \widehat{C}_{j-1}=\widehat{C}_{j},~j=1,...,N. \label{eqsss:6}\end{equation} From \eqref{eqss:4} and \eqref{eqss:8},  \begin{equation} \Delta_1(\la)=-\s_{1}(0)\psi_1(0,\la) \hbox{ and }
\Delta_N(\la)=\s_{N}(L)\varphi_N(L,\la).\label{eqsss:7}\end{equation}
Substituting \eqref{eqsss:1} into the last condition of \eqref{eqs:8} and
using \eqref{eqsss:7}, one gets $$ \widehat{C}_1
\psi_1(0,\la)=0=-\widehat{C}_1\frac{\Delta_1(\la)}{\s_{1}(0)}
\hbox{ and } C_N
\varphi_N(L,\la)=0=C_N\frac{\Delta_N(\la)}{\s_{N}(L)}. $$ From
this together with \eqref{eqs:5}, \eqref{g10} and \eqref{eqsss:6}, we get
$C_j=\widehat{C}_{j}=0$ for all $j\in\{0,...,N\}$. Thus from
\eqref{eqsss:1}, $\underline{\p}(x,\la)=0$, a contradiction.
Reciprocally, if ${\Delta}_{j}(\la)=0$ for all $j\in\{0,...,N\}$, then $\Delta_N(\la)=0$. This implies that,  \begin{equation}
\varphi_N(x,\la)=C\psi_N(x,\la),~x\in\overline{\O}_N,\label{eqsss:8}\end{equation} for some
constant $C\neq0$, where $\varphi_N(x,\la)$ and $\psi_N(x,\la)$ are defined by
\eqref{eqss:15} and \eqref{eqss:16}, respectively. Since
$\psi_N(L,\la)=0$, then from \eqref{eqsss:8}, the
 solution $\underline{\p}_N(x,\la)$ of Problem $\(\mathcal{P}_N^0\)$ \eqref{eqss:3}-\eqref{eqss:6}
 satisfies the boundary condition $\underline{\p}_N(L,\la)=0$. Thus by Theorem \ref{simple}, $\{\la, \underline{\p}_N(x,\la)\}$ is an eigenpair of Problem
$\(\mathcal{P}_N\)$ \eqref{eqs:8}.
The Theorem is proved.
\end{proof}
\subsection{Interlacing of eigenvalues and Weyl's formula }\label{Sec3-2}
In this subsection, we prove that all the eigenvalues $(\lambda_{n})_{n\in\N^*}$ of Problem $\(\mathcal{P}_{N}\)$ \eqref{eqs:8} interlace those of the $N+1$ decoupled
rods with homogeneous Dirichlet boundary conditions. As consequence, we establish the Weyl's formula \eqref{Eqs:7}. Set
\begin{equation}\Xi_N:=\{\mu_n^{N,D}\}_{1}^{\infty}=\bigcup_{j=0}^{N}
\{\widehat\mu_n^{j,D}\}_{1}^{\infty},\label{eqs:9}\end{equation}
where $\(\widehat\mu_n^{j,D}\)_{n\in\N^*},~
j=1,...,N,$ are the eigenvalues of the $N+1$ Direchlet subproblems
 \begin{eqnarray} \label{eqs:10}\left\{\begin{array}{ll}
 -(\s_{j}(x)\p)'+q_{j}(x)\p=\la \r_{j}(x)\p,~~x\in\O_j,& j=0,...,N, \\
\p(\ell_j)=\p(\ell_{j+1})=0.
 \end{array}\right.
 \end{eqnarray}
One has:
\begin{theorem}\label{interlcing} The eigenvalues $(\la_{n})_{n\in\mathbb{N}^{*}}$ and
$\(\mu_n^{N,D}\)_{n\in\N^*}$ interlace in the following sense:
\begin{equation} 0<\la_{1}\leq\mu^{N,D}_{1}~~\hbox{ and }~~\mu_{n}^{N,D}
\leq\la_{n+1}\leq\mu^{N,D}_{n+1},~\forall n\in\N^*.
 \label{eqs:11}
\end{equation} Moreover, the eigenvalues $(\la_{n})_{n\in\mathbb{N}^{*}}$  satisfy the Weyl's type asymptotic
formula: \begin{equation}
\label{Eqsss:29} \lim_{n\to\infty}\frac{\la_{n}}{n^2\pi^2}=
{\(\sum_{j=0}^{N}\int_{\Omega_j}
\sqrt{\frac{\r_j(x)}{\s_j(x)}}dx\)^{-2}}.\end{equation}
\end{theorem}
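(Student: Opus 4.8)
The plan is to prove the interlacing property \eqref{eqs:11} via the characteristic function, then deduce Weyl's formula \eqref{Eqsss:29} as a consequence of interlacing together with the classical asymptotics of the decoupled Dirichlet eigenvalues.

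First I would fix a convenient characteristic function. By Theorem~\ref{wronskian} and \eqref{eqsss:7}, the eigenvalues $(\la_n)$ are exactly the zeros of $F(\la):=\s_N(L)\varphi_N(L,\la)=\Delta_N(\la)$, which by Lemma~\ref{Lem2decroicarv} equals $\Delta_0(\la)=\dots=\Delta_N(\la)$; this $F$ is entire and real on the real axis. The key algebraic step is to express $F(\la)$ as a rational combination of the decoupled quantities: using the iteration formula \eqref{eqss:19} and the fact that on $\overline\O_j$ the pair $\widehat\varphi_j,\widehat\psi_j$ is a fundamental system with Wronskian $\Delta_j\equiv1$, one obtains a recursion
\[
\varphi_j(\ell_{j+1},\la)=\varphi_{j-1}(\ell_j,\la)\,\widehat\varphi_j(\ell_{j+1},\la)+\bigl(\s_{j-1}\varphi_{j-1}'(\ell_j,\la)-M_j\la\varphi_{j-1}(\ell_j,\la)\bigr)\,\widehat\psi_j(\ell_{j+1},\la),
\]
and similarly for $\s_j\varphi_j'(\ell_{j+1},\la)$. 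Dividing through by the product of the decoupled Dirichlet characteristic functions $\widehat\Delta_j^D(\la):=\widehat\psi_j(\ell_{j+1},\la)$ (whose zeros are the $\widehat\mu_n^{j,D}$), I would show that $G(\la):=F(\la)\big/\prod_{j=0}^N\widehat\Delta_j^D(\la)$ is a meromorphic function whose only poles are simple poles at the $\mu_n^{N,D}$ (with the expected multiplicity when some $\widehat\mu_n^{j,D}$ coincide), and whose logarithmic derivative $G'/G$ is negative — i.e. $G$ is a Herglotz/Nevanlinna-type function. This is the heart of the matter: the point-mass conditions contribute the term $-M_j\la$, and the monotonicity of $G$ in $\la$ follows from a Green's-identity computation showing that the relevant $\la$-derivatives of solutions have a fixed sign, exactly as in the classical theory of Sturm--Liouville problems with eigenvalue-dependent interface conditions.

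Once $G$ is shown to be monotone decreasing between consecutive poles, the interlacing \eqref{eqs:11} is immediate: between two consecutive elements of $\Xi_N$ (counted with multiplicity) the function $G$ runs monotonically from $+\infty$ to $-\infty$, hence has exactly one zero there, and below the smallest element of $\Xi_N$ it has at most one zero, which together with positivity of $\mathcal A$ (Lemma~\ref{operator}) gives $0<\la_1\le\mu_1^{N,D}$. I would also need to check the edge cancellations: if some $\widehat\mu_n^{j,D}$ is simultaneously a zero of $F$, it is both an eigenvalue $\la_m$ and a pole candidate, and the counting still works because the pole order of $G$ drops accordingly. Care must be taken when Dirichlet eigenvalues of different rods coincide (so $\Xi_N$ has a repeated value): then $G$ has a double pole and picks up an extra zero in the limit, which is consistent with the weak inequalities in \eqref{eqs:11}.

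Finally, Weyl's formula \eqref{Eqsss:29}. For each fixed $j$, the classical one-dimensional result gives $\lim_{n\to\infty}\widehat\mu_n^{j,D}/(n^2\pi^2)=(\o_j^*)^{-2}$ with $\o_j^*=\int_{\O_j}\sqrt{\r_j/\s_j}$, equivalently $\#\{n:\widehat\mu_n^{j,D}\le\la\}=\frac{\o_j^*}{\pi}\sqrt\la+o(\sqrt\la)$. Summing over $j$, the counting function of $\Xi_N$ satisfies $N_{\Xi_N}(\la)=\frac{\gamma}{\pi}\sqrt\la+o(\sqrt\la)$ with $\gamma=\sum_{j=0}^N\o_j^*$. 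The interlacing \eqref{eqs:11} forces $|\#\{n:\la_n\le\la\}-N_{\Xi_N}(\la)|\le 1$, so the counting function of $(\la_n)$ has the same leading asymptotics, and inverting gives $\la_n=\frac{n^2\pi^2}{\gamma^2}(1+o(1))$, which is \eqref{Eqsss:29}. The main obstacle, as indicated, is establishing the sign/monotonicity of $G$ (equivalently, that the Wronskian-type characteristic function is a Nevanlinna function in $\la$ despite the $-M_j\la$ terms in the interface conditions); the rest is bookkeeping with the counting functions and a standard appeal to Sturm--Liouville asymptotics.
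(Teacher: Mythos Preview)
Your approach is genuinely different from the paper's. You propose a direct argument: form the global quotient $G(\la)=\varphi_N(L,\la)/\prod_{j=0}^N\widehat\psi_j(\ell_{j+1},\la)$, show it is monotone between consecutive poles (the elements of $\Xi_N$), and read off the interlacing. The paper instead argues by \emph{induction on $N$}: it introduces the two Weyl functions $F_{N-1}(\la)=\s_{N-1}\varphi_{N-1}'(\ell_N)/\varphi_{N-1}(\ell_N)$ and $F_N(\la)=\s_N\psi_N'(\ell_N)/\psi_N(\ell_N)$, proves via a Green's identity (their Proposition~\ref{Decroi}) that $F_{N-1}$ is decreasing and $F_N$ is increasing on each pole-free interval, rewrites the characteristic equation as $F_N(\la)-F_{N-1}(\la)=-M_N\la$, and then uses the induction hypothesis $\mu_n^{N-1,D}\le\widehat\la_{n+1}\le\mu_{n+1}^{N-1,D}$ to pass from the pole set $\{\widehat\la_n\}\cup\{\widehat\mu_n^{N,D}\}$ to $\Xi_N$. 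Your counting-function derivation of Weyl's formula is cleaner than the paper's, which instead sandwiches $\la_n$ between $\mu_{n-1}^{N,D}$ and an auxiliary sequence $\la_n'$ (the $M_N=0$ eigenvalues) and invokes the induction hypothesis again.

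The gap in your proposal is the monotonicity of $G$. You say it ``follows from a Green's-identity computation showing that the relevant $\la$-derivatives have a fixed sign,'' but the Green's identity that actually comes for free (and that the paper uses) controls the Weyl quotient $\s\varphi'/\varphi$ at a \emph{single} interface, not the global quotient $G$. For $N=1$ one can check that $G$ decomposes as a sum of two decreasing Weyl functions minus $M_1\la$, so monotonicity is clear; for $N\ge2$ the recursion \eqref{eqss:19} couples the rods nontrivially and no such additive decomposition of $G$ is apparent. This is exactly why the paper splits off one rod at a time and inducts: Proposition~\ref{Decroi} only needs the Green's identity on the left block $[0,\ell_N]$ (for $F_{N-1}$) and on the single rod $\O_N$ (for $F_N$), and the induction hypothesis then converts the poles of $F_{N-1}$ (the $\widehat\la_n$) into the desired $\mu_n^{N-1,D}$. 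Your handling of multiple poles is also too optimistic: when two Dirichlet eigenvalues from non-adjacent rods coincide, $G$ can have a genuine double pole with $F$ nonvanishing there, and a strictly monotone function cannot have a double pole; this is another symptom of the fact that $G$ is not the right Nevanlinna object beyond $N=1$. If you want a direct (non-inductive) proof, the natural fix is to work not with $G$ but with the pair $F_{N-1},F_N$ at the last mass, which is precisely the paper's route.
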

To this end, let
$\(\widehat{\la}_n\)_{n\in\N^*}$ denote the eigenvalues of the
Problem $\(\mathcal{P}_{N-1}\)$ \eqref{eqs:8}(i.e., Problem \eqref{eqs:8}
with $N-1$ point masses). By virtue of Theorem \ref{simple}, all the
eigenvalues $\(\widehat{\la}_n\)_{n\in\N^*}$ are positive and simple: \begin{equation}
0<\widehat\la_{1}<\widehat\la_{2}<.......<\widehat\la_{n}<.....\underset{n\to
\infty}{\to}\infty .\label{eqsss:11}\end{equation}  Let
$\underline{\p}_{N}(x,\la)$ be the solution of the initial
problem \eqref{eqss:3}-\eqref{eqss:6} constructed in
Lemma \ref{Unqunss}. Let us introduce the variable complex function
\begin{equation}
F_{N-1}(\la)=\dfrac{\s_{N-1}(\ell_N)\varphi_{N-1}'(\ell_N,\la)}{\varphi_{N-1}(\ell_N,\la)},
~\la\in\(-\infty,
\widehat{\la}_{1}\)\bigcup\left\{\bigcup_{n=0}^\infty
\(\widehat{\la}_n,\widehat{\la}_{n+1}\)\right\},
\label{eqsss:12}\end{equation} where $\varphi_{N-1}(x,\la)$ is the restriction of
$\underline{\p}_{N}(x,\la)$ to the subinterval
$\overline{\O}_{N-1}$. From \eqref{eqsss:11}, $F_{N-1}(\la)$ is well-defined
on all the intervals $\(-\infty,
\widehat{\la}_{1}\)$ and $\(\widehat{\la}_{n}, \widehat{\la}_{n+1}\)$,
$n\in\N^*$. In view of Lemma \ref{Unqunss}, $F_{N-1}(\la)$ is a meromorphic function. Moreover, the
poles of the function $F_{N-1}(\la)$ are the eigenvalues
$\(\widehat{\la}_n\)_{n\in\N^*}$ of Problem $\(\mathcal{P}_{N-1}\)$
\eqref{eqs:8} and their zeros are eigenvalues of the problem \begin{eqnarray*}
\left\{\begin{array}{ll}
 -(\s_{j}(x)\p_j')'+q_{j}(x)\p_j=\la \r_{j}(x)\p_j,~~x\in \O_j,&j=0,...,N-1, \\
\p_{j-1}(\ell_{j})=\varphi_{j}(\ell_{j}),& j=1,...,N-1, \\
\s_{j-1}\varphi_{j-1}'(\ell_{j})-\s_{j}\varphi_{j}'(\ell_{j})=M_{j}\la \p_{j-1}(\ell_{j}),& j=1,...,N-1,\\
\p_0(\ell_0)=\p_0(0)=0,~ \p_{N-1}'(\ell_{N})=0.
 \end{array}\right.\end{eqnarray*}
Similarly, let $\underline{\psi}_N(x,\la)$ be the solution of the
initial problem \eqref{eqss:3}, \eqref{eqss:8}-\eqref{eqss:10} constructed in
Lemma \ref{Unqunss}, and let us consider the meromorphic function \begin{equation}
F_{N}(\la)=\dfrac{\s_{N}(\ell_N)\psi_{N}'(\ell_N,\la)}{\psi_{N}(\ell_N,\la)}
,
~\la\in\(-\infty,
\widehat{\mu}^{N,D}_1\)\bigcup\left\{\bigcup_{n=0}^\infty
\(\widehat{\mu}^{N,D}_n,\widehat{\mu}^{N,D}_{n+1}\)\right\},~n\in\N^*,
\label{eqsss:13}\end{equation}
where $\psi_N(x,\la)$ is the restriction of
$\underline{\psi}_N(x,\la)$ to the subinterval
$\overline{\O}_N$. Obviously, the poles of the function
$F_{N}(\la)$ are the eigenvalues $\(\widehat
\mu_n^{N,D}\)_{n\in\N^*}$ of the Dirichlet problem \eqref{eqs:10} on
the subinterval ${\O}_N$, while, their zeros are
eigenvalues of the problem \begin{eqnarray*} \left\{\begin{array}{ll}
 -(\s_{N}(x)\p)'+q_{N}(x)\p=\la \r_{N}(x)\p,~~x\in {\O}_N, \\
\p'(\ell_N)=\p(\ell_{N+1})=0.
 \end{array}\right.
 \end{eqnarray*}
\begin{proposition}\label{Decroi}
{\bf(a)} $F_{N-1}(\la)$ is a
decreasing function along each of the intervals $\(-\infty,
\widehat{\la}_{1}\)$ and
 $\(\widehat{\la}_{n}, \widehat{\la}_{n+1}\)$, $n\in\N^*$. Furthermore, it decreases from $+\infty$ to $-\infty$.\\
{\bf(b)} $F_{N}(\la)$ is an increasing function from $-\infty$
to $+\infty$ along each of the intervals $\(-\infty, \widehat \mu_{1}^{N,D}\)$ and
 $\(\widehat \mu_{n}^{N,D}, \widehat \mu_{n+1}^{N,D}\)$, $n\in\N^*$.
\end{proposition}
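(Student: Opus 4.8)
The plan is to deduce both monotonicity statements from the sign of the $\la$-derivatives $\partial_\la F_{N-1}$ and $\partial_\la F_N$, which I will write as a Lagrange-type bracket of the relevant solution and its $\la$-derivative and then propagate across the rods and the point masses. For part (a), fix $\la$ in the domain of $F_{N-1}$, and for $j=0,\dots,N-1$ let $\varphi_j(\cdot,\la)$ denote the restriction of $\underline{\varphi}_N(\cdot,\la)$ to $\overline{\O}_j$; set
\begin{equation*}
W_j(x)=\s_j(x)\big(\varphi_j(x,\la)\,\partial_\la\varphi_j'(x,\la)-\varphi_j'(x,\la)\,\partial_\la\varphi_j(x,\la)\big),\qquad x\in\overline{\O}_j .
\end{equation*}
Differentiating Equation \eqref{eqss:3} with respect to $\la$ and using the classical Lagrange identity (cf. \cite[Chapter 1]{TITCHMARSH}) gives $W_j'(x)=-\r_j(x)\,\varphi_j^{2}(x,\la)$ on each $\overline{\O}_j$. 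Since the initial conditions \eqref{eqss:4} do not depend on $\la$, one has $\varphi_0(0)=\partial_\la\varphi_0(0)=0$, whence $W_0(0)=0$. Differentiating the transmission conditions \eqref{eqss:5}--\eqref{eqss:6} in $\la$ and substituting them into $W_j(\ell_j)$ — a short computation in which the two terms carrying the factor $M_j\la$ cancel — produces the jump relation $W_j(\ell_j)=W_{j-1}(\ell_j)-M_j\,\varphi_{j-1}^{2}(\ell_j,\la)$ for $j=1,\dots,N-1$. Integrating $W_j'=-\r_j\varphi_j^{2}$ over each $\O_j$ and summing the jumps yields
\begin{equation*}
W_{N-1}(\ell_N)=-\sum_{j=0}^{N-1}\int_{\O_j}\r_j(x)\,\varphi_j^{2}(x,\la)\,dx-\sum_{j=1}^{N-1}M_j\,\varphi_{j-1}^{2}(\ell_j,\la)<0,
\end{equation*}
strict because $\varphi_0(\cdot,\la)\not\equiv0$ (as $\s_0(0)\varphi_0'(0,\la)=1$). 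Since $\partial_\la F_{N-1}(\la)=W_{N-1}(\ell_N)/\varphi_{N-1}^{2}(\ell_N,\la)$, the function $F_{N-1}$ is strictly decreasing on every interval of its domain.

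\textbf{Endpoint values for $F_{N-1}$.} The numerator and denominator of $F_{N-1}$ are entire in $\la$ by Lemma \ref{Unqunss} and have no common zero — that would force $\varphi_{N-1}(\cdot,\la)\equiv0$, impossible since $\s_0(0)\varphi_0'(0,\la)=1$ — so each $\widehat{\la}_n$ is a genuine pole and $|F_{N-1}(\la)|\to\infty$ as $\la$ approaches any $\widehat{\la}_n$; together with strict decrease this forces $F_{N-1}$ to run from $+\infty$ to $-\infty$ across each $(\widehat{\la}_n,\widehat{\la}_{n+1})$ and to tend to $-\infty$ as $\la\to\widehat{\la}_1^-$. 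For the last limit, write $\la=-s^2$: the solution asymptotics of Proposition \ref{Aymptotics1x} and \eqref{eqss:25}--\eqref{eqss:26} (with $\sin,\cos$ turning into $\sinh,\cosh$), or equivalently a direct induction over the masses using \eqref{eqss:6} and $M_j>0$, show that $\s_j\varphi_j'/\varphi_j$ at $\ell_{j+1}$ stays positive and blows up as $s\to\infty$; hence $F_{N-1}(\la)\to+\infty$ as $\la\to-\infty$. This proves (a).

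\textbf{The function $F_N$.} The argument for (b) is the same with $\psi_N$ in place of $\varphi_{N-1}$, the simplification being that $\O_N=(\ell_N,L)$ carries no point mass, so there are no jump terms. With $W_N(x)=\s_N(x)\big(\psi_N\,\partial_\la\psi_N'-\psi_N'\,\partial_\la\psi_N\big)(x,\la)$ on $\overline{\O}_N$, the $\la$-independence of \eqref{eqss:8} gives $W_N(L)=0$, and $W_N'=-\r_N\psi_N^{2}$ integrates to $W_N(\ell_N)=\int_{\ell_N}^{L}\r_N(x)\,\psi_N^{2}(x,\la)\,dx>0$; hence $\partial_\la F_N(\la)=W_N(\ell_N)/\psi_N^{2}(\ell_N,\la)>0$ and $F_N$ is strictly increasing on each interval of its domain. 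As before the poles of $F_N$ (the zeros of $\psi_N(\ell_N,\cdot)$, which by $\psi_N(L,\cdot)=0$ are precisely the Dirichlet eigenvalues $\widehat{\mu}_n^{N,D}$ on $\O_N$) are genuine, so $|F_N|\to\infty$ there, while $F_N(-s^2)\to-\infty$ as $s\to\infty$ by the corresponding asymptotics of $\psi_N$; together with monotonicity, $F_N$ increases from $-\infty$ to $+\infty$ across each of the stated intervals, which proves (b).

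\textbf{Where the difficulty lies.} Everything reduces to the jump relation $W_j(\ell_j)=W_{j-1}(\ell_j)-M_j\varphi_{j-1}^{2}(\ell_j,\la)$: this comes from differentiating the $\la$-dependent transmission conditions \eqref{eqss:5}--\eqref{eqss:6} and verifying that the $M_j\la$-terms cancel exactly, leaving the negative contribution $-M_j\varphi_{j-1}^{2}(\ell_j,\la)$. It is precisely this sign, together with $M_j>0$, that makes each successive point mass drive $W_{N-1}(\ell_N)$ further negative instead of spoiling the monotonicity of $F_{N-1}$; the remaining ingredients (the Lagrange identity, the $\la$-independence of the endpoint data, and the behaviour at the poles and at $-\infty$) are routine Sturm--Liouville bookkeeping.
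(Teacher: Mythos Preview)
Your proposal is correct and follows essentially the same approach as the paper: both derive the sign of $\partial_\la F_{N-1}$ via the Lagrange identity propagated across the rods and masses, then handle the endpoint behaviour using the pole structure and the asymptotics of Proposition~\ref{Aymptotics1x}. The only cosmetic difference is that the paper works with the two-parameter Wronskian $\widehat{\Delta}_j(x)=\varphi_j(x,\la)\s_j\varphi_j'(x,\mu)-\varphi_j'(x,\la)\s_j\varphi_j(x,\mu)$ and passes to the limit $\mu\to\la$, whereas you differentiate directly in $\la$; your jump relation $W_j(\ell_j)=W_{j-1}(\ell_j)-M_j\varphi_{j-1}^{2}(\ell_j,\la)$ is precisely the infinitesimal version of the paper's $\widehat{\Delta}_{j-1}(\ell_j)=\widehat{\Delta}_j(\ell_j)+M_j(\mu-\la)\varphi_j(\ell_j,\la)\varphi_j(\ell_j,\mu)$, and your treatment of part~(b) is in fact more explicit than the paper's ``in a same way''.
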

\begin{proof} Let
$\underline{\p}_{N}(x,\la)$ be the solution of the initial
problem $\(\mathcal{P}_{N}^0\)$ \eqref{eqss:3}-\eqref{eqss:6} constructed in
Lemma \ref{Unqunss}. Let us first prove that \begin{eqnarray}{\partial_\la}F_{N-1}(\la) =
\dfrac{-1}{\varphi_{N-1}^2(\ell_N,\la)}\(
\sum_{j=0}^{N-1}\int_{{\O}_j}\varphi_{j}^2(x,\la)dx
+\sum_{j=1}^{N-1}M_{j}\varphi_j^2(\ell_{j},\la)\),\label{eqsss:14}\end{eqnarray}
where $\varphi_j(x,\la)$ are given by \eqref{eqss:15}. To this end, let $\la, \mu\in\(-\infty,
\widehat{\la}_{1}\)$ or $\(\la, \mu\in\(\widehat{\la}_{n},
\widehat{\la}_{n+1}\)\)$, and let us denote by
$$\widehat{\Delta}_j(x)=\varphi_j(x,\la)\s_j(x)\varphi_j'(x,\mu)-\varphi_j'(x,\la)\s_j(x)\varphi_j(x,\mu)
,~x \in\overline{\O}_j,~j=0,...N-1, $$
where $\la\not=\mu$. By
\eqref{eqss:3}, \begin{eqnarray*}\widehat{\Delta}_j'(x)&=&
\varphi_j(x,\la)(\s_{j}\varphi_j')'(x,\mu)-(\s_{j}\varphi_j')'(x,\la)\varphi_j(x,\mu)\\
&=& \(q_j(x)-\mu\r_j(x)\)\varphi_j(x,\la)\varphi_j(x,\mu)-
\(q_j(x)-\la\r_j(x)\)\varphi_j(x,\la)\varphi_j(x,\mu)\\
&=&(\la-\mu)\r_j(x)\varphi_j(x,\la)\varphi_j(x,\mu),~x
\in\overline{\O}_j,~j=0,...N-1, \end{eqnarray*} and this implies
that \begin{equation} \sum_{j=0}^{N-1} \(\widehat{\Delta}_j(\ell_{j+1})
-\widehat{\Delta}_j(\ell_{j})\)=(\la-\mu)\sum_{j=0}^{N-1}
\int_{{\O}_j}\varphi_j(x,\la)\varphi_j(x,\mu)dx.\label{eqsss:15}\end{equation}
From \eqref{eqss:4}, $\widehat{\Delta}_0(\ell_{0})=0$, and then, by
\eqref{eqsss:15}, it follows \begin{equation}\label{eqsss:16} \widehat{\Delta}_{N-1}(\ell_{N})=
\sum_{j=1}^{N-1}\(\widehat{\Delta}_{j}(\ell_{j})-\widehat{\Delta}_{j-1}(\ell_{j})\)+
(\la-\mu)\sum_{j=1}^{N-1}\int_{{\O}_j}\varphi_j(x,\la)\varphi_j(x,\mu)dx.
\end{equation} Using the initial conditions
\eqref{eqss:5}-\eqref{eqss:6}, one gets \begin{eqnarray*}
\widehat{\Delta}_{j-1}(\ell_{j})&=&
\varphi_{j-1}(\ell_{j},\la)\s_{j-1}(\ell_j)\varphi_{j-1}'(\ell_{j},\mu)-\s_{j-1}(\ell_j)\varphi_{j-1}'(\ell_{j},\la)\varphi_{j-1}(\ell_{j},\mu)
\nonumber\\
&=&
\varphi_{j}(\ell_{j},\la)\(\s_{j}\varphi_{j}'+M_{j}\mu\varphi_{j}\)(\ell_{j},\mu)-
\(\s_{j}\varphi_{j}'+M_{j}\la\varphi_{j}\)(\ell_{j},\la)\varphi_i(\ell_{j},\mu)\nonumber\\
&=&\widehat{\Delta}_{j}(\ell_{j})
+M_{j}(\mu-\la)\varphi_{j}(\ell_{j},\la)\varphi_{j}(\ell_{j},\mu),~j=1,...N-1,\end{eqnarray*} and this implies that,
\begin{equation*} \sum_{j=1}^{N-1}
\(\widehat{\Delta}_{j}(\ell_{j})-\widehat{\Delta}_{j-1}
(\ell_{j})\)=(\la-\mu)\sum_{j=1}^{N-1}M_{j}\varphi_{j}(\ell_{j},\la)
\varphi_{j}(\ell_{j},\mu).\end{equation*}
From this together with \eqref{eqsss:16}, one has \begin{alignat}{2}
\label{eqsss:17}{\frac{\widehat{\Delta}_{N-1}(\ell_{N})}{\s_{N-1}(\ell_N)}}=&
\textstyle{\varphi_{N-1}(\ell_N,\la)\dfrac{\varphi_{N-1}'(\ell_N,\mu)-
\varphi_{N-1}'(\ell_N,\la)}{\la-\mu}-
\varphi_{N-1}'(\ell_N,\la)\dfrac{\varphi_{N-1}(\ell_N,\mu)-
\varphi_{N-1}(\ell_N,\la)}{\la-\mu}}\\
=&\dfrac{1}{\s_{N-1}(\ell_N)}\(\sum_{j=0}^{N-1}\int_{\O_{j}}
\varphi_{j}(x,\la)\varphi_{j}(x,\mu)dx
+\sum_{j=1}^{N-1}M_{j}\varphi_j(\ell_{j},\la)\varphi_j(\ell_{j},\mu)\).\nonumber
\end{alignat}
Thus, passing to the limit as $\mu \rightarrow \la$ in
\eqref{eqsss:17} and dividing both sides by
$\varphi_{N-1}^2(\ell_N,\la)$, we get \eqref{eqsss:14}. We now prove that \begin{equation} \lim_{\la
\to-\infty}F_{N-1}(\la ) = + \infty.\label{eqsss:18}\end{equation}
Let $\la=-|\nu|^2$, where $\nu\in\R^*$.
By Lemma \ref{Aymptotics1x}, one has \begin{equation} F_{N-1}(\la )\sim
\frac{i|\nu|\cos\(i|\nu|\o_{N-1}^*\)[1]}{\xi_{N-1}^2(\ell_N)\sin\(i|\nu|\o_{N-1}^*\)[1]},
~\hbox{as}~|\nu|\to\infty,\label{eqsss:19}\end{equation}
where $[1]=1+\mathcal{O}\(\frac{1}{|\nu|}\)$, $\sqrt{i}=-1$ is the imaginary unit,
 $\xi_{N-1}$
and $\omega_{N-1}^*$ are defined by \eqref{eqss:21} and  \eqref{eqss:22}, respectively.
Since, $\sin(i|\nu|) = i \sinh(|\nu|)$ and $\cos(i|\nu|) = \cosh(|\nu|)$, then
by \eqref{eqsss:19}, one gets
$$F_{N-1}(\la )\sim
\frac{|\nu|}{\xi_{N-1}^2(\ell_N)},\hbox{as}~|\nu|\to\infty,$$
and this proves \eqref{eqsss:18}.
On the other hand, the poles
$\(\widehat{\la}_n\)_{n\in\N^*}$ and the zeros of function
$F_{N-1}(\la)$ do not coincide, since otherwise,
$\widehat{\la}_n$ would be an eigenvalue of Problem
$\(\mathcal{P}_{N-1}\)$ \eqref{eqs:8} for which
$\varphi_{N-1}(\ell_N,\widehat{\la}_n)=\varphi_{N-1}'(\ell_N,\widehat{\la}_n)=0$
, a contradiction. From this together with \eqref{eqsss:14}, it follows  \begin{equation*}
\lim_{\la
\to\widehat{\la}^{-}_n}F_{N-1}(\la ) = - \infty   ~~\hbox{ and }
\lim_{\la \to \widehat{\la}^{+}_n}F_{N-1}(\la ) = + \infty,
~n\in\N^*,\end{equation*} and this ends the proof of the statement {\bf (a)} of Proposition \ref{Decroi}. The second statement of the proposition can
be proved in a same way. The proof is complete.
\end{proof}

We are now ready to prove Theorem \ref{interlcing}.
\begin{proof} Let $\(\widehat{\la}_n\)_{n\in\N^*}$ are the eigenvalues the problem
$\(\mathcal{P}_{N-1}\)$ \eqref{eqs:8} (i.e., Problem \eqref{eqs:8}
with $N-1$ point masses). Set $
{\Gamma}:=\{\gamma_n\}_1^\infty=
\left\{\widehat\mu_n^{N,D}\right\}_1^\infty\bigcup\left\{\widehat{\la}_n\right\}_1^\infty,
$
where $\(\widehat
\mu_n^{N,D}\)_{n\in\N^*}$ are the eigenvalues of the Dirichlet subproblem
\eqref{eqs:10} for $x\in{\O}_N$. Since $\(\widehat{\la}_n\)_{n\in\N^*}$ and $\(\widehat
\mu_n^{N,D}\)_{n\in\N^*}$ are simple, then ${\Gamma}$ has a decomposition ${\Gamma}={\Gamma^*}\bigcup{\Gamma^+}$, where \begin{equation}
\Gamma^*:=\{\gamma_n^*,~\hbox{for some }n\in\N^*\}=\{\gamma_n\in\Gamma~:
~\widehat{\la}_j=\widehat\mu_k^{N,D}~~~\hbox{for
some}~j,k\in\mathbb{N^{*}}\} \label{eqsss:20} \end{equation}
and \begin{equation} \Gamma^+:=\{\gamma_n^+\}_1^{\infty}=\Gamma\backslash\Gamma^*~~:~~ 0<\gamma_1^+<\gamma_2^+<.......<\gamma_n^+<.....\underset{n\rightarrow
\infty}{\longrightarrow}\infty. \label{eqsss:21}\end{equation} We prove
\eqref{eqs:11}-\eqref{Eqsss:29} by induction. It is known \cite[Corollary 3.4]{Hedi2018}, that
the eigenvalues
 $(\la_n)_{n\in\N^*}$ of Problem $\(\mathcal{P}_{1}\)$ \eqref{eqs:8} satisfy:
\begin{align} \label{eqsss:22}
0<\la_{1}\leq\mu_{1}^{1,D}~~,~~
 \mu_{n}^{1,D}\leq\la_{n+1}\leq\mu_{n+1}^{1,D},~\forall n\in\N^*,
\end{align} and  \begin{align}\label{eqss::71}
\lim_{n\to\infty}\frac{\la_{n}}{n^2\pi^2}=
{\(\sum_{j=0}^{1}\int_{\O_j}\sqrt{\frac{\r_j(x)}{\s_j(x)}}dx\)^{-2}},
\end{align}
where $\mu_n^{2,D}\in\Xi_1$, and $\Xi_1$ is defined by
$\eqref{eqs:9}$. This means that \eqref{eqs:11}-\eqref{Eqsss:29} hold in the case
$N=1$. Assume that \eqref{eqsss:22}-\eqref{eqss::71} hold for $j\leq N-1$, then by
induction hypothesis, the eigenvalues
 $(\widehat{\la}_{n})_{n\in\N^*}$ of Problem $\(\mathcal{P}_{N-1}\)$ \eqref{eqs:8} satisfy:
 \begin{equation}
 0<\widehat\la_{1}\leq\mu_{1}^{N-1,D}~~,~~\mu_{n}^{N-1,D}
 \leq\widehat\la_{n+1}\leq\mu_{n+1}^{N-1,D},~\forall n\in\N^*,
 \label{eqsss:23}
\end{equation} and \begin{equation} \label{eqsss:43}
\lim_{n\to\infty}\frac{\widehat{\la}_n}{n^2\pi^2}=
{\(\sum_{j=0}^{N-1}\int_{\O_j}\sqrt{\frac{\r_j(x)}{\s_j(x)}}\)^{-2}},\end{equation}

where $\mu_n^{N-1,D}\in\Xi_{N-1},$ and $\Xi_{N-1}$ is defined by
$\eqref{eqs:9}$. First, we prove the interlacing formula \eqref{eqs:11}. In view of Theorem \ref{wronskian},
\begin{equation}\varphi_N(\ell_N,\la_n)\s_N(\ell_N)\psi'_N(\ell_N,\la_n)-
\varphi'_N(\ell_N,\la_n)\s_N(\ell_N)\psi_N(\ell_N,\la_n)=0,~n\in\N^*,\label{eqsss:24}\end{equation}
where $\varphi_N(x,\la)$ and $\psi_N(x,\la)$ are respectively given by \eqref{eqss:15} and \eqref{eqss:16}.
Then, we have only examine the following cases:\\
{\bf{Case} $1.$} If $\varphi_N(\ell_N,\la_n)\not=0$, $n\in\N^*$, then by
\eqref{eqsss:24}, we get $\psi_N(\ell_N,\la_n)\not=0$. This means that
 \begin{equation} \la_n\in\Pi:=\left\{\(-\infty,
\gamma_1^+\)\bigcup\left\{\bigcup_{n=0}^\infty
\(\gamma_n^+,\gamma_{n+1}^+\)\right\},~\gamma_n^+\in
\Gamma^+,~n\in\N^*\right\},\label{eqsss:25}\end{equation}
where $\Gamma^+$ is defined by \eqref{eqsss:21}. From
\eqref{eqss:5}-\eqref{eqss:6} and \eqref{eqsss:24}, one obtains
\begin{alignat}{1}\s_N(\ell_N)\varphi_{N-1}\psi'_N(\ell_N,\la_n)-
\s_{N-1}(\ell_N)\psi_N\varphi_{N-1}'(\ell_{N},\la_n)+M_{N}\la_n
\psi_N\varphi_{N-1}(\ell_{N},\la_n)=0,\label{eqsss:26}
     \end{alignat} which can be rewritten in the form \begin{equation}
F_N(\la_n)-{F}_{N-1}(\la_n)=-M_{N}\la_n,~\la_n\in\Pi,\label{eqsss:27}\end{equation} where $F_{N-1}(\la)$ and $ F_{N}(\la)$ are respectively
given by \eqref{eqsss:12} and \eqref{eqsss:13}. By \eqref{eqsss:25}, we have
  $\gamma_n^+\neq \gamma_{n+1}^{+}$, $n\in\N^*$. By
Proposition \ref{Decroi},
$F_{N}(\la)-{F}_{N-1}(\la)$ is an increasing function
from $-\infty$ to $+\infty$ along each of the intervals $\(-\infty,
\gamma_1^+\)$ and
$\(\gamma_n^+, \gamma_{n+1}^{+}\)$, $n\in\N^*$. Clearly,
that the solution of the equation \eqref{eqsss:27} are the eigenvalues
 $(\la_n)_{n\in\N^*}$ of Problem $\(\mathcal{P}_{N}\)$ \eqref{eqs:8}.
  Moreover, if
 $F_{N}(\la')-{F}_{N-1}(\la')$  for some $\la'$, then $\la'$ is an eigenvalue of the problem $\(\mathcal{P}_{N}\)$ \eqref{eqs:8} for $M_N = 0$.
 Consequently,
 from the curves of the functions
$F_{N}(\la)-{F}_{N-1}(\la)$ and $-M_{N}\la$, one has
\begin{equation} 0< \la_{1}< \la_{1}'<\gamma_{1}^{+}~~\hbox{ and
}~~\gamma_{n}^{+}<\la_{n+1}<\la_{n+1}'<\gamma_{n+1}^{+},~n\in\N^*,\label{eqsss:28}\end{equation}
where  $(\la_n')_{n\in\N^*}$ are the eigenvalues of Problem $\(\mathcal{P}_{N}\)$ \eqref{eqs:8}
 for $M_N = 0$. Since $\gamma_n^+\neq \gamma_{n+1}^{+}$, then by \eqref{eqsss:21}, we have
  $$\gamma_{n}^{+}=\widehat{\la}_j ~\hbox{ and  }~ \gamma_{n+1}^{+}=\widehat
\mu_k^{N,D}~~ (\hbox{or } \gamma_{n}^{+}= \widehat
\mu_k^{N,D} \hbox{ and  } \gamma_{n+1}^{+}=\widehat{\la}_j),~j,k\in\N^*.$$ We may assume
without loss of generality that in \eqref{eqsss:28},
$\gamma_{n}^{+}=\widehat{\la}_j \hbox{ and } \gamma_{n+1}^{+}=\widehat
\mu_k^{N,D}.$ Then, by \eqref{eqs:9}, \eqref{eqsss:23} and \eqref{eqsss:28}, one gets
\begin{equation} 0< \la_{1}< \la_{1}'<\mu_1^{N,D}~~\hbox{ and
}~~\mu_n^{N,D}<\la_{n+1}<\la_{n+1}'<\mu_{n+1}^{N,D},~n\in\N^*,\label{eqsss:45}\end{equation}
with $\mu_n^{N,D}= \mu_{n}^{N-1,D}$
and  $\mu_{n+1}^{N,D}= \widehat\mu_{k}^{N-1,D}.$  This ends the proof of \eqref{eqs:11} in this case.\\
{\bf{Case} $2.$} If $\varphi_N(\ell_N,\la_n)=0$, for some $n\in\N^*$, then by
\eqref{eqsss:24} , we have $\psi_N(\ell_N,\la_n)=0$. This implies $\la_n\in\Gamma^*$, where
$\Gamma^*$ is defined by \eqref{eqsss:20}. Consequently, $\la_n$, $n\in\N^*$,
is simultaneously an eigenvalue of the Dirichlet subproblem \eqref{eqs:10} for $x\in\O_N$ and Problem $\(\mathcal{P}_{N-1}\)$
\eqref{eqs:8}. Thus from \eqref{eqsss:20}
and \eqref{eqsss:23}, one has \begin{equation} \la_{n+1}=\gamma_{n+1}^{*}=\widehat\mu_{k}^{N,D}=\la_{n+1}',
\hbox{ for some } n\in\N^*, \label{eqsss:46}\end{equation}
and then \eqref{eqs:11} follows in this case. This completes the proof of interlacing formula \eqref{eqs:11}. By \eqref{eqsss:43}, the eigenvalues
$\({\la}'_{n}\)_{n\in\N^*}$ of the problem $\(\mathcal{P}_{N}\)$
\eqref{eqs:8} for $M_{N}=0$ satisfy the asymptote  \begin{equation} \label{eqsss:43+}
\frac{{\la'}_n}{n^2\pi^2}=\lim_{\ell_{N}\to L}\frac{\widehat{\la}_n(\ell_N)}{n^2\pi^2}=
{\(\sum_{j=0}^{N-1}\int_{\O_j}\sqrt{\frac{\r_j(x)}{\s_j(x)}}
dx+\int_{\ell_{N-1}}^{L}\sqrt{\frac{\r(x)}{\s(x)}}dx\)^{-2}},\end{equation}
 where
 $(\widehat{\la}_{n})_{n\in\N^*}$ are the eigenvalues of Problem $\(\mathcal{P}_{N-1}\)$ \eqref{eqs:8}, $${\r(x)}=\begin{cases}
                                    {\r_{N-1}(x)}, & x\in \overline{\O}_{N-1}, \\
                                    {\r_N(x)}, & x\in
                                     \overline{\O}_{N},
                                  \end{cases}~~\hbox{ and } ~~{\s(x)}:=\begin{cases}
                                    {\s_{N-1}(x)}, & x\in \overline{\O}_{N-1}, \\
                                    {\s_N(x)}, & x\in
                                     \overline{\O}_{N}.
                                  \end{cases}$$
From \eqref{eqsss:45}-\eqref{eqsss:46}, we have
\begin{equation} \mu_{n-1}^{N,D} \leq\la_{n}\leq\la_{n}'\leq \mu_{n}^{N,D}\leq\la_{n+1}\leq\la'_{n+1}\leq\mu_{n+1}^{N,D},~n\in\N^*,
 \label{eqsss:44}
\end{equation}
and then, by \eqref{eqsss:43+} we get the Weyl's asymptotic formula \eqref{Eqsss:29}. The proof is complete.
\end{proof}
\subsection{Sharp asymptotics of the eigenvalues and spectral gap}\label{Sec3-3}
In this subsection, we establish sharp asymptotic estimates for eigenvalues $(\la_{n})_{n\in\mathbb{N}^{*}}$ of Problem $\(\mathcal{P}_{N}\)$ \eqref{eqs:8}. As consequence, we prove that the spectral gap
"$\big{|}{\la_{n+1}}-{\la_{n}}\big{|}"$ is uniformly
positive. Namely, we enunciate the following result:
\begin{theorem}\label{Weyl} Set $\Lambda^*=\{n+1 : \mu_n^{N,D}=\lambda_{n+1}=\mu_{n+1}^{N,D} \hbox{ for some } n\in\N^*\}$, and let \begin{equation*}\label{q1v5}
Q_{j}^*:=\dfrac{(\ell_{j+1}-\ell_{j})^2}{\gamma_j^2}\ds\int_{\Omega_{j}}
\({\rho_{j}^{-1}}{q_{j}(x)}
-\{{\rho_{j}^{-3}}{\sigma_{j}}(x)\}^{\frac{1}{4}}
\left[\sigma_{j}(x)\left(\{\sigma_{j}\rho_{j}(x)\}^{-\frac{1}{4}}\right)'\right]'\)dx,~ j=0,...,N.
\end{equation*}
Then, the set of eigenvalues $\{\la_n\}_{n\in\N^*}$ of Problem $\(\mathcal{P}_{N}\)$ \eqref{eqs:8} is asymptotically splits into $N+1$ branches $\{\lambda_{n}^j\}_{n\in\N^*}$, $j=0,...,N$, such that:\\
{\bf(a)} for large $n+1\in\Lambda^*$,
\be \sqrt{\lambda_{n+1}^j}=\dfrac{(n+1)\pi}{\o_j^*}
+\dfrac{Q_j^*}{2(n+1)}+\mathcal{O}\(\frac{1}{n^2}\),~j=0,...,N,\label{Manssoura}\ee
{\bf(b)} for large $n+1\in\N^*\backslash\Lambda^*,$ one has the asymptotes:\be \label{eqsss:47} \begin{cases}
\sqrt{\lambda_{n+1}^j}=&\dfrac{n\pi}{\o_j^*}+\dfrac{Q_j^*}{2n} +\dfrac{\gamma\xi_{j}^2(\ell_{j+1})}{M_{j+1}\o_j^*n\pi}+\mathcal{O}\(\frac{1}{n^2}\),~j=0,...,N-1, \\
\sqrt{\lambda_{n+1}^N}=&\dfrac{n\pi}{\o_N^*}+\dfrac{Q_N^*}{2n} +\dfrac{\gamma\xi_{N}^2(\ell_{N})}{M_{N}\o_N^*n\pi}+\mathcal{O}\(\frac{1}{n^2}\),
\end{cases} \ee
where
the quantities
$\xi_j,$ $\gamma$ and $\o_j^*$ are respectively given in \eqref{eqss:21} and  \eqref{eqss:22}.\\
Moreover, \be
{\la_{n+1}}-{\la_{n}}\geq2\gamma\min_{j=0,...,N-1}\left\{\dfrac{\xi_{j}^2(\ell_{j+1})}{M_{j+1}{\o_j^*}^2},\dfrac{\xi_{N}^2
  (\ell_{N})}{M_{N}{\o_N^*}^2}\right\},\hbox{ as }n\to\infty\label{eqsss:30}.
\ee
\end{theorem}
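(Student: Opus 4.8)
The plan is to reduce everything to the characteristic equation and then run a perturbative (contraction-mapping) analysis of its zeros. By Theorem~\ref{simple} (equivalently Theorem~\ref{wronskian} with \eqref{eqsss:7}), $\lambda=\nu^{2}$ is an eigenvalue of Problem $\(\mathcal{P}_N\)$ \eqref{eqs:8} if and only if $\varphi_{N}(L,\lambda)=0$, so the task is to locate the zeros of the entire function $\nu\mapsto\varphi_{N}(L,\nu^{2})$. The first step would be to sharpen Proposition~\ref{Aymptotics1x}: the expansions \eqref{eqss:23}--\eqref{eqss:24} are only accurate to order $[1]=1+\mathcal O(1/|\nu|)$, whereas $Q_{j}^{*}$ in \eqref{Manssoura}--\eqref{eqsss:47} lives at the next order. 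I would replace the one-term asymptotics \eqref{eqss:25}--\eqref{eqss:26} of $\widehat\varphi_{j},\widehat\psi_{j}$ by the classical \emph{two}-term Sturm--Liouville asymptotics (e.g.\ \cite[Ch.~1]{TITCHMARSH},~\cite[Ch.~1]{LEVITAN}); after the Liouville change of variable $x\mapsto\o_{j}(x)$ their $1/\nu$-coefficients carry exactly the mean of the transformed potential $\r_{j}^{-1}q_{j}-\{\r_{j}^{-3}\s_{j}\}^{1/4}[\s_{j}(\{\s_{j}\r_{j}\}^{-1/4})']'$, which is what produces $Q_{j}^{*}$. Propagating these two-term expansions through the iteration formula \eqref{eqss:19}, exactly as in the proof of Proposition~\ref{Aymptotics1x}, would then give a refined expansion of $\varphi_{N}(L,\lambda)$ with explicit $\mathcal O(1/\nu^{2})$ remainder; this is long but purely mechanical bookkeeping.

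The second step is the branch decomposition and the index matching. By the interlacing \eqref{eqs:11} of Theorem~\ref{interlcing} together with the elementary asymptotics $\sqrt{\widehat\mu_{n}^{j,D}}=\tfrac{n\pi}{\o_{j}^{*}}+\mathcal O(1/n)$ of the decoupled Dirichlet problems \eqref{eqs:10}, each $\sqrt{\lambda_{n+1}}$ is trapped between two consecutive terms of the merged family $\big(\sqrt{\widehat\mu_{n}^{j,D}}\big)_{0\le j\le N,\ n\ge1}$ and hence lies within $\mathcal O(1/n)$ of a unique point $m\pi/\o_{j}^{*}$ for exactly one $j$; declaring that $\lambda_{n+1}\in\{\lambda_{n}^{j}\}$ in this case defines the $N+1$ branches. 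Counting zeros via the Weyl formula \eqref{Eqsss:29} relates the global index $n+1$ to the branch index $m$ through the total optical length $\gamma=\sum_{j=0}^{N}\o_{j}^{*}$, which is how $\gamma$ enters \eqref{eqsss:47}--\eqref{eqsss:30}.

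The third and central step is the contraction argument. Fixing a branch $j$ and an integer $m$, I would set $\nu\o_{j}^{*}=m\pi+\varepsilon$, insert the refined expansion of $\varphi_{N}(L,\nu^{2})$ into $\varphi_{N}(L,\nu^{2})=0$, and, after dividing by the non-vanishing prefactor of \eqref{eqss:23}, obtain a scalar equation $\varepsilon=\mathcal{R}_{m}(\varepsilon)$ with $\mathcal{R}_{m}$ a contraction on a ball of radius $\asymp 1/m$. Two regimes appear. In the generic case $n+1\notin\Lambda^{*}$, $m\pi/\o_{j}^{*}$ is, up to $\mathcal O(1/m)$, not of the form $p\pi/\o_{k}^{*}$ for any $k\ne j$, so $\prod_{k\ne j}\sin(\nu\o_{k}^{*})$ stays bounded away from $0$; the dominant balance is between the leading product and the term carrying $M_{j+1}^{-1}$ for $j\le N-1$ (resp.\ $M_{N}^{-1}$ for $j=N$), and solving yields the three terms of \eqref{eqsss:47}, namely the $\tfrac{Q_{j}^{*}}{2n}$ contribution from the two-term Sturm--Liouville asymptotics and the coupling correction $\tfrac{\gamma\,\xi_{j}^{2}(\ell_{j+1})}{M_{j+1}\o_{j}^{*}n\pi}$ (resp.\ $\tfrac{\gamma\,\xi_{N}^{2}(\ell_{N})}{M_{N}\o_{N}^{*}n\pi}$). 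In the coincidence case $n+1\in\Lambda^{*}$, $p\pi/\o_{k}^{*}=m\pi/\o_{j}^{*}$ for some $k\ne j$, i.e.\ a Dirichlet eigenvalue is shared by two subintervals; by \eqref{eqs:11} this value must itself be an eigenvalue of $\(\mathcal{P}_N\)$ \eqref{eqs:8}, so $\lambda_{n+1}=\mu_{n}^{N,D}=\mu_{n+1}^{N,D}$, the coupling term cancels, and only the Dirichlet correction survives, giving \eqref{Manssoura}. (The base case $N=1$ is \cite[Cor.~3.4]{Hedi2018}; alternatively one can induct on the number of masses as in Theorem~\ref{interlcing}, using the Weyl functions $F_{N-1},F_{N}$ of Proposition~\ref{Decroi} and the identity $F_{N}-F_{N-1}=-M_{N}\lambda$.)

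Finally, \eqref{eqsss:30} would follow from \eqref{Manssoura}--\eqref{eqsss:47} by writing $\lambda_{n+1}-\lambda_{n}=(\sqrt{\lambda_{n+1}}-\sqrt{\lambda_{n}})(\sqrt{\lambda_{n+1}}+\sqrt{\lambda_{n}})$ with $\sqrt{\lambda_{n}}\sim n\pi/\gamma$. If $\lambda_{n}$ and $\lambda_{n+1}$ lie in the same branch they are $\asymp n$ apart and the bound is trivial for large $n$; the binding configuration is a $\Lambda^{*}$-point adjacent to a generic eigenvalue of a neighbouring branch, where the expansions force $|\sqrt{\lambda_{n+1}}-\sqrt{\lambda_{n}}|\ge\tfrac{\gamma\,\xi_{j}^{2}(\ell_{j+1})}{M_{j+1}\o_{j}^{*}\,n\pi}(1+o(1))$ for the relevant $j$ (resp.\ the $j=N$ variant), and multiplying by $\sqrt{\lambda_{n+1}}+\sqrt{\lambda_{n}}\sim 2n\pi/\gamma$ and minimising over $j$ gives exactly \eqref{eqsss:30}. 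The hard part will be the uniformity of this last step: when two Dirichlet eigenvalues from different subintervals are within $o(1/n)$ of each other without being equal, which can occur for badly approximable ratios $\o_{j}^{*}/\o_{k}^{*}$, the contraction in step three must be carried out around the resulting cluster rather than around a single unperturbed point, and one has to establish a genuine \emph{level-repulsion} estimate (as in the constant-coefficient string case of Hansen--Zuazua \cite{HansenZ}) showing that the point mass prevents two nearby coupled eigenvalues from collapsing faster than the rate $1/n$. Making this repulsion quantitative, uniformly in $n$ and in the arithmetic of the $\o_{j}^{*}$, is the technical crux.
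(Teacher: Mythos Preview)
Your proposal is sound in broad outline but takes a heavier route than the paper. The main difference is \emph{where} you evaluate the characteristic equation and \emph{how} you obtain the $Q_j^*$ term.

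You propose to work with $\varphi_N(L,\nu^2)=0$ directly and, in order to see $Q_j^*$, to push the Sturm--Liouville asymptotics \eqref{eqss:25}--\eqref{eqss:26} to two terms and then propagate them through the full iteration \eqref{eqss:19}. The paper avoids both of these steps. By Theorem~\ref{wronskian} the Wronskian $\Delta_j(\lambda)$ vanishes at \emph{every} interior node $\ell_{j+1}$, not only at $x=L$; this gives, for each branch $j$, the ratio identity
\[
\frac{\varphi_j(\ell_{j+1},\lambda_n^j)}{\sigma_j\varphi_j'(\ell_{j+1},\lambda_n^j)}
=\frac{\psi_j(\ell_{j+1},\lambda_n^j)}{\sigma_j\psi_j'(\ell_{j+1},\lambda_n^j)}
\qquad(\text{and the analogous identity at }\ell_N\text{ for }j=N).
\]
Taking this ratio kills the prefactors $\prod_k M_k$, $\Upsilon_{j-1}$ and $\prod_{k<j}\sin(\nu\omega_k^*)$ in \eqref{eqss:23}--\eqref{eqss:24} simultaneously, so the one--term asymptotics of Proposition~\ref{Aymptotics1x} already yield the clean scalar relation $\tan(\sqrt{\lambda_n^j}\,\omega_j^*)\asymp c_j/\sqrt{\lambda_n^j}$ (equation~\eqref{eqsss:48R} in the paper). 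The $Q_j^*$ contribution is then obtained \emph{for free}: one writes $\sqrt{\lambda_{n+1}^j}=\sqrt{\widehat\mu_n^{j,D}}+\kappa_n^j$ with the \emph{exact} Dirichlet eigenvalue $\widehat\mu_n^{j,D}$ of \eqref{eqs:10}, invokes the classical asymptotic \eqref{eqsss:50} for $\sqrt{\widehat\mu_n^{j,D}}$ (this is where the Liouville transform and $Q_j^*$ enter, once and locally), and solves the resulting equation $\tan(\kappa_n^j\omega_j^*)\asymp c_j/\sqrt{\lambda_n^j}$ for $\kappa_n^j$. No global two--term expansion of $\varphi_N(L,\cdot)$ and no contraction--mapping machinery are needed.

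What your approach buys is a more self-contained argument that does not rely on quoting \eqref{eqsss:50}; what the paper's approach buys is a dramatic reduction of bookkeeping, since the only two--term information required is the standard Dirichlet asymptotic on a single interval. Your concern about level repulsion for nearly--coincident Dirichlet eigenvalues is legitimate; the paper sidesteps it by the same interior--ratio trick (the potentially small factors $\sin(\nu\omega_k^*)$, $k\ne j$, never appear in the reduced equation) together with the interlacing $\lambda_{n+1}^j-\lambda_n^j\ge\lambda_{n+1}^j-\widehat\mu_n^{j,D}$ from Theorem~\ref{interlcing}, which makes the gap estimate \eqref{eqsss:30} follow directly from \eqref{eqsss:47} without any cluster analysis.
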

\begin{remark}
 It should be noted that if $n\in\Lambda^*$, then at most $N+1$ of the eigenvalues $\(\widehat\mu_n^{j,D}\)_{n\in\N^*}$ of
the $N+1$ Dirichlet subproblems \eqref{eqs:10} can coincide. This follows from the simplicity of the eigenvalues $\(\widehat\mu_n^{j,D}\)_{n\in\N^*},$ $j = 0,...,N$. For example, in the case of constant coefficients $\rho_j\equiv\sigma_j\equiv1$, $q_j\equiv0$, $\ell_j^*=\ell_{j+1}-\ell_{j}$, and $\ell_j^*=\ell_{j+1}^*$ for all $j\in\{0,...,N\}$. In particular, in this case $\Lambda^*=\N^*$.
Conversely, if ${\frac{\ell_j^*}{\ell_{k}^*}}\in\R_+^*\backslash\mathbb{Q}$ for all $j,k\in\{0,...,N\}$ with $j\not=k$, then
$\Lambda^*\equiv\emptyset.$
\end{remark}
\begin{proof} From the interlacing theorem \ref{interlcing}, we deduce that
 between two consecutive eigenvalues $\mu_{n}$ and $\mu_{n+1}$ there is only one eigenvalue $\la_n$ of Problem $\(\mathcal{P}_{N}\)$ \eqref{eqs:8}. Consequently by  \eqref{eqs:9}, the set of eigenvalues $\{\la_n\}_{n\in\N^*}$ may be decomposed as:
\begin{equation} \{\la_n\}_{n\in\N^*}=\left\{\ds\bigcup_{j=0}^{N}\{\lambda_{n}^j\}_{n\in\N^*}~:~
\sqrt{\la_{n+1}^j}=\sqrt{\widehat\mu_{n}^{j,D}}+\kappa_{n}^j,~j=0,...,N\right\},
\label{eqsss:46v5v4abed}\end{equation}
$\hbox{ for some sequences} \(\kappa_n^j\)_{n\in\N^*}\geq0$. Let $n+1\in\Lambda^*$,
 i.e., $\mu_n^{N,D}=\la_{n+1}=\mu_{n+1}^{N,D}$, then by \eqref{eqsss:46v5v4abed},
 we get   \be\sqrt{\la_{n+1}^j}=\sqrt{\widehat\mu_{n+1}^{j,D}},~j=0,...,N.\label{ja4}\ee Using the modified Liouville transformation (e.g., \cite[Chapter 1]{LEVITAN}),
$$t=\dfrac{\ell_{j+1}-\ell_{j}}{\gamma_j}\int_{\ell_j}^{x}
\sqrt{\frac{\rho_{j}(t)}{\sigma_{j}(t)}}dt +\ell_j ~~\hbox{ and }~~
\hat\phi(t)=(\rho_{j}\sigma_{j}(x))^{\frac{1}{4}}{\phi}(x),~ j=0,...,N.$$
Problem  \eqref{eqs:10} can be written in the
following form
\begin{equation}\label{w'1cvn,}
\left\{
\begin{array}{lll}
-{\hat\phi}''+Q_{j}(t)\hat\phi=\dfrac{\gamma_j^2}{(\ell_{j+1}-\ell_{j})^2}\lambda \hat\phi, ~~t\in\O_j,~ j=0,...,N,\\
\hat\phi(\ell_{j})=\hat\phi(\ell_{j+1})=0,
\end{array}
\right.
\end{equation}
where
\begin{equation*}
Q_{j}:=\dfrac{(\ell_{j+1}-\ell_{j})^2}{\gamma_j^2}\({\rho_{j}^{-1}}{q_{j}}
-\{{\rho_{j}^{-3}}{\sigma_{j}}\}^{\frac{1}{4}}
\left[\sigma_{j}\left(\{\sigma_{j}\rho_{j}\}^{-\frac{1}{4}}\right)'\right]'\),~ j=0,...,N.
\end{equation*}
It is known
(e.g., \cite[Chapter 1]{LEVITAN} and \cite[Chapter
1]{TITCHMARSH}), that the eigenvalues
$\(\widehat\mu_n^{j,D}\)_{n\in\N^*}$ of the $N+1$ Dirichlet subproblems \eqref{w'1cvn,} satisfy the asymptotics \begin{eqnarray}
\label{eqsss:50}
   \sqrt{\widehat\mu_n^{j,D}}=\frac{n\pi}
   {{\o_{j}^*}}+\frac{1}{2n}\ds\int_{\Omega_{j}}Q_{j}(x)d(x)+\mathcal{O}\(\frac{1}{n^2}\),~j=0
   ,...,N.
\end{eqnarray}
Therefore \eqref{Manssoura} is a simple deduction from \eqref{ja4} and \eqref{eqsss:50}. Now, let $n+1\in\N^*\backslash\Lambda^*,$ i.e., $\mu_{n}^{N,D}\not=\mu_{n+1}^{N,D}$. Then by Theorem \ref{interlcing}, \eqref{Manssoura} and  \eqref{eqsss:46v5v4abed}, one has
\begin{equation} \sqrt{\la_{n+1}^j}=\(\dfrac{n\pi}{\o_j^*} +\dfrac{Q_j^*}{2n}+\mathcal{O}\(\frac{1}{n^2}\)\)+\kappa_n^j \hbox{ with } \(\kappa_n^j\)_{n\in\N^*}>0,~j=0,...,N.
\label{eqsss:46v5v4}\end{equation}
 By Proposition \ref{Aymptotics1x}, \eqref{eqss:26} (for $j=0$) and \eqref{eqss:27}, we have
\begin{equation}\begin{cases}\label{eqsss:V4x22}
\dfrac{{\varphi}_j(\ell_{j+1}, \la)}{\s_{j}{\varphi}_j'(\ell_{j+1}, \la)}=\dfrac{\xi_j^2(\ell_{j+1})\sin(\sqrt{\la}\o_{j}^*)
}{\sqrt{\la}\cos(\sqrt{\la}\o_{j}^*)}[1],\\
\dfrac{\varphi_N(\ell_N,\la)}{\s_N\varphi_N'(\ell_N,\la)}=-\dfrac{1}{M_N\lambda}[1],
~j=0,...,N-1,
\end{cases}\end{equation}
where ${\varphi}_j(x, \la)$ are given in \eqref{eqss:15}. Similarly, let $\underline{\psi}_N(x,\la)$ be the solution of the initial
problem \eqref{eqss:3}, \eqref{eqss:8}-\eqref{eqss:10} constructed in
Lemma \ref{Unqunss}. Then by \eqref{eqss:8},
\begin{equation}\begin{cases}\label{eqsspsi:26}
\dfrac{\psi_N(\ell_N,\la)}{\xi_N(\ell_N)\xi_N(\ell_{N+1})}=\dfrac{\sin(\sqrt{\la}\o_N^*)}
{\sqrt{\la}}[1],\\
\dfrac{\xi_N(\ell_{N+1})}{\s_N{\psi}_N'(\ell_N,\la)}=- \dfrac{\xi_N(\ell_N)}{\cos(\sqrt{\la}\o_N^*)}[1],
\end{cases}\end{equation}
and by \eqref{eqss:9}-\eqref{eqss:10}, one has
\begin{equation}\label{eqssV4:23}
\dfrac{{\psi}_j(\ell_{j+1}, \la)}{\s_{j}{\psi}_j'(\ell_{j+1}, \la)}=\dfrac{1}{M_{j+1}\lambda}[1],~j=0,...,N-1,\end{equation}
where ${\psi}_j(x, \la)$ are given by \eqref{eqss:16}. From Theorem \ref{wronskian} and \eqref{eqsss:46v5v4abed}, it follows
\begin{equation}\label{eqsss:V4x24} \begin{cases}
                  \dfrac{\varphi_j(\ell_{j+1}, \la_n^j)}{\s_j(\ell_{j+1})\varphi'_j(\ell_{j+1}, \la_n^j)}=
                  \dfrac{\psi_j(\ell_{j+1}, \la_n^j)}{\s_j(\ell_{j+1})\psi'_j(\ell_{j+1}, \la_n^j)},~j=0,...,N-1,\\
                  \dfrac{\varphi_N(\ell_{N}, \la_n^N)}{\s_N(\ell_{N})\varphi'_N(\ell_{N}, \la_n^N)}=
                  \dfrac{\psi_N(\ell_{N}, \la_n^N)}{\s_N(\ell_{N})\psi'_N(\ell_{N}, \la_n^N)},
                \end{cases}
\end{equation}
where $\varphi_N(x,\la)$ and $\psi_N(x,\la)$ are respectively given by \eqref{eqss:15} and \eqref{eqss:16}. Therefore, from \eqref{eqsss:V4x22}-\eqref{eqssV4:23} and \eqref{eqsss:V4x24}, we get
\begin{equation}\label{eqsss:48R} \begin{cases}
                  \dfrac{\sin(\sqrt{\la_n^j}
\o_{j}^*)}{\cos(\sqrt{\la_n^j}\o_{j}^*)}[1]=\dfrac{\xi_{j}^2(\ell_{j+1})}{M_{j+1}\sqrt{\la_n^j}}[1] ,~j=0,...,N-1,\\
\dfrac{\sin(\sqrt{\la_n^N}\o_{N}^*)}{\cos(\sqrt{\la_n^N}\o_{N}^*)}[1]= \dfrac{\xi_N^2(\ell_N)}{M_N\sqrt{\la_n^N}}[1],
                \end{cases}
\end{equation}
where
the quantities
$\xi_j$, $\gamma$ and $\o_j^*$ are respectively given in \eqref{eqss:21} and  \eqref{eqss:22}. Hence by \eqref{eqsss:46v5v4} and \eqref{eqsss:48R},
\begin{equation}\label{eqsss:48Rbassem}
 \begin{cases}
                  \dfrac{\sin(\kappa_n^j
\o_{j}^*)}{\cos(\kappa_n^j\o_{j}^*)}[1]=\dfrac{\xi_{j}^2(\ell_{j+1})}{M_{j+1}\sqrt{\la_n^j}}[1] ,~j=0,...,N-1,\\
\dfrac{\sin(\kappa_n^N\o_{N}^*)}{\cos(\kappa_n^N\o_{N}^*)}[1]= \dfrac{\xi_N^2(\ell_N)}{M_N\sqrt{\la_n^N}}[1].
                \end{cases}
\end{equation}
It is easy to see that $\kappa_n^j\to0,\hbox{ as }n\to\infty, ~j=0,...,N.$
Therefore, by the Weyl's formula \eqref{Eqsss:29}, \eqref{eqsss:46v5v4} and \eqref{eqsss:48Rbassem}, we get the asymptotes \eqref{eqsss:47}. Now, we  prove the gap condition \eqref{eqsss:30}. By Theorem \ref{simple}, the eigenvalues $(\la_{n})_{n\in\mathbb{N}^{*}}$ of Problem
$\(\mathcal{P}_N\)$ \eqref{eqs:8} are simple, and this implies that, if $n\in\Lambda^*$, then $n+1\in\N^*\backslash\Lambda^*$ (or conversely). Hence, we shall examine the following cases:\\
{\it Case $1$.} If $n\in\Lambda^*,$ then $n+1\in\N^*\backslash\Lambda^*$. Thus by \eqref{Manssoura}-\eqref{eqsss:47}, one has $$
\sqrt{\lambda_{n+1}^j}+\sqrt{\widehat\mu_n^{j,D}}\sim \dfrac{2n\pi}{\o_j^*} ~~\hbox{ and }
\begin{cases}
\sqrt{\lambda_{n+1}^j}-\sqrt{\widehat\mu_n^{j,D}}\sim
\dfrac{\gamma\xi_{j}^2(\ell_{j+1})}{M_{j+1}\o_j^*n\pi},~j=0,...,N-1,\\
\sqrt{\lambda_{n+1}^N}-\sqrt{\widehat\mu_n^{N,D}}
\sim\dfrac{\gamma\xi_{N}^2(\ell_{N})}{M_{N}\o_j^*n\pi},\hbox{ as }n\to\infty.
                \end{cases}
$$
Consequently,  \be\label{eqsss:47V0}
{\lambda_{n+1}^j}-{\widehat\mu_n^{j,D}} \sim\dfrac{2\gamma\xi_{j}^2(\ell_{j+1})}{M_{j+1}{\o_j^{*}}^2}~~ \hbox{ and }~~ {\lambda_{n+1}^N}-{\widehat\mu_n^{N,D}}\sim \dfrac{2\gamma\xi_{N}^2
  (\ell_{N})}{M_{N}{\o_N^*}^2},\hbox{ as }n\to\infty,~j=0,...,N-1.
\ee
{\it Case $2$.} If $n\in\N^*\backslash\Lambda^*,$ then $n+1\in\N^*\backslash\Lambda^*$ or $n+1\in\Lambda^*$. Clearly, if $n+1\in\N^*\backslash\Lambda^*$, then  \eqref{eqsss:47V0} is satisfied. Now, let $n+1\in\Lambda^*$.
By \eqref{Manssoura},  \be\sqrt{\lambda_{n+1}^j}=\sqrt{\widehat\mu_n^{j,D}}=\dfrac{n\pi}{\o_j^*} +\dfrac{Q_j^*}{2n}+\mathcal{O}\(\frac{1}{n^2}\),~j=0,...,N.\label{Maneqsss:47V0}\ee
Since $n\in\N^*\backslash\Lambda^*$, then there exist sequences $\(\kappa_n^j\)_{n\in\N^*}>0$ such that $\sqrt{\la_{n}^j}=\sqrt{\widehat\mu_{n}^{j,D}}-\kappa_{n}^j,~j=0,...,N.$ As above, one has $$ \begin{cases}
\sqrt{\lambda_{n}^j}=&\dfrac{n\pi}{\o_j^*}+\dfrac{Q_j^*}{2n} -\dfrac{\gamma\xi_{j}^2(\ell_{j+1})}{M_{j+1}\o_j^*n\pi}+\mathcal{O}\(\frac{1}{n^2}\),~j=0,...,N-1, \\
\sqrt{\lambda_{n}^N}=&\dfrac{n\pi}{\o_N^*}+\dfrac{Q_N^*}{2n} -\dfrac{\gamma\xi_{N}^2(\ell_{N})}{M_{N}\o_j^*n\pi}+\mathcal{O}\(\frac{1}{n^2}\).
\end{cases} $$
From this and \eqref{Maneqsss:47V0}, it follows
\be\label{kais}
{\lambda_{n+1}^j}-{\lambda_{n}^j} =\dfrac{2\gamma\xi_{j}^2(\ell_{j+1})}{M_{j+1}{\o_j^{*}}^2}~~ \hbox{ and }~~ {\lambda_{n+1}^j}-{\lambda_{n}^j}= \dfrac{2\gamma\xi_{N}^2
  (\ell_{N})}{M_{N}{\o_N^*}^2},\hbox{ as }n\to\infty,~j=0,...,N-1.
\ee
By the interlacing theorem \ref{interlcing}, we have  ${\la_{n+1}^j}-{\la_{n}^j}\geq {\lambda_{n+1}^j}-{\widehat\mu_n^{j,D}}$, $j=0,...,N$. Thus, from \eqref{eqsss:47V0} and \eqref{kais}, one gets \begin{align*}
  \la_{n+1}-\la_{n}& \geq \min_{j=0,...,N}\left\{{\lambda_{n+1}^j}-{\lambda_{n}^j}\right\} \\
  &\geq2\gamma\min_{j=0,...,N-1}\left\{\dfrac{\xi_{j}^2(\ell_{j+1})}{M_{j+1}{\o_j^*}^2},\dfrac{\xi_{N}^2
  (\ell_{N})}{M_{N}{\o_N^*}^2}\right\},\hbox{ as }n\to\infty.
\end{align*}
The proof is complete. \end{proof}
 In the next result, we establish the equivalence between the $\mathcal{H}$-norm of the eigenfunctions
$\({\Phi}_n\)_{n\in\N^*}$ and their first derivative at the right end $x=L$ .
\begin{proposition}\label{Lem1asymptotics210v4equi} Let $\({\Phi}_n\)_{n\in\N^*}$ be
 the sequence of eigenfunctions
 of Problem
$\(\mathcal{P}_N\)$ \eqref{eqs:8} constructed in
Theorem \ref{simple}. One has:
\be  \dfrac{\left\|{\Phi}_n\right\|_{\mathcal{H}}}{\left|\s_{N}(L)\varphi_{N}'(L, \la_n)\right|}
\sim\sqrt{\dfrac{\o_N^*}{{2}}}\dfrac{\gamma\xi_N(L)}
{{n\pi}},\hbox{ as }n\to\infty.\label{eqsssV4kais:31}\ee
where
the quantities
$\xi_N$, $\gamma$ and $\o_N^*$ are respectively given in \eqref{eqss:21} and  \eqref{eqss:22}.
\end{proposition}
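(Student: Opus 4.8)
The plan is to reduce the quotient to a single scalar and then insert the spectral asymptotics of Section~\ref{Sect3}. The key observation I would use is that the eigenspace at $\la_n$ is one--dimensional. Since $\varphi_N(L,\la_n)=0=\psi_N(L,\la_n)$ and $\sigma_N(L)\psi_N'(L,\la_n)=-1$ by \eqref{eqss:8}, we get $\varphi_N(\cdot,\la_n)=c_n\psi_N(\cdot,\la_n)$ on $\overline\O_N$ with $c_n=-\sigma_N(L)\varphi_N'(L,\la_n)$; propagating this identity leftwards through the interface relations \eqref{eqss:5}--\eqref{eqss:6} (which are identical for the $\varphi$-- and $\psi$--families) and using uniqueness for the equation \eqref{eqss:3}, one obtains $\varphi_j(\cdot,\la_n)=c_n\psi_j(\cdot,\la_n)$ on every $\overline\O_j$. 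Consequently
\begin{equation*}
\frac{\|\Phi_n\|_{\mathcal H}^2}{\bigl|\sigma_N(L)\varphi_N'(L,\la_n)\bigr|^2}
=\sum_{j=0}^{N}\int_{\O_j}\rho_j(x)\,\psi_j^2(x,\la_n)\,dx+\sum_{j=1}^{N}M_j\,\psi_j^2(\ell_j,\la_n).
\end{equation*}
(Alternatively, a two--parameter Wronskian argument exactly as in the proof of Proposition~\ref{Decroi}, letting $\mu\to\la_n$, yields the norming identity $\|\Phi_n\|_{\mathcal H}^2=\sigma_N(L)\varphi_N'(L,\la_n)\,\partial_\la\varphi_N(L,\la_n)$, so the quotient also equals $\partial_\la\!\bigl[\varphi_N(L,\la)/(\sigma_N(L)\varphi_N'(L,\la))\bigr]_{\la=\la_n}$; I would use whichever form is more convenient.)

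Next I would isolate the leading term $\int_{\O_N}\rho_N\psi_N^2(x,\la)$. Applying the modified Liouville transform to $\psi_N$ on $\overline\O_N$ as in \eqref{eqss:26}, but now with the Cauchy data $\psi_N(L)=0$, $\sigma_N(L)\psi_N'(L)=-1$ prescribed at the right end, one gets $\psi_N(x,\la)=\xi_N(L)\xi_N(x)\,\dfrac{\sin\!\bigl(\sqrt\la\,(\omega_N^*-\omega_N(x))\bigr)}{\sqrt\la}\,[1]$ as $|\la|\to\infty$, uniformly in $x\in\overline\O_N$. Since $\rho_N\xi_N^2=\sqrt{\rho_N/\sigma_N}=-\tfrac{d}{dx}\bigl(\omega_N^*-\omega_N(x)\bigr)$, the substitution $u=\omega_N^*-\omega_N(x)$ together with the averaging $\langle\sin^2\rangle=\tfrac12$ gives
\begin{equation*}
\int_{\O_N}\rho_N(x)\,\psi_N^2(x,\la)\,dx\sim\frac{\xi_N^2(L)\,\omega_N^*}{2\,\la},\qquad|\la|\to\infty,
\end{equation*}
uniformly, and independently of which branch $\la$ belongs to; inserting the Weyl asymptotics $\sqrt{\la_n}\sim n\pi/\gamma$ of Theorem~\ref{interlcing} already reproduces the right--hand side of the claimed equivalence.

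It then remains to show that all the other terms are negligible, i.e. $\sum_{j=0}^{N-1}\int_{\O_j}\rho_j\,\psi_j^2(x,\la_n)\,dx+\sum_{j=1}^{N}M_j\,\psi_j^2(\ell_j,\la_n)=o(1/\la_n)$. Writing $\psi_j=\varphi_j/c_n$, this amounts to $\int_{\O_j}\rho_j\varphi_j^2(x,\la_n)\,dx=o\!\bigl(\la_n^{-1}(\sigma_N(L)\varphi_N'(L,\la_n))^2\bigr)$ for $j<N$, and analogously for the point--mass terms. Here I would feed in the sharp asymptotics \eqref{eqss:23}--\eqref{eqss:24} of Proposition~\ref{Aymptotics1x} for $\varphi_j$ and for $\varphi_N'(L,\cdot)$, the characteristic equation of Theorem~\ref{wronskian} in the form \eqref{eqsss:V4x24}, and the branch decomposition of Theorem~\ref{Weyl}: the function $\varphi_N$ carries the cumulative amplification $\la_n^{(N-1)/2}\prod_{k=1}^{N-1}M_k$ built up by the iteration \eqref{eqss:19}, whereas each $\varphi_j$ with $j<N$ is smaller by a genuine power of $\sqrt{\la_n}$ once the eigenvalue equation has been imposed --- on an eigenvalue lying near the branch $j_0$ the vanishing factor $\sin(\sqrt{\la_n}\,\omega_{j_0}^{*})$ forced by \eqref{eqsss:47} supplies precisely the extra gain needed to beat $(\sigma_N(L)\varphi_N'(L,\la_n))^2$. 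This last step is the main obstacle: the bound must be uniform in $n$, and near an eigenvalue close to a Dirichlet mode of an interior rod $\O_{j_0}$, $j_0<N$, the naive estimates for $\psi_j$, $j<N$, are too crude, so one has to extract the cancellations enforced by the transmission and characteristic equations --- the same mechanism visible in the branch estimates \eqref{eqsss:47} and in $\varphi_N(\ell_N,\la_n)/(\sigma_N\varphi_N'(\ell_N,\la_n))=\mathcal O(1/\la_n)$. Once these estimates are in place, combining the three steps with $\sqrt{\la_n}\sim n\pi/\gamma$ gives $\|\Phi_n\|_{\mathcal H}/|\sigma_N(L)\varphi_N'(L,\la_n)|\sim\sqrt{\omega_N^*/2}\;\gamma\,\xi_N(L)/(n\pi)$.
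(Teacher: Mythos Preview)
Your reformulation via $\varphi_j(\cdot,\la_n)=c_n\,\psi_j(\cdot,\la_n)$ with $c_n=-\sigma_N(L)\varphi_N'(L,\la_n)$ is correct and pleasant, but the paper takes a more direct route that largely sidesteps the ``main obstacle'' you flag.

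The paper never switches to the $\psi$--family. It inserts the asymptotics \eqref{eqss:23}--\eqref{eqss:24} (together with \eqref{eqss:26}--\eqref{eqss:27} for $j=0,1$) straight into $\|\varphi_j\|_{L^2_{\rho_j}}^2$, obtaining for each $j\geq 1$ a leading factor $\nu^{2(j-1)}\bigl(\prod_{i=0}^{j-1}\sin(\nu\omega_i^*)\bigr)^2$, so that $\|\Phi_n\|_{\mathcal H}^2$ is dominated by the top piece $\|\varphi_N\|_{L^2_{\rho_N}}^2$ (and likewise the point--mass sum by $M_N\varphi_N^2(\ell_N)$, which is one order lower still). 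The \emph{same} amplification factor $\nu^{2N}\bigl(\prod_{i=0}^{N-1}\sin(\nu\omega_i^*)\bigr)^2$ shows up in $|\sigma_N(L)\varphi_N'(L,\la_n)|^2$ via \eqref{eqss:24}, hence cancels in the quotient, leaving $\omega_N^*\xi_N^2(L)\big/\bigl(2\la_n\cos^2(\sqrt{\la_n}\,\omega_N^*)\bigr)$. Then $|\sin(\sqrt{\la_n}\,\omega_N^*)|\leq C/n$ from \eqref{Manssoura}--\eqref{eqsss:47} gives $\cos^2\to 1$, and Weyl's formula \eqref{Eqsss:29} concludes.

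What your route buys is a clean interpretation of the quotient as a single norm $\|\Psi_n\|_{\mathcal H}^2$, together with the norming--constant identity $\|\Phi_n\|_{\mathcal H}^2=\sigma_N(L)\varphi_N'(L,\la_n)\,\partial_\la\varphi_N(L,\la_n)$, which is not used in the paper. What the paper's route buys is that the possibly small interior sines $\sin(\sqrt{\la_n}\,\omega_{j_0}^*)$, $j_0<N$, sit identically in numerator and denominator and cancel algebraically, so no branch--by--branch cancellation argument is needed. Your step ``show the $j<N$ terms are $o(1/\la_n)$'' is exactly the paper's assertion $\sum_{j}\|\varphi_j\|^2=\|\varphi_N\|^2\,[1]$, which is dispatched there simply by comparing the successive powers of $\nu$; the delicate extraction of cancellations you anticipate is not carried out explicitly in the paper.
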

\begin{proof} By the change of variables $X=\o_j(x)$, one has
\begin{align*}
&\int_{\Omega_j}\xi_j^2(x)\sin^2(\sqrt{\la}\o_j(x))\rho_j(x)dx=
\int_{\Omega_j}\xi_j^2(x)\cos^2(\sqrt{\la}\o_j(x))\rho_j(x)dx=\dfrac{\o_j^*}{2}[1], \\
&\int_{\Omega_j}\xi_j^2(x)\sin(\sqrt{\la}\o_j(x))\cos(\sqrt{\la}\o_j(x))\rho_j(x)dx=
\frac{\sin^2(\sqrt{\lambda}\o_j^*)}{2\sqrt{\lambda}}[1],~ j=0,...,N,
\end{align*}
where $[1]=1+\mathcal{O}\(\frac{1}{\sqrt{\lambda}}\)$, the quantities
$\xi_j$, $\gamma$ and $\o_j$ are respectively given in \eqref{eqss:21} and  \eqref{eqss:22}. From this
together with \eqref{eqss:26} (for $j=0$) and \eqref{eqss:27},
$$ \left\|\varphi_0\right\|^2_{L^{2}_{\r_0}}=\xi_0^2(\ell_0)
\dfrac{\o_0^*}{2\lambda}[1] \hbox{ and }
\left\|\varphi_1\right\|^2_{L^{2}_{\r_1}}= \(M_{1}\xi_0^*\xi_1(\ell_1)
\sin(\nu\o_0^*)\)^2\dfrac{\o_1^*}{2}[1].$$
Similarly by \eqref{eqss:23}, we get
$$ \left\|\varphi_j\right\|^2_{L^{2}_{\r_j}}=
\(\sqrt{\lambda}^{j-1}\Upsilon_{j-1}\xi_j(\ell_j)\prod_{k=1}^{j}M_{k}\prod_{i=0}^{j-1}
{\sin(\sqrt{\lambda}\o_i^*)}\)^2\dfrac{\o_j^*}{2}[1],~j=2,...,N,$$
where $\Upsilon_j$ are defined by \eqref{eqss:21}.
Thus, by the above asymptotes,
\be \label{ja1}\sum_{j=0}^{N}\left\|\varphi_j\right\|^2_{L^{2}_{\r_j}}=
\left\|\varphi_N\right\|^2_{L^{2}_{\r_N}}[1]=
\(\sqrt{\lambda}^{N-1}\Upsilon_{N-1}\xi_N(\ell_N)\prod_{k=1}^{N}M_{k}\prod_{i=0}^{N-1}
{\sin(\sqrt{\lambda}\o_i^*)}\)^2\dfrac{\o_N^*}{2}[1].\ee
Again by \eqref{eqss:23}-\eqref{eqss:24} and  \eqref{eqss:26}-\eqref{eqss:27}, it follows
\be \sum_{j=1}^{N}M_{j}\varphi_j^2(\ell_j)=\varphi_N^2(\ell_N)[1]=
\(\sqrt{\lambda}^{N-2}\Upsilon_{N-1}\prod_{k=1}^{N}M_{k}\prod_{i=0}^{N-1}
{\sin(\sqrt{\lambda}\o_i^*)}\)^2[1],\ee
and \be \label{ja2}{(-1)^{N}\s_{N}\varphi_N'(L)}= \(\sqrt{\la}^N\Upsilon_{N-1}\xi_N(\ell_N)\prod_{k=1}^{N}M_{k}\prod_{k=0}^{j-1}
{\sin(\sqrt{\lambda}\o_k^*)}\)\dfrac{\cos(\sqrt{\lambda}\o_N^*)}{\xi_N(L)}[1].\ee
Thus, by \eqref{eqss:30}, \eqref{ja1}-\eqref{ja2}, one gets
$$ \dfrac{\left\|{\Phi}_n\right\|_{\mathcal{H}}^2}{\left|\s_{N}(L)\varphi_{N}'(L, \la_n)\right|^2}
=\dfrac{\left\|\varphi_N(x, \la_n)\right\|_{L^{2}_{\r_N}}}
{\left|{\s_{N}(L)\varphi_{N}'}(L, \la_n)\right|^2}[1]=\dfrac{{\o_N^*\xi_N^2(L)}}
{{2}{\lambda_n}\cos^2(\sqrt{\lambda_n}\o_N^*)}[1].$$
Or equivalantly (by Theorem \ref{Weyl}), \be \label{ja3} \dfrac{\left\|{\Phi}_n\right\|_{\mathcal{H}}^2}{\left|\s_{N}(L)\varphi_{N}'(L, \la_n^j)\right|^2}
=\dfrac{{\o_N^*\xi_N^2(L)}}
{{2}{\lambda_n^j}\(1-\sin^2(\sqrt{\lambda_n^j}\o_N^*)\)}[1],~j=0
   ,...,N.\ee
It is easy to see from \eqref{Manssoura}-\eqref{eqsss:47} that
$$\left|\sin\(\sqrt{\lambda_n^j}\o_N^*\)\right|\leq\dfrac{C_j}{n},~j=0
   ,...,N,$$
for some constants $C_j>0$. Therefore, from this and \eqref{ja3} together with the Weyl's formula \eqref{Eqsss:29}, we get
the equivalence \eqref{eqsssV4kais:31}. The proof is complete.
\end{proof}
\section{Controllability}\label{Sec4}
\setcounter{equation}{0}
In this section, we prove our main results, namely the null controllability of System
\eqref{eqs:1}-\eqref{eqs:3}, and then, the exact controllability of the Schr\"{o}dinger model \eqref{Scontrolanis}.
\subsection{Null controllability of the heat model \eqref{eqs:1}-\eqref{eqs:3}}
In this subsection, we prove Theorem \ref{MAIN}. We do it by reducing the control problem to
problem of moments. Then, we will solve this problem of moments
using the theory developed in \cite{RussellF, RussellF1}. To this end, let us consider the
so-called adjoint problem, that is,
\begin{equation}
\label{eqref4s4}
\left\{
\begin{array}{ll}
\(\r_{j}(x)\partial_t \hat u_j+\partial_x\(\s_{j}(x)\partial_x \hat u_j\)-q_{j}(x)\hat u_j\)(t,x)=0,& t>0,~ x\in {\O}_j,~j = 0,...,N, \\
\hat u_{j-1}(t, \ell_j)=\hat z_j(t)=\hat u_{j}(t, \ell_j),&t>0,~j=1,...,N, \\
\(\s_{j-1}(\ell_j)\partial_x \hat u_{j-1}-\s_{j}(\ell_j)\partial_x \hat u_j\)(t,\ell_j)= M_j\partial_t \hat z_{j}(t),&t>0,~j=1,...,N,  \\
\hat u_{0}(t,0)=0,~\hat u_{N}(t,L)=0 & t>0,
\end{array}
\right.
\end{equation} with final data at $t = T>0$ given by
\begin{eqnarray}
\label{eqref4s5}
\left\{
\begin{array}{ll}
  \hat u_j(T,x)=\hat u^{T}_j,~x\in\O_j,&j=0,...,N, \\
  \hat z_j(T)=\hat z^{T}_j,&j = 1, ...,N.
\end{array}
 \right.
\end{eqnarray}
By letting $\widehat{U}= \(\(\hat u_j\)_{j=0}^{N},\(\hat z_j\)_{j=1}^{N}\)^{\top}$, the above problem can be written
as \begin{equation*}
\partial_t\widehat{U}(t)=\mathcal{A}\widehat{U}(t),~~
\widehat{U}(T)=\widehat{U}^{T},~~t\in\(0,\infty\),
\end{equation*}
where $\mathcal{A}$ is defined in \eqref{eqss:1} and
${\widehat{U}}^T=\(\(\hat u^{T}_j\)_{j=0}^{N},\(\hat z^{T}_j\)_{j=1}^{N}\)^{\top}$.
Then, we have the following characterization of the null-controllability property.
\begin{lemma}
System \eqref{eqs:1}-\eqref{eqs:3} is null-controllable in time $T > 0$, if and only if,
for any initial data ${U}^0=\(\(u^{0}_j\)_{j=0}^{N},\(z^{0}_j\)_{j=1}^{N}\)^{\top}\in
\mathcal{H}$, there exists a control function
$h(t)\in H^{1}(0, T ),$ such that, for any
${\widehat{U}}^T=\(\(\hat u^{T}_j\)_{j=0}^{N},\(\hat z^{T}_j\)_{j=1}^{N}\)^{\top}\in
\mathcal{H}$
\begin{equation} \left\langle U^0,\(\(\hat u_j(0,x)\)_{j=0}^{N},\(\hat z_j(0)\)_{j=1}^{N}\)^{\top}\right\rangle_{\mathcal{H}}=
\s_{N}\(L\)\int_{0}^{T}h(t)\partial_x\hat{u}_N(t,L)dt\label{eqref4s6}\end{equation}
where $\widehat{U}= \(\(\hat u_j\)_{j=0}^{N},\(\hat z_j\)_{j=1}^{N}\)^{\top}$
is the solution of the adjoint problem \eqref{eqref4s4}-\eqref{eqref4s5}.
\end{lemma}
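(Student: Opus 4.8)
The plan is to prove the lemma by the classical duality (transposition) argument: we derive a \emph{fundamental identity} relating any controlled trajectory $U$ of \eqref{eqs:1}--\eqref{eqs:3} to any trajectory $\widehat U$ of the adjoint system \eqref{eqref4s4}--\eqref{eqref4s5}, and then read off null controllability from the nondegeneracy of the inner product \eqref{eqs:6}. First I would establish the identity for smooth data (say $U^0\in\mathcal D(\mathcal A)$, $h\in H^2(0,T)$ and $\widehat U^T\in\mathcal D(\mathcal A)$), where by the well-posedness results of Subsection \ref{Sec4-1b} and parabolic smoothing both $U$ and $\widehat U$ are classical solutions with well-defined $x$-traces and transmission traces, and afterwards recover the general case $U^0\in\mathcal H$, $h\in H^1(0,T)$, $\widehat U^T\in\mathcal H$ by density of $\mathcal D(\mathcal A)$ in $\mathcal H$ and of $H^2(0,T)$ in $H^1(0,T)$, using the continuity of the solution maps.

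For the fundamental identity I would multiply the $j$-th equation of \eqref{eqs:1} by $\hat u_j$, integrate over $\Omega_j\times(0,T)$, sum over $j=0,\dots,N$, add $\sum_{j=1}^N M_j\int_0^T\partial_t z_j\,\hat z_j\,dt$, and integrate by parts in $x$ and in $t$. In $x$, the interior contributions $\int_{\Omega_j}\sigma_j\partial_x u_j\,\partial_x\hat u_j$ and $\int_{\Omega_j}q_j u_j\hat u_j$ cancel against those coming from testing the adjoint equation \eqref{eqref4s4} against $u_j$; this is exactly the self-adjointness of $\mathcal A$ from Lemma \ref{operator}, and it reduces the computation to the identity $\frac{d}{dt}\int_{\Omega_j}\rho_j u_j\hat u_j\,dx=\bigl(\sigma_j\partial_x u_j\,\hat u_j-\sigma_j\partial_x\hat u_j\,u_j\bigr)(t,\ell_{j+1})-\bigl(\sigma_j\partial_x u_j\,\hat u_j-\sigma_j\partial_x\hat u_j\,u_j\bigr)(t,\ell_j)$. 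Summing telescopes: at each interior node $\ell_j$ the continuity conditions $u_{j-1}(t,\ell_j)=u_j(t,\ell_j)=z_j(t)$, $\hat u_{j-1}(t,\ell_j)=\hat u_j(t,\ell_j)=\hat z_j(t)$ together with the jump conditions of \eqref{eqs:1} and \eqref{eqref4s4} turn the node contribution into $-M_j\frac{d}{dt}(z_j\hat z_j)$, which is absorbed into the mass terms on the left; at $x=0$ everything vanishes since $u_0(t,0)=\hat u_0(t,0)=0$; at $x=L$ we have $\hat u_N(t,L)=0$ and $u_N(t,L)=h(t)$, leaving only $-\sigma_N(L)\,h(t)\,\partial_x\hat u_N(t,L)$. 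Recognizing the left-hand side as $\frac{d}{dt}\langle U(t),\widehat U(t)\rangle_{\mathcal H}$ and integrating over $(0,T)$ with $U(0)=U^0$, $\widehat U(T)=\widehat U^T$, we obtain
$$\Big\langle U^0,\big((\hat u_j(0,x))_{j=0}^N,(\hat z_j(0))_{j=1}^N\big)^\top\Big\rangle_{\mathcal H}-\big\langle U(T),\widehat U^T\big\rangle_{\mathcal H}=\sigma_N(L)\int_0^T h(t)\,\partial_x\hat u_N(t,L)\,dt.$$

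From this identity both implications are immediate. If \eqref{eqs:1}--\eqref{eqs:3} is null controllable, then for a given $U^0\in\mathcal H$ we choose $h\in H^1(0,T)$ with $U(T)=0$, and the displayed identity becomes precisely \eqref{eqref4s6} for every $\widehat U^T\in\mathcal H$. Conversely, if for some $h\in H^1(0,T)$ the identity \eqref{eqref4s6} holds for all $\widehat U^T\in\mathcal H$, then the displayed identity forces $\langle U(T),\widehat U^T\rangle_{\mathcal H}=0$ for every $\widehat U^T\in\mathcal H$; since $\langle\cdot,\cdot\rangle_{\mathcal H}$ is a genuine Hilbert inner product, $U(T)=0$, and as $U^0$ was arbitrary the system is null controllable at time $T$.

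The routine part is the space-time integration by parts; the delicate point — and the one I expect to be the main obstacle — is justifying it at the stated low regularity, in particular giving $\partial_x\hat u_N(\cdot,L)$ a meaning in $L^2(0,T)$ for merely $\widehat U^T\in\mathcal H$ and $h\in H^1(0,T)$. I would handle this by the density argument above, combined with a \emph{hidden regularity} bound $\|\partial_x\hat u_N(\cdot,L)\|_{L^2(0,T)}\le C\,\|\widehat U^T\|_{\mathcal H}$, which follows from the spectral expansion of $\widehat U$ in the eigenfunctions $\Phi_n$ of Theorem \ref{simple}, the trace equivalence $\|\Phi_n\|_{\mathcal H}/|\sigma_N(L)\Phi_n'(L)|\sim \sqrt{\o_N^*/2}\,\gamma\,\xi_N(L)/(n\pi)$ of Proposition \ref{Lem1asymptotics210v4equi}, and the uniform spectral gap of Theorem \ref{Weyl} (via an Ingham-type estimate); this makes the right-hand side of \eqref{eqref4s6} continuous in $\widehat U^T$ and legitimizes passing to the limit.
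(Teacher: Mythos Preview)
Your proposal is correct and follows essentially the same duality approach as the paper: derive the identity $\langle U^0,\widehat U(0)\rangle_{\mathcal H}-\langle U(T),\widehat U^T\rangle_{\mathcal H}=\sigma_N(L)\int_0^T h(t)\,\partial_x\hat u_N(t,L)\,dt$ via integration by parts, then read off both implications from nondegeneracy of the $\mathcal H$-inner product. Your treatment of the regularity issue (density plus hidden regularity from Proposition~\ref{Lem1asymptotics210v4equi} and Theorem~\ref{Weyl}) is in fact more careful than the paper's own proof, which performs the computation formally without explicitly justifying the boundary trace.
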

\begin{proof} We proceed as in the classical duality approach. We first multiply the
$N+1$ equations in \eqref{eqs:1} by
$\(\hat u_j\)_{j=0}^{N}$, to obtain
\begin{equation*} \sum_{j=0}^{N}\int_{\ell_j}^{\ell_{j+1}}\int_{0}^{T}\partial_t u_j\hat u_jdt \r_{j}(x)dx
=\int_{0}^{T}\sum_{j=0}^{N}\int_{\ell_j}^{\ell_{j+1}}\big(\partial_x
\(\s_{j}(x)\partial_x u_j\)-q_{j}(x)u_j\big)\hat u_jdt dx,\end{equation*}
where $\widehat{U}= \(\(\hat u_j\)_{j=0}^{N},\(\hat z_j\)_{j=1}^{N}\)^{\top}$
is the solution of Problem \eqref{eqref4s4}-\eqref{eqref4s5}.
Integration by parts leads to \begin{alignat}{3}\label{eqref4s7}
\sum_{j=0}^{N}\int_{\ell_j}^{\ell_{j+1}}u_j\hat u_j\big|_{t=0}^{t=T}\r_{j}(x)dx=
\int_{0}^{T}\sum_{j=0}^{N} \(\s_{j}(x)\partial_x u_j\hat u_j\big|_{x=\ell_j}^{x=\ell_{j+1}}-
\s_{j}(x)\partial_x \hat u_j u_j\big|_{x=\ell_j}^{x=\ell_{j+1}}\)dt.
                              \end{alignat}
Since $\displaystyle\sum_{j=0}^{N}\s_{j}(x)\partial_x u_j\hat
u_j\big|_{x=\ell_j}^{x=\ell_{j+1}}=-\sum_{j=1}^{N}M_j\partial_t z_{j}\hat z_{j}(t)$, and $$
\sum_{j=0}^{N}\s_{j}(x)\partial_x \hat u_j
u_j\big|_{x=\ell_j}^{x=\ell_{j+1}}= \sum_{j=1}^{N}M_j\partial_t \hat z_{j}z_{j}(t)+
\s_{N}(L)h(t)\partial_x \hat u_j(t,L),$$
then by \eqref{eqref4s7}, one gets $$
\sum_{j=0}^{N}\int_{\ell_j}^{\ell_{j+1}}u_j\hat u_j\big|_{t=0}^{t=T}\r_{j}(x)dx +
\sum_{j=1}^{N}M_jz_j\hat z_j(t)\big|_{t=0}^{t=T}=-\int_{0}^{T}\s_{N}(L)
\partial_x \hat u_Nu_N(t,L)dt.$$
Equivalently,  \begin{equation} \label{eqref4s8}
\left\langle U(T) , \widehat{U}^T\right\rangle_{\mathcal{H}}=\left\langle U^0,\widehat{U}(0)\right\rangle_{\mathcal{H}}-\int_{0}^{T}\s_{N}(L)
\partial_x \hat u_Nu_N(t,L)dt,\end{equation}
where $$U(T):=\(\(\ u_j(T,x)\)_{j=0}^{N},\(z_j(T)\)_{j=1}^{N}\)^{\top}\hbox{ and }
\widehat{U}(0):=\(\(\hat u_j(0,x)\)_{j=0}^{N},\(\hat z_j(0)\)_{j=1}^{N}\)^{\top}.$$
 Now, we assume that \eqref{eqref4s6}
 holds. Then by \eqref{eqref4s8}, one has \begin{eqnarray} \label{eqref4s9} \left\{
\begin{array}{ll}
  u_j(T,x)=0,&\forall x\in\O_j,~\forall
j\in\{0,...,N\}, \\
  z_j(T)=0,&\forall j\in\{1,...,N\}.
\end{array}
 \right.
\end{eqnarray} Thus, the solution $U$ is controllable to zero and $h(t)$ is
a control of Problem \eqref{eqs:1}-\eqref{eqs:3}. Conversely, if $h(t)$ is a control of
Problem \eqref{eqs:1}-\eqref{eqs:3} for which \eqref{eqref4s9} holds. Thus by \eqref{eqref4s8},
we get \eqref{eqref4s6}. The Lemma is proved.
\end{proof}

We are now ready to reduce the control problem \eqref{eqs:1}-\eqref{eqs:3}
to a moment problem. Let $\({\Phi}_n\)_{n\in\N^*}$ be
 the sequence of eigenfunctions
 of Problem
$\(\mathcal{P}_N\)$ \eqref{eqs:8} constructed in
Theorem \ref{simple}, then any  terminal data
 $\widehat U^T:=\(\(\hat u^{T}_j\)_{j=0}^N,\(\hat z^{T}_j\)_{j=1}^{N}
\)^{\top}\in\mathcal{H}$ for the adjoint problem \eqref{eqref4s4}-\eqref{eqref4s5} can be written as \begin{eqnarray*}
\widehat U^{T}&=&\sum_{n\in\mathbb{N}^{*}}\dfrac{\langle \widehat U^{T}_n, {\Phi}_n
\rangle_{\mathcal{H}}}{\|{\Phi}_n\|^{2}}{{\Phi}}_{n}(x),\end{eqnarray*}
where the Fourier coefficients $\widehat U_{n}^{T}=
\dfrac{\langle \widehat U^{T}_n, \Phi_{n}
\rangle_{\mathcal{H}}}{\|\Phi_{n}\|^{2}}$, $n\in\mathbb{N}^{*}$,
belong to $\ell^2(\N)$. Hence, the solution $\widehat{U}(t,x)=\(\(\hat u_{j}(t,x)\)_{j=0}^{N},
\(\hat z_{j}(t,x)\)_{j=1}^{N}\)^{\top}$
of \eqref{eqref4s4}-\eqref{eqref4s5} is given
by   $$\widehat{U}(t,x)=\sum_{n\in\mathbb{N}^{*}}\widehat U_{n}^{T}
 e^{-\la_{n}(T-t)}\Phi_n(x),$$
and we have \begin{eqnarray*} \partial_x\widehat{U}(t,L)=\partial_x\hat u_{N}(t,L)=
\sum_{n\in\mathbb{N}^{*}} \widehat U_{n}^{T} e^{-\la_{n}(T-t)}\varphi_{N}'(L, \la_n),\end{eqnarray*}
where
${\varphi}_N(x, \la)$ is defined by \eqref{eqss:15}. Using this fact in
\eqref{eqref4s6}, on gets the following lemma.
\begin{lemma}Problem \eqref{eqs:1}-\eqref{eqs:3} is null-controllable
in time $T > 0$ if and
only if for any \begin{eqnarray*}
U^{0}&=&\sum_{n\in\mathbb{N}^{*}}\dfrac{\langle U^{0}, \Phi_{n}
\rangle_{\mathcal{H}}}{\|\Phi_{n}\|^{2}}{\Phi}_{n}\in \mathcal{H},\end{eqnarray*} there
exists a function $h(t)\in H^{1}(0,T)$ such that \begin{equation} e^{-\la_{n}T}\langle U^{0}, \Phi_{n}
\rangle_{\mathcal{H}}= \s_{N}(L)\varphi_{N}'(L, \la_n)
\int_{0}^{T}h(T-t)e^{-\la_{n}t}dt,~\forall n\in\mathbb{N}^{*}.\label{eqref4s10}\end{equation}
\end{lemma}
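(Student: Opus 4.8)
The plan is to read off the lemma directly from the duality identity \eqref{eqref4s6} of the preceding lemma, by inserting the spectral expansion of the adjoint state. As a preparatory step, note that since $\mathcal A$ is positive, self-adjoint and has compact resolvent (Lemma \ref{operator}), the family $(\Phi_n)_{n\in\N^*}$ is an orthogonal basis of $\mathcal H$ (Theorem \ref{simple}), and the backward problem \eqref{eqref4s4}-\eqref{eqref4s5} with terminal datum $\widehat U^T=\sum_n\widehat U_n^T\Phi_n$, where $\widehat U_n^T=\langle\widehat U^T,\Phi_n\rangle_{\mathcal H}/\|\Phi_n\|^2$, has solution $\widehat U(t,\cdot)=e^{-(T-t)\mathcal A}\widehat U^T=\sum_n\widehat U_n^T e^{-\la_n(T-t)}\Phi_n$; reading off the $\O_N$-component and differentiating gives $\partial_x\hat u_N(t,L)=\sum_n\widehat U_n^T e^{-\la_n(T-t)}\varphi_N'(L,\la_n)$. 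For the \emph{necessity} part I would simply test \eqref{eqref4s6} against the single datum $\widehat U^T=\Phi_m$: the series collapse to one term, $\widehat U(0)=e^{-\la_m T}\Phi_m$ and $\partial_x\hat u_N(t,L)=e^{-\la_m(T-t)}\varphi_N'(L,\la_m)$, so \eqref{eqref4s6} reduces, after the substitution $s=T-t$ in the time integral, to exactly \eqref{eqref4s10} for $n=m$; since $m$ is arbitrary this gives \eqref{eqref4s10} for all $n$.

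For the \emph{sufficiency} part I would argue by continuity and density. Consider the linear functional
\begin{equation*}
\mathcal G(\widehat U^T):=\langle U^0,\widehat U(0)\rangle_{\mathcal H}-\s_N(L)\int_0^T h(t)\,\partial_x\hat u_N(t,L)\,dt,\qquad \widehat U^T\in\mathcal H;
\end{equation*}
by \eqref{eqref4s8} and $u_N(t,L)=h(t)$ one has $\mathcal G(\widehat U^T)=\langle U(T),\widehat U^T\rangle_{\mathcal H}$, where $U$ is the solution of \eqref{eqs:1}-\eqref{eqs:3} associated with $h$, so $\mathcal G\equiv0$ is equivalent to $U(T)=0$, i.e. to \eqref{eqref4s9}. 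Using the preparatory expansion and the change of variable $s=T-t$,
\begin{equation*}
\mathcal G(\widehat U^T)=\sum_{n\in\N^*}\widehat U_n^T\left(e^{-\la_n T}\langle U^0,\Phi_n\rangle_{\mathcal H}-\s_N(L)\varphi_N'(L,\la_n)\int_0^T h(T-s)e^{-\la_n s}\,ds\right),
\end{equation*}
and the hypothesis \eqref{eqref4s10} is precisely the statement that every bracket vanishes. Hence $\mathcal G$ annihilates the dense subspace $\mathrm{span}\{\Phi_n:n\in\N^*\}$, and to conclude $\mathcal G\equiv0$ it is enough to check that $\mathcal G$ is a bounded functional on $\mathcal H$, which also legitimizes the interchange of $\sum_n$ and $\int_0^T$ above.

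This boundedness estimate is the one technical point, and it is exactly where the regularity $h\in H^1(0,T)$ is used; I expect it to be the main, though routine, obstacle. The first term is harmless: $\langle U^0,\widehat U(0)\rangle_{\mathcal H}=\langle e^{-T\mathcal A}U^0,\widehat U^T\rangle_{\mathcal H}$ and $e^{-T\mathcal A}$ is a contraction. For the second term I would integrate by parts once in the time integral and use $H^1(0,T)\hookrightarrow C[0,T]$ to obtain $\bigl|\int_0^T h(T-s)e^{-\la_n s}\,ds\bigr|\le C_h\la_n^{-1}$ with $C_h$ depending only on $\|h\|_{H^1(0,T)}$; then, writing $\widehat U_n^T=c_n/\|\Phi_n\|_{\mathcal H}$ with $(c_n)_n\in\ell^2$ and $\|(c_n)\|_{\ell^2}=\|\widehat U^T\|_{\mathcal H}$, the Cauchy--Schwarz inequality gives
\begin{equation*}
\Bigl|\s_N(L)\int_0^T h(t)\,\partial_x\hat u_N(t,L)\,dt\Bigr|\le C_h\,\|\widehat U^T\|_{\mathcal H}\Bigl(\sum_{n\in\N^*}\frac{|\varphi_N'(L,\la_n)|^2}{\|\Phi_n\|_{\mathcal H}^2\,\la_n^2}\Bigr)^{1/2}.
\end{equation*}
Finally the series converges: by Proposition \ref{Lem1asymptotics210v4equi} one has $|\s_N(L)\varphi_N'(L,\la_n)|\asymp n\,\|\Phi_n\|_{\mathcal H}$, and by the Weyl formula \eqref{Eqsss:29} one has $\la_n\asymp n^2$, so the general term behaves like $n^2/n^4=n^{-2}$. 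This shows $\mathcal G$ is bounded on $\mathcal H$, hence $\mathcal G\equiv0$, i.e. $U(T)=0$, which proves null controllability and completes the equivalence.
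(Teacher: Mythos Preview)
Your proof is correct and follows exactly the paper's route: derive the moment problem from the duality identity \eqref{eqref4s6} by inserting the spectral expansion of the adjoint state. In fact the paper gives no separate proof of this lemma at all; it merely writes ``Using this fact in \eqref{eqref4s6}, one gets the following lemma'', so your argument supplies the details the paper omits.

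One simplification: the boundedness estimate you carry out at the end, while correct, is not needed. Since each $\Phi_n\in\mathcal D(\mathcal A)$, the identity \eqref{eqref4s8} applied with $\widehat U^T=\Phi_n$ already gives $\langle U(T),\Phi_n\rangle_{\mathcal H}=\mathcal G(\Phi_n)$, and the moment hypothesis \eqref{eqref4s10} makes each of these vanish. As $(\Phi_n)_{n\in\N^*}$ is an orthogonal basis of $\mathcal H$, this forces $U(T)=0$ directly, with no density or continuity extension required.
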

We are now in a position to prove Theorem \ref{MAIN}.
\begin{proof}
From the Weyl's formula \eqref{Eqsss:29}, $$\sum_{n\in
\mathbb{N}^{*}}\frac{1}{\la_{n}}<\infty,$$ and then by Theorem \ref{Weyl}, we deduce that there exists a
biorthogonal sequence $(\Theta_{n}(t))_{n\in \mathbb{N}^{*}}$ to the
family of exponential functions $(e^{-\la_{n}t})_{n\in
\mathbb{N}^{*}}$ (see \cite{RussellF, RussellF1}) such that
$$\int_{0}^{T}\Theta_{n}(t)e^{{-\la_{m}t}}dt=\delta_{nm}=\left\{
\begin{array}{l}
                                                      1,~~if~~n=m, \\
                                                      0,~~if~~n \neq m.
                                                    \end{array}
 \right.
$$
Again by \eqref{Eqsss:29} together with the
general theory developed in \cite{RussellF}, it follows that
there exists constants $C_j(T)>0~(\hbox{depending on}~T)$ and $\widehat C_j>0$
such that for any $j\in\N$,
\begin{equation} \left\|\Theta_{n}(t)\right\|_{H^{j}(0,T)}\leq C_j(T)
e^{\widehat C_j n},~n \in \mathbb{N}^{*}.\label{eqref4s11}\end{equation}
Let $\({\Phi}_n\)_{n\in\N^*}$ be the sequence of eigenfunctions
 of Problem
$\(\mathcal{P}_N\)$ \eqref{eqs:8} constructed in
Theorem \ref{simple}, then
\begin{equation*} {\Phi}_n(L)= \varphi_{N}'(L, \la_n)\not=0,~~\forall n\in\N^*,\end{equation*}
where
${\varphi}_N(x, \la)$ is defined by \eqref{eqss:15}. Indeed,
 if ${\Phi}_n'(L)=0$, then the restriction
${\varphi}_N(x, \la)$ of ${\Phi}_n$ to $\O_N$ satisfies,
${\varphi}_N(L, \la_n)= {\varphi}_N'(L, \la_n)=0$. Thus,
${\varphi}_N(x, \la_n)=0$, a contradiction. Therefore from the above,
we infer that an explicit formal solution of
the moment problem  \eqref{eqref4s10} is given by
\begin{equation} h(T-t)=k(t)= \sum_{n\in
\mathbb{N}^{*}}\dfrac{\langle U^{0}, \Phi_{n}
\rangle_{\mathcal{H}}}{\s_{N}{\varphi}_N'(L, \la_n)}e^{-\la_{n}T}\Theta_{n}(t).\label{eqref4s12}\end{equation}
As a consequence, the task consists in showing that
the series $k(t)$ convergence in $H^1(0,T)$.
 From \eqref{eqref4s11} and \eqref{eqref4s12}, we obtain by Cauchy-Schwarz inequality that
\begin{align}
  \left\|h \right\|_{H^1(0,T)} & \leq C_1(T)\(\sum_{n\in
\mathbb{N}^{*}}\dfrac{{\left\|{\Phi}_n\right\|_{\mathcal{H}}}}
{\left|\s_{N}(L)\varphi_{N}'(L,\la_n)\right|}
e^{-\la_{n}T+\widehat C_1n}\)\left\|U^{0}\right\|_{\mathcal{H}}.
  \label{abedrahman}
\end{align}
Therefore from Proposition \ref{Lem1asymptotics210v4equi}, the Weyl's formula \eqref{Eqsss:29}
and \eqref{abedrahman}, it follows
\begin{align*}
  \nonumber\left\|h \right\|_{H^1(0,T)} & \leq C(T)\sqrt{\dfrac{\o_N^*}{{2}}}\xi_N(L)\sum_{n\in
\mathbb{N}^{*}} \dfrac{\gamma e^{-\frac{n\pi T}{\gamma}}}{n\pi}
\left\|U^{0}\right\|_{\mathcal{H}},\end{align*}
for some new constant $C(T)>0,$ where the quantities
$\xi_j,$  $\gamma$ and $\o_j^*$ are respectively given in \eqref{eqss:21} and  \eqref{eqss:22}. This proves the convergence of the series $h(t)$ and finishes the proof
of Theorem \ref{MAIN}.
\end{proof}
\subsection{Exact controllability of the Schr\"{o}dinger model \eqref{Scontrolanis}}\label{sec4-2}
In this subsection, we prove Theorem \ref{hjAnis}.
\begin{proof}
 By means of Lions HUM method (see \cite{Lions}), controllability properties of the Schr\"{o}dinger model \eqref{Scontrolanis} can be reduced to suitable observability inequalities for the adjoint system. As \eqref{Scontrolanis} is reversible in time, we are reduced to the same
system without control. Let $\widehat{U}:=\big(\hat u_0(t,x),\hat u_1(t,x),\hat z(t)\big)^{\top}$ be the unique solution of Problem  \eqref{Scontrolanis} with $h(t)\equiv0$. It is easy to show that   \be\widehat{U}(t,x):=\sum\limits_{n\in \N^*}
c_{n}e^{i{\la}_n t} \widehat\Phi_n\(x\)\in C\([0,T],~ H^{1}_0(0,1) \times \mathbb{C}\),~c_{n}\in\ell^2(\N^*),\label{ali}\ee
where $(\lambda_{n})_{n\N^*}$ are the eigenvalues of the spectral problem \begin{align}\label{eqs:8dfavdonin}
                       &-\p_j''=\la\p_j,~x\in \(\ell_j,\ell_{j+1}\),~j=0,1,  \\
                       &\p_{0}(\ell_{1})=\p_{1}(\ell_{1}),
                       ~\(\p_{0}'-\p_{1}'\)(\ell_{1})=\la \p_{0}(\ell_{1}), \label{eqs:8dfavdonin0}\\
                       &\p_0(\ell_0)=\p_0(0)=0,~ \p_{1}(\ell_{2})=\p_1(1)=0, \label{eqs:8dfavdonin1}
                        \end{align}
and $\(\widehat{\Phi}_n\)_{n\in\N^*}$ are the associated eigenfunctions, which are normalized in the Hilbert space $\mathcal H=\ds\prod_{j=0}^{1} L^{2}(\ell_j,\ell_{j+1})\times \C$ so that $\ds\lim_{n\to\infty}\|\widehat{\Phi}_n\|_{\mathcal{H}}=1$. Consequently, the task now is to
prove following observability inequality: \begin{equation}\label{obghvbds}
\int_{0}^{T} \left|\partial_x{\hat u_0}(t,0) \right|^{2}dt \asymp
\|\widehat{U}(0,x)\|_{H^1_0(0,1) \times \mathbb{C}}^{2},~\forall~T>0.
\end{equation}
To this end, following Lemma \ref{Unqunss} and Proposition \ref{Aymptotics1x}, it easy to see that the problem determined by Equations \eqref{eqs:8dfavdonin}-\eqref{eqs:8dfavdonin0}, and the initial conditions $\p_0(0)= \p_0'(0)-1=0$, has a unique solution \be \label{eqss:15Avdohan0}
 \underline{\varphi}(x,\lambda):=\left\{
\begin{array}{ll}
{\varphi}_0(x,\la) =\dfrac{\sin(\sqrt{\lambda} x)}{\sqrt{\lambda}},~x\in\[0,\ell_{1}\], \\
{\varphi}_1(x,\la) =\dfrac{\sin(\sqrt{\lambda} x)}{\sqrt{\lambda}}-\sin(\sqrt{\lambda} \ell_1)\sin(\sqrt{\lambda}(x-\ell_1)),~~x\in\[\ell_1,1\].
\end{array}\right.\ee
Similarly, the problem determined by Equations \eqref{eqs:8dfavdonin}-\eqref{eqs:8dfavdonin0}, and the initial conditions $\p_0(1)= \p_0'(1)+1=0$, has a unique solution $\underline{\psi}(x,\la)$ :\be \label{eqss:15Avdohan1}
 \underline{\psi}:=\left\{
\begin{array}{ll}
{\psi}_0(x,\la) =\dfrac{\sin(\sqrt{\lambda}(1- x))}{\sqrt{\lambda}}-\sin(\sqrt{\lambda}(1-\ell_1))\sin(\sqrt{\lambda}(\ell_1-x)) ,~x\in\[0,\ell_{1}\], \\
{\psi}_1(x,\la) = \dfrac{\sin(\sqrt{\lambda}(1- x))}{\sqrt{\lambda}},~~x\in\[\ell_1,1\].
\end{array}\right.\ee
Following an argument
similar to that in the proof of Theorem \ref{simple}, we deduce that the eigenfunctions $\({\Phi}_n(x)\)_{n\in\N^*}$ associated with Problem \eqref{eqs:8dfavdonin}-\eqref{eqs:8dfavdonin1} taken the form \begin{equation*}
\({\Phi}_n(x)\)_{n\in\N^*}:=\(\big({\varphi}_j(x,\la_n)\big)_{j=0}^1,
{\varphi}_1(\ell_1,\la_n)\)_{n\in\N^*}^\top,
~x\in\[\ell_j,\ell_{1+1}\],~j=0,1,\end{equation*}
where ${\varphi}_j(x,\la)$ are given by \eqref{eqss:15Avdohan0}. Consequently, the eigenfunctions \be \label{hedieqss:15Avdohan0}
\widehat{\Phi}_n(x)
:=\dfrac{{\Phi}_n(x)}{\|{\Phi}_n\|_{\mathcal{H}}},~\forall n\in\N^*,\ee
can be chosen to constitute an  orthonormal basis of $\mathcal H$, and then, the space $H^1_0(0,1) \times \mathbb{C}$ can be characterized as
\be H^1_0(0,1) \times \mathbb{C}=
\left\{u(x)=\sum\limits_{n\in\N^*}c_n\widehat\Phi_n(x)~:
 ~\|u\|_{H^1_0(0,1) \times \mathbb{C}}^2=\sum\limits_{n\in\N^*}{\la_n}|c_n|^{2}<\infty\right\}.\label{hedieqss:15Avdohan0gogou}\ee
By \eqref{eqss:15Avdohan0} and \eqref{hedieqss:15Avdohan0}, a simple calculation yields \be \label{castrohedieqss:15Avdohan0}|\widehat{\Phi}_n'(0)|
:=\dfrac{|{\Phi}_n'(0)|}{\|{\Phi}_n\|_{\mathcal{H}}}= \sqrt{\dfrac{2}{1-\ell_1}}\dfrac{1}{\left|\sin\(\sqrt{\lambda_n}\ell_1\)\right|}\[1\],\ee
where $\[1\]=1+\mathcal{O}\(\frac{1}{\sqrt{\lambda_n}}\).$
From Theorem \ref{wronskian}, $\dfrac{\varphi_{j}(\ell_1,\la)}{\varphi'_{j}(\ell_1,\la)}=
\dfrac{\psi_{j}(\ell_1,\la)}{\psi'_{j}(\ell_1,\la)},~j=0,1,$ and then by \eqref{eqss:15Avdohan0} and \eqref{eqss:15Avdohan1}, one has
\be \frac{\sin( \sqrt{\lambda_n}\ell_1)}{\cos( \sqrt{\lambda_n}\ell_1)}=\frac{1}{\sqrt{\lambda_n}}[1] ~~\hbox{ and }~~\frac{\sin(\sqrt{\lambda_n}(1-\ell_1))}{\cos(\sqrt{\lambda_n}(1-\ell_1))}=
\frac{1}{\sqrt{\lambda_n}}[1].\label{abdguo}\ee
Let $\left\{\mu_{n}\right\}_{1}^{\infty}=\ds\left
\{\(\frac{n\pi}{{\ell_{1}}}\)^2\right\}_{1}^{\infty}\bigcup\left\{\(\frac{n\pi}{{1-\ell_{1}}}\)^2\right\}_{1}^{\infty}$. Then, under Condition \eqref{brous} together with Theorem \ref{interlcing},
 \be
 0<\la_1<\inf\left\{\(\frac{\pi}{{\ell_{1}}}\)^2, \(\frac{\pi}{{1-\ell_{1}}}\)^2\right\}, ~~ \mu_{n}<\la_{n+1}<\mu_{n+1},~\forall n\in\N^*.
 \label{g35allahguo}
\ee and \be \lambda_n\sim{n^2\pi^2},\label{waveguo}\ee
Following an argument
similar to that in the proof of Theorem \ref{simple}, using \eqref{abdguo}-\eqref{g35allahguo} and \eqref{waveguo}, we deduce that the set of eigenvalues $\{\la_n\}_{n\in\N^*}$ is asymptotically splits into two branches $\{\lambda_{n}^j\}_{n\in\N^*}$, $j=0,1$, such that: \be \sqrt{\lambda_{n}^0}=\frac{n\pi}{\ell_{1}}+ \frac{1}{\ell_{1}n\pi}+\mathcal{O}\(\frac{1}{n^2}\)~\hbox{ and }~\sqrt{\lambda_{n}^1}=\frac{n\pi}{1-\ell_{1}}+ \frac{1}{(1-\ell_{1})n\pi}+\mathcal{O}\(\frac{1}{n^2}\).\label{waveguohedi}\ee
Consequently, \be
{\la_{n+1}}-{\la_{n}}\geq2\min\left\{\dfrac{1}{{\ell_1}^2},\dfrac{1}{{(1-\ell_1)}^2}\right\},\hbox{ as }n\to\infty\label{edwardeqsss:30},
\ee
and since $1=1-\ell_1+\ell_1$, by  \eqref{castrohedieqss:15Avdohan0}, one gets the equivalence \be \label{castro1hedieqss:15Avdohan0}|\widehat{\Phi}_n'(0)|\asymp {n}\asymp
\frac{1}{\left|\sin\(\sqrt{\lambda_n^j}\ell_1\)\right|}
,~j=0,1.\ee
From  \eqref{g35allahguo}-\eqref{waveguo}, we find that the Beurling upper density of the eigenvalues $(\lambda_{n})_{n\in\N^*}$,
$$ D^+\(\la_n\):= \ds \lim_{r\to\infty}
\frac{n^+\(r,\lambda_n\)}{r}=\lim_{n\to\infty}\frac{1}{n\pi}=0,$$
where $n^+\(r ,\lambda_n\)$ denotes the maximum number of terms of the sequence $(\lambda_{n})_{n\in\N^*}$ contained
in an interval of length $r$. Therefore Beurling's Theorem  (e.g., \cite{DagerZuazua}) states that for any $T>0$, the family $\(e^{i\lambda n t}\)_{n\in\N^*}$ forms a Riesz basis in $L^2(0, T)$. Furthermore, for every $T>0,$
\begin{equation*}
\int_{0}^{T}\left|\sum\limits_{n\in\mathbb{\N^*}}\chi_n e^{i{\la}_nt}\right|^{2}dt \asymp\sum\limits_{n\in
\N^*}\left|\chi_{n}\right|^{2},
\end{equation*}
for all sequences of complex numbers $(\chi_n)_{n\in\N^*}$. Let $\chi_{n}=c_{n}\widehat\Phi_n\(0\)$, then by  \eqref{ali} and \eqref{obghvbds}, $$
\int_{0}^{T}|\partial_x{\hat u_0}(t,0)|^{2}dt \asymp\sum\limits_{n\in
\N^*}\left|c_{n}\widehat\Phi_n\(0\)\right|^{2},~\forall ~T>0.
$$ Therfore, from this, \eqref{hedieqss:15Avdohan0gogou} and \eqref{castro1hedieqss:15Avdohan0}, we get the observability inequality \eqref{obghvbds}. The proof is complete.
 \end{proof}
\subsection*{Acknowledgments} The second author wishes to thank Enrique Zuazua for many fruitful discussions on the subject and for suggestions concerning the problem.

\end{document}